\newcommand{\customlabel}[2]{%
   \protected@write \@auxout {}{\string \newlabel {#1}{{#2}{\thepage}{#2}{#1}{}} }%
   \hypertarget{#1}{#2}}
\newtheorem{thmx}{Theorem}
\numberwithin{equation}{subsection}
\newtheorem{theorem}[subsection]{Theorem}
\newtheorem{lemma}[subsection]{Lemma}
\newtheorem{coro}[subsection]{Corollary}
\newtheorem{conjecture}[subsection]{Conjecture}
\newtheorem{prop}[subsection]{Proposition}
\theoremstyle{definition}
\newtheorem{defn}[subsection]{Definition}
\theoremstyle{remark}
\newtheorem{remark}[subsection]{Remark}
\newcommand{\mZ}{\mathbb{Z}}
\newcommand{\mR}{\mathbb{R}}
\newcommand{\mQ}{\mathbb{Q}}
\newcommand{\mF}{\mathbb{F}}
\newcommand{\mC}{\mathbb{C}}
\newcommand{\mP}{\mathbb{P}}
\newcommand{\mE}{\mathbb{E}}
\newcommand{\rra}{\rightarrow}
\newcommand{\ZZ}[1]{\mathbb{Z} / #1 \mathbb{Z}}
\newcommand{\brk}[1]{ \left\lbrace #1 \right\rbrace }
\newcommand{\inv}{^{-1}}
\newcommand{\mif}{\text{ if }}
\newcommand{\tth}{^{\text{th}}}
\newcommand{\iso}{\cong}
\newcommand{\ext}{\hookrightarrow}
\newcommand{\pwr}[1]{\left( #1 \right)}
\newcommand{\lrra}{\longrightarrow}
\newcommand{\wt}[1]{\widetilde{ #1 }}
\def\quotient#1#2{\raise1ex\hbox{$#1$}{\Large/} \lower1ex\hbox{$#2$}}
\DeclareMathOperator{\Aut}{Aut}
\DeclareMathOperator{\id}{id}
\DeclareMathOperator{\GL}{GL}
\DeclareMathOperator{\SL}{SL}
\DeclareMathOperator{\Mat}{M}
\DeclareMathOperator{\Gal}{Gal}
\DeclareMathOperator{\Jac}{Jac}
\DeclareMathOperator{\USp}{USp}
\DeclareMathOperator{\End}{End}
\DeclareMathOperator{\Sp}{Sp}
\DeclareMathOperator{\SU}{SU}
\DeclareMathOperator{\ST}{ST}
\DeclareMathOperator{\Tw}{Tw}
\DeclareMathOperator{\GSp}{GSp}
\DeclareMathOperator{\Lefs}{L}
\DeclareMathOperator{\AST}{AST}
\newcommand\bq{{\mathbb Q}}
\newcommand{\cdef}[1]{{\color{black} \textbf{#1}}}
\newcommand{\abGal}{G_{\mQ}}
\newcommand{\cocyc}{\xi}
\begin{document}
\title{The twisting Sato\--Tate group of the curve $y^2 = x^{8}  - 14x^4 + 1$}

\author{Sonny Arora}
\address{Department of Mathematics, The Pennsylvania State University, University Park, PA 16802}
\email{sza149@psu.edu}
\thanks{The first author was partially supported by National Science Foundation grant DMS-1056703.}

\author{Victoria Cantoral-Farf\'an}
\address{Institut de Math\'ematiques de Jussieu - Paris Rive Gauche (IMJ-PRG)
UP7D - B\^atiment Sophie Germain - 75205 Paris France}
\email{victoria.cantoral-farfan@imj-prg.fr}
\thanks{The second author was supported by a Conacyt fellowship.}

\author{Aaron Landesman}
\address{Department of Mathematics, Stanford University, \mbox{Stanford, CA 94305}}
\email{aaronlandesman@stanford.edu}

\author{Davide Lombardo}
\address{D\'epartement de Math\'ematiques d'Orsay, Universit\'e Paris-Sud, 91400 Orsay (France)}
\email{davide.lombardo@math.u-psud.fr}

\author{Jackson S. Morrow}
\address{Department of Mathematics and Computer Science, Emory University,
Atlanta, GA 30322}
\email{jmorrow4692@gmail.com}

\begin{abstract}

We determine the twisting Sato\--Tate group
of the genus $3$ hyperelliptic curve $y^2 = x^{8}  - 14x^4 + 1$ and show that all possible subgroups of the twisting Sato\--Tate group arise as the Sato\--Tate group of an explicit twist of $y^2 = x^{8}  - 14x^4 + 1$. Furthermore, we prove the generalized Sato\--Tate conjecture for the Jacobians of all $\mathbb Q$-twists of the curve $y^2 = x^{8}  - 14x^4 + 1$.
\end{abstract}
\date{\today}
\maketitle

\section{Introduction}

In this paper, we prove the generalized Sato\--Tate conjecture for all $\mQ$-twists
of the genus $3$ hyperelliptic curve $y^2 = x^8 - 14 x^4 + 1$, which corresponds to an isolated point in the moduli space of genus 3 hyperelliptic curves due to its large geometric automorphism group. In doing so, we exhibit a $\mQ$-twist of this curve with its full automorphism group defined over $\bq$, and to the authors' knowledge, this is the first example of a curve over $\mQ$ with Sato\--Tate distribution, the distribution of the normalized traces of Frobenius, given by the measure
$$\frac{1}{6\pi} \int_a^b \sqrt{4-\left( \frac{t}{3} \right)^2} dt$$
for all intervals $[a,b] \subset [-6,6]$.

We begin in \autoref{subsection:generalized-sato-tate-conjecture} and
\autoref{subsection:progress-on-generalized-sato-tate} with some background on the generalized Sato\--Tate conjecture. 
We then describe the special properties of the genus $3$ curve $y^2 = x^8 - 14 x^4 + 1$ in \autoref{subsection:an-isolated-point},
and conclude the introduction by elaborating on our main results in \autoref{subsection:main-results}.

\subsection{The generalized Sato\--Tate conjecture}
\label{subsection:generalized-sato-tate-conjecture}
Let $A$ be an abelian variety of dimension $g$ defined over $\mQ$. The generalized \cdef{Sato\--Tate conjecture} predicts that the Haar measure of a certain compact group $G$, with $G \subset \USp(2g),$ governs the distribution of the normalized Euler factors $\overline{L}_{p}(A,T)$, as $p$ varies over the primes of good reduction of $A$. The normalized Euler factor at a prime $p$ is the polynomial $\overline{L}_{p}(A,T) = L_{p}(A,T/p^{1/2})$, where $L_{p}(A,T)$ is the $L$-polynomial of $A$ at $p$.
Define $\alpha_i$ so that $L_p(A,T) = \prod_{i=1}^{2g}(1-\alpha_iT)$.
Recall that the $L$-polynomial has the defining property that for each positive integer $n$,
$$\#A(\mF_{p^n}) = \prod_{i=1}^{2g}(1-\alpha_i^n).$$

We now explain what we mean when we say $G$ ``governs" the distribution of the $L$-polynomials. Serre \cite{serre2011lectures} has defined, using $\ell$-adic monodromy groups, a compact real Lie subgroup of $\USp(2g)$ called the \cdef{Sato\--Tate group} of $A$, denoted $\ST(A)$, satisfying the following condition: for each $p$ at which $A$ has good reduction, there exists a conjugacy class of $\ST(A)$ whose characteristic polynomial equals $\overline{L}_{p}(A,T) := \sum_{i=0}^{2g}a_i(A)(p)T^i$.
For a brief summary of this construction, see \cite[Section~2.1]{fite2012sato}. For $i=0,1,\dots , 2g$, let $I_i$ denote the interval $\left[-{2g \choose i},{2g \choose i}\right]$, and consider the map
$$\Phi_i\colon \ST(A) \subset \USp(2g) \lrra I_i \subset \mR$$
that sends an element of $\ST(A)$ to the $i\tth$ coefficient of its characteristic polynomial. Let $\mu(\ST(A))$ denote the Haar measure of $\ST(A)$ and let $\Phi_{i\, *}(\mu(\ST(A)))$ denote the pushforward of the Haar measure on $I_i$. We now state the generalized Sato\--Tate conjecture.\footnote{The generalized Sato\--Tate conjecture naturally extends to abelian varieties defined over number fields $K$, however we shall only be concerned with the case of $K = \mQ$.}

\begin{conjecture}[Generalized Sato\--Tate]\label{conj:ST}
For each $i = 0,1,\dots , 2g$, the $a_i(A)(p)$'s are equidistributed, with respect to increasing size, on $I_i$ with respect to $\Phi_{i\, *}(\mu(\ST(A)))$.
\end{conjecture}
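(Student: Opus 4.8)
The plan is to prove Conjecture~\ref{conj:ST} not in general --- that is out of reach --- but for the abelian varieties studied in this paper, namely $A = \Jac(C')$ as $C'$ ranges over the $\mQ$-twists of $C : y^2 = x^8 - 14x^4 + 1$. The scheme is the familiar one: determine $\ST(A)$ for every such twist, reduce equidistribution to a question about $L$-functions, and then answer that question by recognizing the $L$-functions that occur as products of automorphic $L$-functions for which the Sato--Tate conjecture is already known. Concretely, once $\ST(A)$ has been computed, Conjecture~\ref{conj:ST} is equivalent, by Serre's criterion, to the following statement: for every nontrivial irreducible representation $\rho$ of $\ST(A)$, the $L$-function $L(\rho,s) = \prod_{p \text{ good}} \det\bigl(1 - \rho(\Frob_p)\,p^{-s}\bigr)^{-1}$ extends to a holomorphic and non-vanishing function on the half-plane $\Re(s) \ge 1$.

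The first substantive step is the geometry. One checks that $\Aut(C_{\overline{\mQ}}) \cong C_2 \times S_4$ and that $\Jac(C)$ is $\overline{\mQ}$-isogenous to $E^3$ for a single elliptic curve $E$ without complex multiplication: the normal Klein four-subgroup of $S_4$ splits $H^1(C)$ into three two-dimensional summands, yielding three genus-one quotients of $C$ that the induced $S_3 = S_4/V_4$ action identifies over $\overline{\mQ}$, while the absence of CM on $E$ is forced by the shape of the Sato--Tate measure quoted in the introduction, which is exactly the push-forward of the $\SU(2)$ measure along the diagonal embedding $\SU(2) \hookrightarrow \USp(6)$ built from three copies of the standard representation. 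Consequently $\ST(\Jac(C'))^0$ is this diagonal $\SU(2)$ for every $\mQ$-twist $C'$, and $\ST(\Jac(C'))$ is an extension of a finite group by it; over $\mQ$, $\Jac(C')$ is isogenous to a product of abelian varieties of $\GL_2$-type that are $\overline{\mQ}$-isogenous to powers of $E$, the precise decomposition being dictated by the class in $H^1(G_\mQ, \Aut(C_{\overline{\mQ}}))$ that defines the twist.

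Next comes the group-theoretic bookkeeping. The $\mQ$-twists of $C$ are parametrized by $H^1(G_\mQ, \Aut(C_{\overline{\mQ}}))$; the twisting Sato--Tate group $\ST^{\Tw}(C) \subset \USp(6)$ is generated by the diagonal $\SU(2)$ together with the image of $\Aut(C_{\overline{\mQ}})$ acting on $H^1(C)$; and for each cohomology class the Sato--Tate group of the associated twist is the subgroup of $\ST^{\Tw}(C)$ generated by the diagonal $\SU(2)$ and the finite image of the corresponding twisted Galois representation. Running through the conjugacy classes of subgroups of $\ST^{\Tw}(C)$ that satisfy the Sato--Tate axioms, and exhibiting an explicit twist of $C$ realizing each of them, proves the assertion that all such subgroups arise; the twist whose Sato--Tate group is the full diagonal $\SU(2)$, with trivial component group, is the one having all of $\Aut(C_{\overline{\mQ}})$ defined over $\mQ$, and its Sato--Tate distribution is exactly the measure displayed in the introduction.

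Finally, for each twist $A$ one verifies Serre's criterion. Restricting an irreducible $\rho$ of $\ST(A)$ to the identity component decomposes it into symmetric powers $\operatorname{Sym}^n$ of the standard representation, and decomposing further over the component group expresses $L(\rho,s)$ as a finite product of factors of the shape $L\bigl(\operatorname{Sym}^n f \otimes \chi,\, s\bigr)$, where $f$ runs over the cuspidal newforms attached to the elliptic-curve factors of $A$ (and their base changes to the relevant abelian extensions) and $\chi$ over finite-order Hecke characters. By modularity of elliptic curves over $\mQ$ together with symmetric-power functoriality --- or, what already suffices here, potential automorphy of symmetric powers combined with solvable base change and automorphic induction --- each such factor is holomorphic and non-vanishing on $\Re(s) \ge 1$, with only the trivial representation contributing a simple pole at $s=1$; Serre's criterion therefore holds and Conjecture~\ref{conj:ST} is established for $A = \Jac(C')$. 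I expect the main obstacle to be combinatorial rather than analytic: the analysis is available because $E$ is of $\GL_2$-type over $\mQ$ with no CM, so every $L$-function that arises lies within reach of known automorphy theorems, whereas the genuine work is to enumerate the cohomology classes correctly, match each to an explicit model, and compute the finite monodromy group --- hence $\ST$ --- of every twist.
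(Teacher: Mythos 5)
Your proposal is correct in outline, but it takes a genuinely different route from the paper at the decisive analytic step. Both arguments begin the same way: compute $\Aut(C_{\overline{\mQ}}) \cong S_4 \times \mZ/2\mZ$, establish $\Jac C \sim_{\overline{\mQ}} E^3$ with $E$ non-CM, determine $\ST_{\Tw}(C)$ as the group generated by the diagonal $\SU(2)_3$ and the image of $\Aut(C_{\overline{\mQ}})$ in $\USp(6)$, and parametrize twists by $H^1(G_{\mQ}, \Aut(C_{\overline{\mQ}}))$. (One small circularity to fix: you argue $E$ is non-CM from ``the shape of the Sato--Tate measure,'' but that measure is the thing being proven; the paper instead observes directly that the elliptic quotients $E_1, E_2$ are non-CM.) Where the two diverge is in proving equidistribution. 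You invoke Serre's $L$-function criterion and propose to show that every nontrivial irreducible representation $\rho$ of $\ST(\Jac C^\xi)$ has an $L$-function holomorphic and non-vanishing on $\Re(s) \geq 1$, by restricting $\rho$ to $\SU(2)_3$ to get symmetric powers and then decomposing over the (solvable) component group into factors $L(\Sym^n f \otimes \chi, s)$, accessible via modularity, symmetric-power potential automorphy, solvable base change, and automorphic induction. The paper instead bypasses Serre's criterion entirely: it writes the Frobenius action on $\Jac C^\xi$ explicitly as a tensor $A \otimes \Pi$, where $A$ is the $2\times 2$ Frobenius matrix of $E$ and $\Pi$ is a signed permutation matrix determined by $\xi(\Frob_p)$, expresses each normalized coefficient $a_{i,p}^\xi$ as a polynomial $f_{\xi(\Frob_p),i}(b_p)$ in the normalized trace $b_p$ of $E$, computes moments of the sequences $(a_{i,p}^\xi)_p$ by combining Chebotarev with the joint equidistribution of $b_p$ and the Artin symbol in $K_\xi/\mQ$ (cited as \cite[Theorem 1]{MR3012726}), and verifies these moments match the Haar-measure pushforwards. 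The paper's approach is more elementary and computational --- it treats the refined Sato--Tate theorem for $E$ as a black box and reduces everything to moment bookkeeping --- while your approach essentially re-derives that input from first principles via the automorphic machinery; the two are compatible, since the cited refined Sato--Tate theorem is itself proven via the program you sketch. Your route requires more care than the sketch suggests, though: for the nonabelian component groups ($D_4$, $S_3$, $A_4$, $S_4$), the Artin representations arising are not mere characters, so the reduction to $L(\Sym^n f \otimes \chi, s)$ needs Brauer induction through solvable subgroups, not just a direct twist by Hecke characters.
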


\subsection{Progress on proving the generalized Sato\--Tate conjecture}
\label{subsection:progress-on-generalized-sato-tate}
The classical Sato\--Tate conjecture is concerned with the case that $A$ is an elliptic curve over $\mathbb Q$ without CM. Here, $g=1$ and $\ST(A) \iso \USp(2) \iso \SU(2)$. This form of the conjecture was recently proved in culmination of a project several years in the making; see \cite[Example~8.1.5.2]{serre2011lectures} for a complete list of references. For elliptic curves $E$ with CM over a general number
field $K$, there are two cases. If $E$ has CM defined over $K$, then $\ST(E) \iso \text{U}(1)$ where $\text{U}(1) \ext \SU(2)$ via $u\mapsto \begin{psmallmatrix} u & 0 \\ 0 & \overline{u} \end{psmallmatrix}$. If $E$ has CM not defined over $K$, then $\ST(E)$ is isomorphic to the normalizer of $\text{U}(1)$ in $\SU(2)$. 

The recent work of Fit\'e, Kedlaya, Rotger, and Sutherland has presented an explicit version of the generalized Sato-Tate conjecture for abelian surfaces over number fields $K$. The classification in \cite[Table~8]{fite2012sato} gives a description in the form of explicit equations of curves whose Jacobians realized each of the 52 Sato\--Tate groups that can and do arise for abelian surfaces. The authors of \cite{fite2012sato} also show that only 34 of these arise for Jacobians of genus 2 curves defined over $\mQ$. In \cite{fite2014sato}, the authors prove that 18 of these 34 subgroups can be realized as the Sato\--Tate group of a $\mQ$-twist of either the curve $y^2 = x^5 - x$ or $y^2 = x^6 + 1$ and that the generalized Sato\--Tate conjecture holds for the Jacobians of $\mQ$-twists of the aforementioned curves.

For higher dimensional abelian varieties, there have been a few sporadic results which we briefly recall. Fit\'e and Sutherland \cite{fite2014sato3} provide effective algorithms to compute the traces of Frobenius at primes of good reduction for the curves $y^2  = x^8 + c$ and $y^2 = x^7 - cx$ and determine the Sato\--Tate groups that arise generically and for specific $c\in\mQ^*$. In her thesis \cite{lorenzo2014}, Lorenzo, in joint work with Fit\'e and Sutherland, computed the Sato\--Tate groups and Sato\--Tate distributions for $\mQ$-twists of the Fermat and Klein quartics and proved Conjecture \ref{conj:ST} for the Jacobians of these curves. Another result worth mentioning is \cite{fite2012Satomotives}, where the authors establish a group-theoretic classification of Sato\--Tate groups of self-dual motives of weight 3 with rational coefficients and prescribed Hodge numbers.

In this article, we continue the investigation of Sato\--Tate in genus 3 by computing the Sato\--Tate groups of the Jacobians of $\mQ$-twists of the genus 3 curve $y^2 = x^{8}  - 14x^4 + 1$ and proving the generalized Sato\--Tate conjecture for such twists.

\subsubsection*{Notation} Throughout, let $\overline{\mQ}$ denote a fixed algebraic closure of $\mQ$, and let $G_{\mQ} := \Gal(\overline{\mQ}/\mQ)$ denote the absolute Galois group of $\mQ$. For any algebraic variety $X$ defined over $\mQ$ and any extension $L/\mQ$, we use $X_L$ to denote the algebraic variety obtained over $L$ by the base change of $\mQ\ext L$. For abelian varieties $A$ and $B$ defined over $\mQ$, we write $A\sim_L B$ (resp.~$A\sim B$) to indicate that there is an isogeny between $A$ and $B$ defined over $L$ (resp.~$\mQ$).

\subsection{An isolated point in the moduli space of genus 3 hyperelliptic curves}
\label{subsection:an-isolated-point}
Let $C_0$ denote the hyperelliptic curve $y^2 = x^{8}  - 14x^4 + 1$ defined over $\mQ$.  In this paper, we first compute the twisting Sato\--Tate group $\ST_{\Tw}(C_0)$, which is a compact Lie group with the property that the Sato\--Tate group of the Jacobian of any twist of $C_0$ is isomorphic to a subgroup of $\ST_{\Tw}(C_0)$. 

In \cite[Table~1]{gutierrez2012hyperelliptic}, the authors determine the automorphism groups of genus 3 hyperelliptic curves. The curve $C_0$ is particularly fascinating, because the geometric automorphism group of $C_0$ is isomorphic to $S_4 \times \ZZ{2}$, and, up to $\overline{\mQ}$-isomorphism, $C_0$ is the unique genus 3 hyperelliptic curve with $\#\Aut((C_0)_{\overline{\mQ}}) = 48$. 
Further, we have $\Jac C_0 \sim_{\overline{\mQ}} E^3 $, where $E$ is the non-CM elliptic curve $y^2=x^4-14x^2+1$. \texttt{Magma} computes that $\Aut(C_0) \iso (\ZZ{2})^3$ and that all of the automorphisms of $C_0$ are defined over $\mQ(\zeta_8)$. 

For computational reasons, we shall primarily work with the $\mQ(\zeta_8)$-twist $C$ of $C_0$ defined by $y^2 = x^{8} + 14x^4 + 1$. We compute that $\Aut(C) \iso D_8$ and that all of the automorphisms of $C$ are defined over $\mQ(i)$. In section \ref{section:the-right-twist}, we shall determine a twist $C'$ of $C$ with its full geometric automorphism group defined over $\mQ$. For convenience, we record the defining equations for these three twists:
\begin{align}
	\label{equation:curve-notation}
C_0 \colon y^2 &= x^8 - 14x^4 + 1 
& C \colon y^2 &= x^8 + 14x^4 + 1 
 & C' \colon  & \begin{cases}
x^2+y^2+z^2=0 \\
-2t^2=x^4+y^4+z^4,
\end{cases} 
\end{align}
where the latter curve $C'$ lives in the weighted projective space $\mP_{\mQ}(1,1,1,2)$ with variables $x,y,z$ of weight $1$ and $t$ of weight 2.

\begin{remark}
The family of $\mQ$-twists of $C$ is interesting, not only due to the extremal, geometric automorphism group, but also because they are twists of the modular curve $X_0(48)$; this can be checked with the \texttt{Magma} intrinsic
\begin{verbatim}
SimplifiedModel(SmallModularCurve(48));
\end{verbatim}
By \cite{bruin2015hyperelliptic}, $X_0(48)$ is one of two hyperelliptic modular curves whose hyperelliptic involution does not correspond to the Atkin\--Lehner involution, and via \textit{loc.~cit.~}Theorem~6, the points on $X_0(48)$ give rise to infinitely many isogeny classes, with maximal size, of elliptic curves over quadratic fields.
\end{remark}

\subsection{Main results}
\label{subsection:main-results}
In this paper, we show that all possible
subgroups of the twisting Sato\--Tate group of $C_0$ (see~\autoref{definition:twisting-sato-tate-group})
arise as the Sato\--Tate groups of twists of $C_0$.
We also give explicit equations for a representative
curve with every such subgroup; see Table \ref{Table4}. Using this classification, we prove the generalized Sato\--Tate conjecture for all $\mQ$-twists of $C_0$.

\begin{thmx}\label{thm:main}
Let $\wt{C}$ be a $\mQ$-twist of $C_0$.
\begin{enumerate}
	\item Up to conjugacy in $\USp(6)$, there are exactly 9 distinct possibilities for the Sato\--Tate group $\ST(\Jac \wt{C})$, and an explicit twist $\wt{C}$ realizing each Sato\--Tate group is given in Table \ref{Table4}.
\item For $i = 0,\dots , 6$, the $a_i(\wt{C})(p)$ are equidistributed on $I_i = \left[-{6 \choose i}, {6 \choose i}\right]$ with respect to a measure $\mu(a_i(\wt{C}))$ that is uniquely determined by the Sato\--Tate group $\ST(\Jac (\wt{C}))$.
\item Conjecture \ref{conj:ST} holds for $\wt{C}$.
\end{enumerate}
\end{thmx}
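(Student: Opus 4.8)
The plan is to prove Theorem~\ref{thm:main} in three stages, mirroring its three parts, with the bulk of the work concentrated in part (1).

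\textbf{Step 1: Reduction to the twisting Sato--Tate group and a finite group-theoretic problem.} The first move is to establish the structure of $\ST_{\Tw}(C_0)$ explicitly. Since $\Jac C_0 \sim_{\overline{\mQ}} E^3$ for the non-CM elliptic curve $E\colon y^2 = x^4 - 14x^2 + 1$, the connected component $\ST(\Jac C_0)^\circ$ is the image of $\SU(2)$ embedded diagonally in $\USp(6)$ (via the triple of the standard representation, suitably symplectically arranged). The full group $\ST(\Jac(\wt C))$ for a twist $\wt C$ then sits between $\ST^\circ$ and its normalizer in $\USp(6)$, and the component group is controlled by the Galois action on $\End((\Jac C_0)_{\overline{\mQ}}) \otimes \mR \iso \Mat_3(\mR)$ (together with the cyclotomic-type characters recording how $E$'s quadratic twist data spreads out). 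Concretely, I would identify $\ST_{\Tw}(C_0)$ with an explicit extension of a finite group $H$ (the geometric automorphism/twisting data group, of order dividing $|S_4 \times \ZZ{2}| = 48$, intertwined with the $\pm 1$ from elliptic quadratic twists) by $\SU(2)$, realized inside $\USp(6)$. This is the analogue of the ``twisting Sato--Tate group'' computation in \cite{fite2014sato}, and the key input is that all geometric endomorphisms and automorphisms of $C_0$ are defined over $\mQ(\zeta_8)$, so the relevant Galois image factors through $\Gal(\mQ(\zeta_8)/\mQ) \iso (\ZZ{2})^2$ on the automorphism side, with the full component group assembled from this together with the quadratic characters cutting out the twists.

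\textbf{Step 2: Enumerate the $9$ subgroups and exhibit explicit curves.} Having pinned down $\ST_{\Tw}(C_0)$, part (1) becomes: classify, up to $\USp(6)$-conjugacy, the subgroups $G$ with $\ST^\circ \subseteq G \subseteq \ST_{\Tw}(C_0)$ that \emph{can} occur as $\ST(\Jac \wt C)$ for some $\mQ$-twist, and show each actually occurs. The ``can occur'' direction is pure finite group theory: list conjugacy classes of subgroups of the component group $\pi_0(\ST_{\Tw}(C_0))$ that are realizable as Galois images, ruling out the non-realizable ones by the usual obstruction that the image of $G_\mQ$ must be a quotient of an actual Galois group acting compatibly on the endomorphism algebra (cf.\ the passage from $52$ to $34$ to $18$ in the genus-$2$ story). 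For the ``does occur'' direction, for each candidate $G$ I would write down an explicit $\mQ$-twist $\wt C$ of $C_0$ (these go in Table~\ref{Table4}, built as twists of $C$ or $C'$ using quadratic characters and the $\mQ(i)$- resp.\ $\mQ$-rational automorphisms), compute $\End((\Jac \wt C)_{\overline{\mQ}})$ with its $G_\mQ$-action using \texttt{Magma}, read off $\ST(\Jac \wt C)$, and verify it equals $G$. The self-consistency check is that $\ST(\Jac C')$ should be the smallest group (connected component only, giving the $\frac{1}{6\pi}\int\sqrt{4-(t/3)^2}\,dt$ distribution advertised in the introduction), since $C'$ has its full geometric automorphism group over $\mQ$.

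\textbf{Step 3: Measures and equidistribution.} Part (2) is a direct computation: for each of the $9$ groups $G$ and each $i = 0,\dots,6$, the pushforward $\Phi_{i*}(\mu(G))$ is computed by integrating the $i$-th coefficient of the characteristic polynomial over $G$ against Haar measure --- since $G^\circ \iso \SU(2)$ acting through the triple standard representation, this reduces to elementary integrals of products of the measure $\frac{1}{2\pi}\sqrt{4-t^2}\,dt$ over $[-2,2]$ on the connected component, averaged over the finite set of components; uniqueness given $\ST(\Jac\wt C)$ is then automatic. Part (3), the generalized Sato--Tate conjecture itself, follows because $\Jac C_0 \sim_{\overline{\mQ}} E^3$ with $E$ a non-CM elliptic curve over $\mQ$: the automorphy (hence equidistribution, via the analytic properties of symmetric-power $L$-functions) needed is exactly what is known for non-CM elliptic curves over $\mQ$ and their relevant Artin twists, so each twist $\wt C$ of $C_0$ has $\Jac \wt C$ isogenous over $\overline{\mQ}$ to a power of $E$, and one invokes the potential automorphy / known Sato--Tate results for $E$ together with the explicit component-group description to deduce equidistribution against $\mu(\ST(\Jac\wt C))$. \textbf{The main obstacle} I anticipate is Step~2's ``can occur'' half: correctly determining which abstract subgroups of $\pi_0(\ST_{\Tw}(C_0))$ are actually realized by $G_\mQ$-actions on twists --- this requires a careful cohomological bookkeeping of twists (a computation in $H^1(G_\mQ, \Aut((C_0)_{\overline{\mQ}}))$) and ruling out spurious candidates, rather than any single hard theorem.
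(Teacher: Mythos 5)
Your overall architecture matches the paper's: compute $\ST_{\Tw}(C_0)$, reduce to subgroups containing the identity component, exhibit twists, compute measures, and invoke an elliptic-curve equidistribution input. However, there is a genuine conceptual gap in Step~2, which you flag as the ``main obstacle'' but diagnose incorrectly.

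You expect that the passage to the final count of $9$ Sato--Tate groups will come from \emph{ruling out non-realizable subgroups} of the component group by a Galois/cohomological obstruction, analogously to the $52 \to 34 \to 18$ passage in genus~$2$. This is not what happens here. First, the component group $\pi_0(\ST_{\Tw}(C_0))$ is $S_4$ (order $24$), not something of order $48$: the hyperelliptic involution becomes $-\id$, which already lies in the connected piece $\SU(2)_3$, so it is killed when passing to components. Second, \emph{every} one of the $11$ conjugacy classes of subgroups of $S_4$ is realized by a $\mathbb{Q}$-twist. The paper proves this via Proposition~\ref{prop:grouparising}: because $\Aut(C') = \Aut(C'_{\overline{\mathbb{Q}}})$ (the twist $C'$ has its full geometric automorphism group rational), a cocycle is just a continuous homomorphism $G_\mathbb{Q} \to \Aut(C')$, so realizability reduces to the inverse Galois problem for subgroups of $S_4$ --- all solvable, hence all realizable by Shafarevich. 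There is no obstruction to analyze. The reduction from $11$ to $9$ comes instead from Lemma~\ref{lemma:sato-tate-conjugacy-classes}: the subgroups corresponding to the $S_4$-conjugacy classes of $\langle (1,2) \rangle$ and $\langle (1,4)(2,3) \rangle$, while not conjugate in $S_4$, extend to subgroups of $\USp(6)$ that \emph{are} conjugate inside $\USp(6)$ (and similarly for the two Klein-four classes). Your phrase ``up to $\USp(6)$-conjugacy'' gestures at this, but your stated plan --- enumerate $S_4$-conjugacy classes of subgroups, then rule some out by Galois constraints --- would not find the right number and is built on a misconception. The actual work is exhibiting an explicit $g \in \USp(6)$ conjugating one group to the other, which is a concrete linear-algebra check, not a cohomological argument.

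Two smaller points. For Step~1, note also that finding the twist $C'$ with all automorphisms rational is a nontrivial and structurally important step (Section~\ref{section:the-right-twist}); Proposition~\ref{prop:grouparising} and hence the realizability of all $11$ subgroups hinges on having such a base curve, so ``build twists of $C$ or $C'$'' elides a key prerequisite. For Step~3, ``potential automorphy / known Sato--Tate results for $E$'' is in the right spirit, but what is actually needed and used is the \emph{refined} Sato--Tate equidistribution for the non-CM elliptic curve $E$ conditioned on Artin symbols in the splitting field $K_\xi/\mathbb{Q}$ of the twist (the joint equidistribution statement, cf.\ \cite{MR3012726}), combined with Chebotarev to weight the conjugacy classes; the bare Sato--Tate theorem for $E$ alone would not suffice.
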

We prove Theorem \ref{thm:main}(1) in section \ref{sec:CompGrp} after we compute the twisting Sato\--Tate group of $C$ in Lemma \ref{lemma:algebraic-sato-tate-group-computation}. In Propositions \ref{prop:MomentSequences2} and \ref{prop:MomentSequences4}, we compute pushforwards of measures determined by the distinct Sato\--Tate groups and establish Theorem \ref{thm:main}(2). Finally, in Theorem \ref{thm:ProofST}, we complete the proof of Theorem \ref{thm:main}(3).

\begin{remark}
In this paper, we give an example of a curve, namely $C'$ from~\eqref{equation:curve-notation}, over $\mQ$, whose Sato\--Tate distribution is given by the measure
$$\frac{1}{6\pi} \int_a^b \sqrt{4-\left( \frac{t}{3} \right)^2} dt,$$
where $\left[ a,b \right] \subset \left[ -6,6 \right]$. 
Although examples of a curve over $\mathbb Q$ with such a distribution were likely known to the experts,
to the authors' knowledge, all previous in print examples 
of curves with such a distribution were defined over 
an extension of $\mathbb Q$, such as $\mQ(\sqrt{-3})$. 
\end{remark}

\begin{remark}[Finding the right twist]
An important step in our proof is exhibiting the twist $C'$ of $C_0$, which has key property that $\Jac C' \sim E_1 \times E_2 \times E_3$ where $E_i$ are non-CM elliptic curves over $\mQ$. In \cite[Section~4]{fite2012sato}, the authors introduce twists of the genus 2 curves $y^2 = x^5 - x$ and $y^2  = x^6 + 1$ whose Jacobians are also isogenous to a product of elliptic curves; however, \cite[Section~4]{fite2012sato} does not discuss the methods for obtaining such twists. We initially tried to find $C'$ using a brute force search
over all genus 3 hyperelliptic curves $X$ of the form $y^2 = f(x)$, where $f$ is a degree 8 symmetric polynomial satisfying $\#\Aut(X) = 48$. 

After several overnight computations with no fruitful results, we employed a different approach. Recall that the twists of a curve $C$ are in bijection with the cohomology classes in $H^1(G_{\mQ}, \Aut (C_{\overline{\mQ}}))$. We first identify our desired twist as a cocycle in $H^1(G_{\mQ}, \Aut (C_{\overline{\mQ}}))$ and then exploit the fact that $H^1(G_{\mQ}, \Aut (C_{\overline{\mQ}}))$ also parameterizes automorphisms of $C$-torsors. In particular, we can identify the function field of our desired twist using invariant theory on $\overline{\mQ}(C)$, and whence we can find a model for $C'$; see section~\ref{section:the-right-twist} for details. We found that any $C'$ with the desired property is hyperelliptic after base change to $\overline \mQ$, but $C'$ necessarily does not have a degree 2 map to $\mP^1_{\mathbb Q}$ defined over $\mQ$; in particular, our initial brute force search was destined to failure.
\end{remark}

\subsection{Organization}\label{sub:Org}
The outline of this paper is as follows:
In \autoref{section:background}, we recall the definition of the algebraic Sato\--Tate group \cite{banaszak2011algebraic} and the twisting algebraic Sato\--Tate group \cite{fite2014sato}. In \autoref{sec:TwistingST}, we compute the twisting algebraic Sato\--Tate group of the curve $y^2 = x^8 - 14x^4 + 1$.
In \autoref{section:the-right-twist},
we construct our explicit twist of $C_0$ with all automorphisms defined
over $\mathbb Q$ and use this twist to compute explicit models for all other twists
along with the corresponding Sato\--Tate groups.
Then, in \autoref{section:proving-sato-tate},
we prove the generalized Sato\--Tate conjecture for all twists
of $C_0$. In \autoref{sect:MomentSequences}
we compute the moment sequences of all twists of $C_0$.
In Appendix~\ref{section:appendix-determining-sato-tate-group} we include an application of our classification of
twists of $C_0$, showing
that the only Sato\--Tate group of a 3-dimensional abelian variety whose first moment sequence agrees with that of 
$C'$ is $\SU(2)_3$.
Finally, in Appendix~\ref{section:tables},
we give a table summarizing key properties of the
twists of $C$ and 
three tables displaying the first few theoretical and experimental
values of the three
moment sequences associated to the twists of $C$.

\subsection{Acknowledgements} This project was completed as part of a 2016 Arizona Winter School project under the direction of Andrew V. Sutherland. We thank the organizers of the winter school for cultivating an environment which led to the development of this project. 
We also wish to thank Noam Elkies, Francesc Fit\'e, Elisa Lorenzo Garc\'ia, Ken Ono, Andrew V. Sutherland, and David Zureick\--Brown for helpful
conversations. Finally, we thank Andrew V. Sutherland for providing us with a ``photo album" for the Sato\--Tate distributions of our twisted family.\footnote{This can be found at \url{http://math.mit.edu/~drew/aws/g3AWSProjectSatoTateDistributions.html}.} The computations in this paper were performed using \texttt{Magma} \cite{MR1484478}.

\section{Background}
\label{section:background}
After specifying our conventions for the matrix groups $\operatorname{U}(6)$, $\operatorname{Sp}(6)$ and $\operatorname{USp}(6)$,
we recall the definition of the twisting algebraic Sato\--Tate
group and of the twisting Sato\--Tate group. 
\begin{defn}\label{def:AlgGroups}
Set 
\[
J:=\begin{pmatrix}
0 & 1 \\
-1 & 0 \\
& & 0 & 1 \\
& & -1 & 0 \\
& & & & 0 & 1 \\
& & & & -1 & 0
\end{pmatrix}
\]
and for $K$ a field, define 
\[
\operatorname{Sp}(6)/K :=\{ M \in \GL(6)/K : M^t J M = J \}
\]
and
\[
\operatorname{GSp}(6)/K :=\{ M \in \GL(6)/K : M^t J M = \lambda J \text{ for some } \lambda \in K^\times \}.
\]
\noindent We often omit $K$ when $K$ is clear from context. When $K=\mathbb{R}$ is the field of real numbers, we also define 
\begin{align*}
	\operatorname{U}(6)/\mathbb{R} :=\{M \in \GL(6)/\mathbb{C} : M^H M = \id\},
\end{align*}
where $M^H$ is the Hermitian conjugate of $M$. We then define 
\[
	\operatorname{USp}(6)/\mathbb{R} := \operatorname{U}(6)/\mathbb{R} \cap \operatorname{Sp}(6)/\mathbb{R}.
\]
\end{defn}

Let $A$ be an abelian variety of dimension $\leq 3$ defined over $\mQ$. Fix an embedding $\iota\colon \mQ \ext \mC$ and a symplectic basis for the singular homology group $H_1(A_{\mC}^{\text{top}},\mQ)$ where the superscript refers to the underlying topological space. We can use this basis to equip this space with an action of $\GSp(2g)/\mQ$. For each $\tau \in G_{\mathbb Q}$, define the \cdef{twisting Lefschetz group} by

\begin{equation}\label{eqn:twistlef}
\Lefs_A(\tau) := \brk{\gamma \in \Sp(2g) : \gamma\inv\alpha\gamma = {}^{\tau}{}{\alpha} \text{ for all } \alpha \in \End(A_{\overline{\mQ}}) \otimes \mQ};
\end{equation}
when $A=\Jac C$ for some curve $C$, we shall often write $L_C(\tau)$ instead of $L_{\Jac C}(\tau)$.

\begin{defn}\label{def:STgroup}
We define the \cdef{algebraic Sato\--Tate group} of $A$, notated $\AST(A)$, as the union
$$\AST(A) = \bigcup_{\tau \in G_{\mQ}} \Lefs_A(\tau).$$ 
We also define the \cdef{Sato\--Tate group} of $A$, notated $\ST(A)$, as a maximal compact subgroup of $\AST(A)\otimes_{\mQ}\mC$; see \cite[Theorem~6.1, Theorem~6.10]{banaszak2011algebraic}.
\end{defn}

\begin{remark}\label{rmk:quadratic-twisting}
The Sato\--Tate group is not invariant under arbitrary twisting, however, it is invariant under quadratic twisting which acts through the hyperelliptic involution; see \cite[Remark~2.1]{fite2014sato}.
\end{remark}

In this paper, we shall only be concerned with the case where $A = \Jac C$ where $C$ is a genus 3 hyperelliptic curve over $\mQ$, and hence we shall notate $\AST(C) := \AST(\Jac C)$ and $\ST(C) := \ST(\Jac C)$. Using the identification of $H_1$ of the Jacobian with the tangent space to the Jacobian, we may view $\Aut(C)$ as a subgroup of $\GL(H_1(\Jac C_{\mC}^{\text{top}}),\mQ)$. This identification allows us to define the twisting algebraic Sato\--Tate group.

\begin{defn}\label{def:TwistSTgroup}
The \cdef{twisting algebraic Sato\--Tate group} of $C$ is the algebraic subgroup of $\Sp(2g)/\mQ$ defined by
$$\AST_{\Tw}(C) := \AST(\Jac C) \cdot \Aut(C_{\overline{\mQ}}).$$
\end{defn}

Let $\wt{C}$ be a twist of $C$, meaning that $\wt{C}$ is defined over $\mQ$ and there exists a finite extension $L/\mQ$ with $\wt{C}_L \iso C_{L}$. Let $\phi\colon \wt{C}_L \rra C_L$ be a fixed isomorphism. By considering the induced isomorphism $H_1(\Jac\wt{C}_{\mC}^{\text{top}},\mQ) \rightarrow H_1(\Jac C_{\mC}^{\text{top}},\mQ)$, which we denote as $\widetilde{\phi}$, we see that 
\begin{equation}
\Lefs_{\Jac \wt{C}}(\tau) = \widetilde{\phi}\inv\Lefs_{\Jac C}(\tau)({}^\tau \widetilde{\phi}). 
\end{equation}
If we write $\gamma'\in \Lefs_{\Jac \wt{C}}(\tau)$ as $\widetilde{\phi}\inv\gamma({}^{\tau}\widetilde{\phi})$ with $\gamma \in \Lefs_{\Jac C}(\tau)$, then \cite[Lemma~2.3]{fite2014sato} asserts that the map
\begin{align*}
\Lambda_{\widetilde{\phi}}\colon \AST(\Jac \wt{C}) &\lrra \AST_{\Tw}(C) \\
\gamma' &\longmapsto \gamma({}^{\tau}\widetilde{\phi})\widetilde{\phi}\inv
\end{align*}
is a well-defined monomorphism of groups. This lemma allows us to define the twisting Sato\--Tate group of $C$.
\begin{defn}
	\label{definition:twisting-sato-tate-group}
The \cdef{twisting Sato\--Tate group} $\ST_{\Tw}(C)$ of $C$ is a maximal compact subgroup of $\AST_{\Tw}(C)\otimes \mC$.
\end{defn}

\begin{remark}
From \cite[Lemma~2.3]{fite2014sato}, we have that for any twist $\wt{C}$ of $C$, the Sato\--Tate group $\ST(C)$ is isomorphic to a subgroup of $\ST_{\Tw}(C)$. Further, the component groups of $\ST_{\Tw}(C)$ and $\AST_{\Tw}(C)\otimes \mC$ are isomorphic, and the identity components of $\ST_{\Tw}(C)$ and $\ST(C)$ are equal.
\end{remark}

We conclude this section with a proposition which informs us as to which subgroups can arise as Sato\--Tate groups of twists of a particular curve. First, we recall the following simple but fundamental lemma, describing the action of twisting on the twisting Lefschetz group of a curve.

\begin{lemma}[\protect{\cite[Definition~2.20]{fite2012sato}}]\label{lem:howtotwist}
Let $C/\mQ$ be a curve and let $\xi$ be a continuous 1-cocycle $\xi\colon G_{\mQ} \to \operatorname{Aut}(C_{\overline{\mQ}})$. Let $C^{\xi}$ be the twist of $C$ by $\xi$. For all $\tau \in G_{\mQ}$ the following equality holds:
\[
\Lefs_{C^\xi}(\tau) = \Lefs_C(\tau) \xi(\tau)^{-1}.
\]
\end{lemma}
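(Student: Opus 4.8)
The plan is to reduce the statement to the behaviour of $\Lefs$ under an arbitrary $\overline{\mQ}$-isomorphism of curves, which is recorded in the identity $\Lefs_{\Jac \wt{C}}(\tau) = \widetilde{\phi}^{-1}\,\Lefs_{\Jac C}(\tau)\,({}^{\tau}\widetilde{\phi})$ displayed just before the statement, and then to specialise that identity to the twist attached to the cocycle $\xi$.

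First I would recall the construction of $C^{\xi}$: there is a $\overline{\mQ}$-isomorphism $\phi\colon (C^{\xi})_{\overline{\mQ}} \to C_{\overline{\mQ}}$, unique up to post-composition with an element of $\Aut(C_{\overline{\mQ}})$, which realises $C^{\xi}$ as the twist of $C$ by $\xi$; with the sign conventions in force this means $\xi(\tau) = \phi\circ {}^{\tau}\phi^{-1}\in\Aut(C_{\overline{\mQ}})$, equivalently ${}^{\tau}\phi = \xi(\tau)^{-1}\circ\phi$, for every $\tau\in G_{\mQ}$. Passing to first homology, let $\widetilde{\phi}$ be the induced isomorphism $H_1(\Jac (C^{\xi})_{\mC}^{\mathrm{top}},\mQ)\to H_1(\Jac C_{\mC}^{\mathrm{top}},\mQ)$; I would use $\widetilde{\phi}$ to identify these two homology groups, and hence the two ambient symplectic groups. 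Under this identification $\Aut((C^{\xi})_{\overline{\mQ}})$ and $\Aut(C_{\overline{\mQ}})$ become the same subgroup of $\Sp(2g)$, and, since the formation of the map induced on $H_1$ is compatible with the Galois action, the relation ${}^{\tau}\phi = \xi(\tau)^{-1}\circ\phi$ passes to ${}^{\tau}\widetilde{\phi} = \xi(\tau)^{-1}\,\widetilde{\phi}$ in $\GL(H_1)$, where now $\xi(\tau)$ denotes the image of $\xi(\tau)$ under $\Aut(C_{\overline{\mQ}})\hookrightarrow\Sp(2g)$.

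The rest is substitution. Applying the displayed identity with $\wt{C} = C^{\xi}$ and this $\phi$, and conjugating by $\widetilde{\phi}$ (that is, transporting $\Lefs_{C^{\xi}}(\tau)$ along the chosen identification) to move everything into $\Sp(2g)$ acting on $H_1(\Jac C)$, one obtains
\[
\Lefs_{C^{\xi}}(\tau) = \widetilde{\phi}\bigl(\widetilde{\phi}^{-1}\,\Lefs_{C}(\tau)\,({}^{\tau}\widetilde{\phi})\bigr)\widetilde{\phi}^{-1} = \Lefs_{C}(\tau)\,({}^{\tau}\widetilde{\phi})\,\widetilde{\phi}^{-1} = \Lefs_{C}(\tau)\,\xi(\tau)^{-1}\widetilde{\phi}\,\widetilde{\phi}^{-1} = \Lefs_{C}(\tau)\,\xi(\tau)^{-1},
\]
which is the claim. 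Equivalently, and more directly, one can argue from the definition of $\Lefs$: the isomorphism $\phi$ identifies $\End((C^{\xi})_{\overline{\mQ}})\otimes\mQ$ with $\End(C_{\overline{\mQ}})\otimes\mQ$ as algebras but transports the Galois action $\alpha\mapsto {}^{\tau}\alpha$ on the source to the twisted action $\alpha\mapsto \xi(\tau)\,({}^{\tau}\alpha)\,\xi(\tau)^{-1}$ on the target; hence $\gamma\in\Lefs_{C^{\xi}}(\tau)$ says $\gamma^{-1}\alpha\gamma = \xi(\tau)\,({}^{\tau}\alpha)\,\xi(\tau)^{-1}$ for all $\alpha$, which rearranges to $(\gamma\xi(\tau))^{-1}\alpha(\gamma\xi(\tau)) = {}^{\tau}\alpha$ for all $\alpha$, i.e.\ to $\gamma\xi(\tau)\in\Lefs_{C}(\tau)$.

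The one step where I would be careful — and really the only content beyond bookkeeping with cosets of the connected group $\Lefs_C(1)$ — is the passage from the descent datum defining $C^{\xi}$ to conjugation by $\xi(\tau)$ on $\End\otimes\mQ$ and on $H_1$ (equivalently the relation ${}^{\tau}\widetilde{\phi}=\xi(\tau)^{-1}\widetilde{\phi}$), keeping track of which side the inverse falls on so as to match the normalisation in the statement; one should also note that changing $\phi$ changes $\xi$ by a coboundary and conjugates the whole family $\{\Lefs_{C^{\xi}}(\tau)\}_{\tau}$ accordingly, so that the identity is independent of the choice of $\phi$.
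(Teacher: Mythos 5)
The paper does not prove this lemma; it simply cites it as \cite[Definition~2.20]{fite2012sato}, so there is no argument in the text for you to match against. Your proof is correct, and both routes you sketch are legitimate. The first route — specialize the displayed coset identity $\Lefs_{\Jac \wt{C}}(\tau) = \widetilde{\phi}^{-1}\Lefs_{\Jac C}(\tau)({}^{\tau}\widetilde{\phi})$ to $\wt{C} = C^{\xi}$ and transport along $\widetilde{\phi}$ — is exactly how the statement is meant to be read, since the equality $\Lefs_{C^\xi}(\tau) = \Lefs_C(\tau)\xi(\tau)^{-1}$ only makes sense after the identification $\Lambda_{\widetilde{\phi}}(\gamma') = \widetilde{\phi}\gamma'\widetilde{\phi}^{-1}$ that the paper sets up just before. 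The second route — unwinding the defining condition $\gamma^{-1}\alpha\gamma = {}^{\tau}\alpha$ under the transported Galois action $\alpha \mapsto \xi(\tau)({}^{\tau}\alpha)\xi(\tau)^{-1}$ on $\End(\Jac C_{\overline{\mQ}})\otimes\mQ$ — is self-contained and does not even rely on the earlier display, which the paper itself states without proof. Your caution about the cocycle normalization is well placed: the factor $\xi(\tau)^{-1}$ (rather than $\xi(\tau)$) on the right is tied to the convention $\xi(\tau) = \phi\circ{}^{\tau}\phi^{-1}$, equivalently ${}^{\tau}\widetilde{\phi} = \xi(\tau)^{-1}\widetilde{\phi}$; with the opposite Silverman-style convention $\xi(\tau) = {}^{\tau}\phi\circ\phi^{-1}$ one would land on $\Lefs_C(\tau)\xi(\tau)$ instead, so stating which convention you use, as you do, is essential. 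Your closing remark that replacing $\phi$ changes $\xi$ by a coboundary and conjugates the whole family $\{\Lefs_{C^{\xi}}(\tau)\}$ coherently is a correct and useful sanity check of well-definedness.
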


\begin{prop}\label{prop:grouparising}
Let $C/\mQ$ be a curve such that $\Aut(C)=\Aut(C_{\overline{\mQ}})$, let $H \subset \Aut(C)$ be a subgroup, and let $G$ denote the image of $H$
under the composition
\[
H \to \operatorname{ST}_{\Tw}(C) \twoheadrightarrow
\operatorname{ST}_{\Tw}(C)/\operatorname{ST}_{\Tw}(C)^0.
\]
Suppose that the inverse Galois
problem for $H$ has a solution. Then there exists a twist $C^G$ of $C$, defined over $\mQ$, such that $\operatorname{ST}(C^G) / \operatorname{ST}(C^G)^0 = G$.
\end{prop}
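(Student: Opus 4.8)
The plan is to solve the inverse Galois problem for $H$ to produce a number field, use the resulting surjection to build a cocycle valued in $\operatorname{Aut}(C_{\overline{\mQ}})$, and then apply Lemma~\ref{lem:howtotwist} to compute the twisting Lefschetz groups of the corresponding twist, finally reading off its Sato\--Tate component group. Concretely, since the inverse Galois problem for $H$ has a solution, there is a finite Galois extension $L/\mQ$ together with an isomorphism $\operatorname{Gal}(L/\mQ) \xrightarrow{\sim} H$; composing $G_{\mQ} \twoheadrightarrow \operatorname{Gal}(L/\mQ)$ with this isomorphism and with the inclusion $H \hookrightarrow \operatorname{Aut}(C) = \operatorname{Aut}(C_{\overline{\mQ}})$ yields a continuous homomorphism $\xi\colon G_{\mQ} \to \operatorname{Aut}(C_{\overline{\mQ}})$. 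Because the target is given the trivial Galois action here (the hypothesis $\operatorname{Aut}(C) = \operatorname{Aut}(C_{\overline{\mQ}})$ means every automorphism is already defined over $\mQ$, so $G_{\mQ}$ acts trivially on $\operatorname{Aut}(C_{\overline{\mQ}})$), a homomorphism is the same thing as a continuous $1$-cocycle, so $\xi$ defines a twist $C^\xi$ of $C$ over $\mQ$. Set $C^G := C^\xi$.

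Next I would compute $\operatorname{ST}(C^G)/\operatorname{ST}(C^G)^0$. By Lemma~\ref{lem:howtotwist}, for every $\tau \in G_{\mQ}$ we have $\Lefs_{C^\xi}(\tau) = \Lefs_C(\tau)\,\xi(\tau)^{-1}$, so
\[
\AST(\Jac C^\xi) = \bigcup_{\tau \in G_{\mQ}} \Lefs_C(\tau)\,\xi(\tau)^{-1} \subset \AST(\Jac C)\cdot \operatorname{Aut}(C_{\overline{\mQ}}) = \AST_{\Tw}(C),
\]
and in fact the right-hand side equals $\AST(\Jac C)^0 \cdot \bigl(\text{image of }\xi\bigr)$ up to the finitely many components of $\AST(\Jac C)$ itself. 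Passing to a maximal compact subgroup and then to the component group (using the remark that the component group of $\operatorname{ST}_{\Tw}(C)$ agrees with that of $\AST_{\Tw}(C)\otimes\mC$, and that $\operatorname{ST}(C^G)^0 = \operatorname{ST}_{\Tw}(C)^0$ since $C^G$ is a twist of $C$), the component group of $\operatorname{ST}(C^G)$ is identified with the image in $\operatorname{ST}_{\Tw}(C)/\operatorname{ST}_{\Tw}(C)^0$ of the subgroup of $\AST_{\Tw}(C)$ generated by $\AST(\Jac C)$ together with $\xi(G_{\mQ}) = H$. One then checks that the contribution of $\AST(\Jac C)$ to this component group is absorbed into $\operatorname{ST}_{\Tw}(C)^0$ or is already accounted for — more precisely, by construction $\operatorname{ST}(C)$ itself (the $\xi$-trivial case) has component group equal to the image of $\operatorname{Aut}(C)$'s intersection data, and here since $\xi$ is surjective onto $H$ the image is exactly the image $G$ of $H$ under $H \to \operatorname{ST}_{\Tw}(C) \twoheadrightarrow \operatorname{ST}_{\Tw}(C)/\operatorname{ST}_{\Tw}(C)^0$, as desired.

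The main obstacle is the bookkeeping in the last step: one must be careful that the component group of $\operatorname{ST}(C^G)$ is generated purely by the image of $\xi$ and does not pick up extra components coming from $\AST(\Jac C)$ having nontrivial component group on its own, nor lose components because the isogeny decomposition $\Jac C \sim_{\overline{\mQ}} E^3$ forces $\operatorname{ST}_{\Tw}(C)^0$ to be large. The clean way to handle this is to invoke \cite[Lemma~2.3]{fite2014sato} through the monomorphism $\Lambda_{\widetilde{\phi}}$: applying it to the isomorphism $\phi\colon C^\xi_L \to C_L$ over the splitting field $L$ identifies $\AST(\Jac C^\xi)$ with a subgroup of $\AST_{\Tw}(C)$, and tracing through the definition of $\Lambda_{\widetilde\phi}$ shows this subgroup is exactly $\AST(\Jac C)\cdot \xi(G_{\mQ})$. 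Since $\operatorname{ST}(C)$ has component group lying in the kernel of nothing problematic — indeed one should record as a preliminary observation that $\operatorname{ST}(C)^0 = \operatorname{ST}_{\Tw}(C)^0$ and $\AST(\Jac C)$'s components already map into whatever $\operatorname{ST}_{\Tw}(C)/\operatorname{ST}_{\Tw}(C)^0$ records for the trivial twist — the quotient computes to $G$. I would isolate this as the technical heart of the argument and present the Galois-theoretic construction of $\xi$ and the appeal to Lemma~\ref{lem:howtotwist} as the routine scaffolding around it.
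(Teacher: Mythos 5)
Your scaffolding matches the paper's: use the hypothesis that the Galois action on $\Aut(C_{\overline{\mQ}})$ is trivial to turn a surjection $G_{\mQ} \twoheadrightarrow \Gal(L/\mQ) \cong H$ into a cocycle $\xi$, take $C^G := C^\xi$, and compute via Lemma~\ref{lem:howtotwist}. (Using $H$ rather than $G$ as the target of $\xi$ is actually cleaner than the paper's phrasing, since $H$ genuinely sits inside $\Aut(C_{\overline{\mQ}})$.) However, you correctly flag ``the main obstacle'' — that $\AST(\Jac C)$ might itself contribute extra components — and then do not actually dispose of it. The paper's proof hinges on the single concrete fact
\[
\Lefs_{C}(\tau)= \Lefs_{C}(\id)=\Lefs_{C}(\id)^0 \quad \text{for all } \tau \in G_{\mQ},
\]
which says $\AST(\Jac C)$ is connected; this follows from the hypothesis $\Aut(C) = \Aut(C_{\overline{\mQ}})$ because (via the analogue of \cite[Lemma~4.2]{fite2014sato} recorded in Remark~\ref{rmk:endoauto}) all endomorphisms of $\Jac C$ are then defined over $\mQ$, so Galois acts trivially on $\End((\Jac C)_{\overline{\mQ}}) \otimes \mQ$ and the defining condition for $\Lefs_C(\tau)$ is independent of $\tau$. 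Once this is in hand, the union $\bigcup_\tau \Lefs_C(\tau)\xi(\tau)^{-1}$ collapses to $\Lefs_C(\id)\cdot\xi(G_{\mQ})$ and the component group is visibly $G$.

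Your proposed workaround via $\Lambda_{\widetilde\phi}$ does not close the gap: the assertion that $\AST(\Jac C^\xi) = \AST(\Jac C)\cdot \xi(G_{\mQ})$ is \emph{false} in general, because the left side is the ``diagonal'' union $\bigcup_\tau \Lefs_C(\tau)\xi(\tau)^{-1}$ while the right side is the ``product'' union $\bigcup_{\tau,\tau'} \Lefs_C(\tau)\xi(\tau')^{-1}$. These agree precisely when $\Lefs_C(\tau)$ is independent of $\tau$, which is exactly the connectedness fact you never establish. Likewise the sentence ``component group lying in the kernel of nothing problematic\dots already map into whatever the component group records for the trivial twist'' is not a mathematical argument. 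To repair the proof, state and justify the connectedness of $\AST(\Jac C)$ explicitly, then the rest of your outline goes through.
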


\begin{remark}
Notice that $\operatorname{ST}( C^G) / \operatorname{ST}( C^G)^0 = G$ is an actual equality and not just an abstract isomorphism. By \cite[Lemma~2.3]{fite2014sato}, we have a canonical monomorphism
\[
\operatorname{ST}(C^G) \hookrightarrow \operatorname{AST}(C^G) \hookrightarrow \operatorname{AST}_{\Tw}(C)
\]
which induces an injection $\operatorname{ST}(C^G)/\operatorname{ST}( C^G)^0 \hookrightarrow \operatorname{ST}_{\Tw}(C^G)/\operatorname{ST}_{\Tw}(C^G)^0$. The equality above is to be interpreted as relative to this embedding.
\end{remark}

\begin{proof}
Recall that twists of $C$ are in bijection with cohomology classes in $H^1(G_{\mQ}, \operatorname{Aut}(C) )$. Given that the Galois action on $\operatorname{Aut}(C)$ is trivial, a cocycle representing such a cohomology class amounts to a continuous homomorphism $\cocyc\colon G_{\mQ} \to \operatorname{Aut}(C)$. 
Fix a Galois extension $L$ of $\mQ$ such that $\operatorname{Gal}(L/\mQ) \cong G$ and choose as $\cocyc$ the canonical projection $G_{\mQ} \to \operatorname{Gal}(L/\mQ)$ followed by an (arbitrary) isomorphism $\operatorname{Gal}(L/\mQ) \cong G$. We shall show that one can take $C^G$ to be $C^\cocyc$, the twist of $C$ corresponding to $\cocyc$.

By our construction of $C$, we know that for all $\tau \in G_{\mQ}$ we have $$\Lefs_{C}(\tau)= \Lefs_{C}(\id)=\Lefs_{C}(\id)^0,$$and hence  $$\operatorname{AST}(C^\cocyc) = \bigcup_{\tau \in G} \Lefs_{C^\cocyc}(\tau) =  \bigcup_{\tau \in G} \Lefs_{C}(\id) \cocyc(\tau)^{-1}.$$ Therefore, $\operatorname{AST}(C^\cocyc)$, seen as a subgroup of $\operatorname{AST}_{Tw}(C^\xi)=\operatorname{AST}_{Tw}( C)$, is precisely the union of those connected components that intersect the image of $\cocyc$. By construction of $\cocyc$, this implies that the group of components of $\operatorname{AST}(C^\cocyc)$ is $G$. Since $\operatorname{ST}(C^\cocyc)$ is a maximal compact subgroup of $\operatorname{AST}(C^\cocyc) \otimes_{\mQ} \mathbb{C}$, we have that
\[
\operatorname{ST}( C^\cocyc) / \operatorname{ST}(C^\cocyc)^0 = \operatorname{AST}(C^\cocyc) / \operatorname{AST}( C^\cocyc)^0 = G.
\]
\end{proof}

\section{The twisting algebraic Sato\--Tate group}\label{sec:TwistingST}
The main result of this section is \autoref{proposition:twisting-sato-tate-group-of-C}, in which
 we determine the twisting Sato\--Tate group for the curve $C$.
In order to compute the twisting Sato\--Tate group, we first compute
the algebraic Sato\--Tate group in~\autoref{lemma:algebraic-sato-tate-group-computation}. To do this, we now compute the decomposition of the Jacobian of $C$
in~\autoref{lemma:jacobian-factorization}.

\begin{lemma}
	\label{lemma:jacobian-factorization}
	The Jacobian of $C$ decomposes as 
$\Jac C \sim (E_1)^2 \times E_2$ where 
\begin{align*}
E_1 \colon y^2 &= x^3 - x^2 - 4x + 4, \\
E_2 \colon y^2 &= x^3 + x^2 - 4x - 4
\end{align*}
are non-CM elliptic curves; moreover $\End(\Jac C) \otimes_{\mZ} \mQ =  \Mat_2(\mQ) \times \mQ$.
\end{lemma}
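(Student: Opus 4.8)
The plan is to produce the elliptic curves $E_1, E_2$ explicitly and verify the stated isogeny decomposition by a direct geometric argument on the curve $C\colon y^2 = x^8 + 14x^4 + 1$. First I would exploit the automorphisms of $C$ coming from its extra symmetry: the map $\iota_1\colon (x,y)\mapsto (-x,y)$ and, more importantly, the map $\iota_2\colon (x,y)\mapsto (1/x, y/x^4)$, which is well-defined because $f(x) = x^8+14x^4+1$ is palindromic. Each of these involutions on $C$ induces a quotient map to a lower-genus curve, and by taking appropriate quotients one obtains three elliptic (or genus-$1$) quotient curves $C \to C/\langle \sigma_i\rangle$. Concretely, substituting $u = x^2$ (using $\iota_1$) gives the genus-$1$ curve $y^2 = u^4 + 14u^2 + 1$, and substituting $v = x + 1/x$ or $w = x - 1/x$ (using $\iota_2$-type involutions) gives further genus-$1$ quotients. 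The standard theory of quotients by automorphisms then gives an isogeny $\Jac C \sim \prod_i \Jac(C/\langle\sigma_i\rangle)$ once one checks the quotient curves have the right genus and the induced maps on $H_1$ are independent; matching the dimension count ($1+1+1=3$) confirms there are no leftover factors. I would then put each genus-$1$ quotient in Weierstrass form and recognize (or compute via \texttt{Magma}) that two of them are $\mathbb Q$-isomorphic, giving the $(E_1)^2 \times E_2$ shape, and simplify the Weierstrass equations to the stated $E_1\colon y^2 = x^3 - x^2 - 4x + 4$ and $E_2\colon y^2 = x^3 + x^2 - 4x - 4$.

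Next I would verify that $E_1$ and $E_2$ are non-CM and non-isogenous. For the non-CM claim it suffices to check that the $j$-invariants are not CM $j$-invariants; in practice one computes $j(E_1), j(E_2)$ and observes they are not in the finite list of rational CM $j$-invariants (equivalently, reduce mod a few small primes and check the trace of Frobenius is nonzero, ruling out supersingular-everywhere behavior, or invoke \texttt{Magma}'s \texttt{HasComplexMultiplication}). For non-isogeny of $E_1$ and $E_2$ over $\overline{\mathbb Q}$, it suffices to exhibit a prime $p$ of good reduction for both where the traces of Frobenius differ (an isogeny would force equal $L$-factors at all good primes); a single well-chosen small prime does this.

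From the isogeny $\Jac C \sim_{\mathbb Q} E_1^2 \times E_2$ together with $E_1, E_2$ non-CM and non-isogenous, the endomorphism algebra computation is essentially formal: $\End(E_i)\otimes\mathbb Q = \mathbb Q$ for each $i$, $\Hom(E_1,E_2)\otimes\mathbb Q = 0$, and hence
\[
\End(\Jac C)\otimes\mathbb Q \;=\; \End(E_1^2\times E_2)\otimes\mathbb Q \;=\; \Mat_2(\mathbb Q)\times\mathbb Q,
\]
using that endomorphism algebras are isogeny invariants and that $\End(E_1^2)\otimes\mathbb Q = \Mat_2(\End(E_1)\otimes\mathbb Q) = \Mat_2(\mathbb Q)$. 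I expect the main obstacle to be the first step: correctly identifying the three involutions and the corresponding quotient maps, and in particular checking that the three genus-$1$ quotients account for \emph{all} of $\Jac C$ up to isogeny (i.e.\ that the pullback maps on $H^0(\Omega^1)$ or equivalently the induced maps on homology span the full space with the right multiplicities) — rather than, say, two of them being pulled back from the same sub-abelian-variety. This is handled by a rank/independence check on differentials, which is a short explicit computation but is the one place where a genuine argument (not just bookkeeping) is needed; everything downstream is standard isogeny-invariance of endomorphism algebras.
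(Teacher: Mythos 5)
Your proposal is correct and is essentially the same approach as the paper's: both deduce the decomposition by taking quotients of $C$ by automorphisms, obtaining three elliptic-curve quotients defined over $\mQ$ whose product is isogenous to $\Jac C$. The paper simply invokes \texttt{Magma}'s \texttt{CurveQuotient} rather than carrying out the palindromic substitutions by hand, but that is a presentational rather than a mathematical difference, and your explicit choice of invariant functions $x+1/x$, $x-1/x$, $y/x^2$ is exactly what is going on under the hood. The one place where the paper's argument is genuinely slicker, and worth absorbing, is the repeated factor: rather than computing Weierstrass models and recognizing an isomorphism between two of the quotients (as you propose), the paper observes that the non-commutative group $\Aut(C)\cong D_8$ embeds into $\End(\Jac C)$, so the endomorphism algebra cannot be the commutative ring $\mQ\times\mQ\times\mQ$; hence two of the three elliptic factors must already be isogenous, and a local zeta-function comparison pins down which two. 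This gives the shape $(E_1)^2\times E_2$ without any Weierstrass-model manipulation. Your fallback (independence check on pullback differentials to see that the three quotient maps give an isogeny onto the product) is the right thing to worry about, and is standard for a $(\ZZ{2})^2$-action on a genus-$3$ curve; the rest of your argument --- non-CM via $j$-invariants, non-isogeny of $E_1,E_2$ via a single prime with differing Frobenius traces, and isogeny-invariance of the endomorphism algebra giving $\Mat_2(\mQ)\times\mQ$ --- is correct and matches the paper's conclusion.
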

\begin{proof}
By computing curve quotients using \texttt{Magma}'s intrinsic 
\begin{verbatim}
CurveQuotient(AutomorphismGroup(C,[a]));
\end{verbatim}
where \texttt{a}$\,\in \Aut(C)$, we see that $\Jac C_{\mQ} \sim E_1 \times E_2 \times E_3$ where $E_1$, $E_2$ are defined above and $E_3$ is some elliptic curve. Since the non-commutative $\Aut(C)$ embeds into $ \End(\Jac C)$, we have that $\End(\Jac C)$ is non-commutative as well, and therefore $E_3$ is isogenous to either $E_1$ or $E_2$. By considering local zeta functions, we conclude that $E_3 \sim E_1$. The statement that $\End(\Jac C) \otimes_{\mZ} \mQ =  \Mat_2(\mQ) \times \mQ$ follows from the curves being non-CM.
\end{proof}

\begin{remark}\label{rmk:endoauto}
	Recall that as mentioned in~\autoref{subsection:an-isolated-point},
all automorphisms of $C$ are defined over $\mathbb Q(i)$.
Analogously to \cite[Lemma~4.2]{fite2014sato}, one can show that the minimal number field over which all the automorphisms of $C_{\overline{\mQ}}$ are defined coincides with the minimal number field over which all the endomorphism of $(\Jac C)_{\overline{\mQ}}$ are defined; since $\Jac C$ decomposes up to isogeny as the product of three non-CM elliptic curves, we do not need to invoke any auxiliary results. 
\end{remark}

\begin{lemma}
	\label{lemma:algebraic-sato-tate-group-computation}
	We have that
$$\AST(C) = \brk{
\pwr{
\begin{array}{c|c|c}
M & {} & {} \\
\hline
{} & M & {} \\
\hline
{} & {} & \pm M
\end{array}
} : M \in \SU(2)}.$$
\end{lemma}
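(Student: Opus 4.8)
The plan is to compute the algebraic Sato--Tate group $\AST(C) = \bigcup_{\tau \in G_{\mQ}} \Lefs_C(\tau)$ directly from the known decomposition of $\Jac C$ and its endomorphism algebra. By Lemma~\ref{lemma:jacobian-factorization}, we have an isogeny $\Jac C \sim (E_1)^2 \times E_2$ with $E_1, E_2$ non-CM, and $\End(\Jac C) \otimes \mQ \iso \Mat_2(\mQ) \times \mQ$; the first step is to fix such an isogeny over $\overline{\mQ}$ and use it to identify the twisting Lefschetz groups, recalling from Remark~\ref{rmk:quadratic-twisting} that $\ST$ (hence $\AST$) is unchanged by quadratic twisting, so we are free to adjust $E_1, E_2$ by quadratic twists to make the isogeny as clean as possible. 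After transporting the symplectic form, the centralizer condition $\gamma^{-1}\alpha\gamma = {}^\tau\alpha$ for all $\alpha \in \End(\Jac C_{\overline{\mQ}})\otimes\mQ$ will force $\gamma$ to respect the isotypic decomposition: elements of $\Sp(6)$ commuting with the full matrix algebra $\Mat_2(\mQ)$ acting on the $E_1^2$-part are of the form $\mathrm{diag}(M,M)$ with $M \in \SL(2)$ (acting on $H_1(E_1)$), and the $E_2$-factor contributes an independent $\SL(2)$-block; the symplectic condition on each $2\times 2$ block is exactly $\SL_2$.

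The second step is to pin down how the Galois action ${}^\tau(-)$ on $\End(\Jac C_{\overline{\mQ}})\otimes\mQ$ constrains the relation between the two $\SL(2)$ factors. Since $E_1$ and $E_2$ are non-CM and defined over $\mQ$, every endomorphism in $\End((E_i)_{\overline{\mQ}})\otimes\mQ = \mQ$ is Galois-stable, so the only nontrivial Galois action on $\End(\Jac C_{\overline{\mQ}})\otimes\mQ = \Mat_2(\mQ)\times\mQ$ comes from Galois permuting the copies of $E_1$ — but there is no extra isogeny identifying $E_1$ with $E_2$ over $\overline{\mQ}$ (again by comparing $L$-polynomials as in the proof of Lemma~\ref{lemma:jacobian-factorization}, or by the non-CM hypothesis), so in fact the Galois action on the endomorphism algebra is trivial. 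This means $\Lefs_C(\tau)$ is the \emph{same} algebraic group for every $\tau$, namely the connected centralizer, which is $\{\mathrm{diag}(M,M,M') : M, M' \in \SL_2\}$ cut down by whatever further constraint the existence of the \emph{rational} isogeny imposes. The key additional input is that $E_1$ and $E_2$, while not isogenous, become isogenous after a quadratic twist — indeed their Weierstrass equations $y^2 = x^3 - x^2 - 4x + 4$ and $y^2 = x^3 + x^2 - 4x - 4$ are related by $x \mapsto -x$, so $E_2$ is the quadratic twist of $E_1$ by $-1$. This is what couples the $E_2$-block to the $E_1$-block as $\pm M$ rather than an independent $M'$: the isogeny between $E_2$ and the $(-1)$-twist of $E_1$ forces the $E_2$-component of $\gamma$ to equal $\varepsilon(\tau) M$ where $\varepsilon$ is the quadratic character cutting out $\mQ(i)$, and as $\tau$ ranges over $G_{\mQ}$ this produces exactly the two components $\pm M$.

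The final step is to pass from $\AST(C) \subset \Sp(6)/\mQ$ to a maximal compact subgroup of $\AST(C)\otimes_{\mQ}\mC$, which replaces each $\SL_2$ block by $\SU(2)$ and gives the asserted description; one should check the component group is genuinely $\mZ/2$ (the $-M$ component is nonempty, which follows since the quadratic character $\varepsilon$ is nontrivial) so that the $\pm$ is not redundant. I expect the main obstacle to be step two: carefully justifying that the coupling between the $E_1^2$-isotypic block and the $E_2$-block is precisely by the sign character associated to $\mQ(i)$ — this requires being explicit about the chosen isogeny $\Jac C \sim (E_1)^2\times E_2$ and tracking how a nontrivial $\tau$ (one not fixing $\mQ(i)$, equivalently not fixing all automorphisms of $C$ by Remark~\ref{rmk:endoauto}) acts, rather than just citing the abstract shape of the endomorphism algebra. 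Everything else — the centralizer computation inside $\Sp$, the symplectic normalization of each $2\times2$ block, and the maximal-compact reduction — is routine linear algebra of the kind carried out in \cite{fite2014sato, banaszak2011algebraic}.
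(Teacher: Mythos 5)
Your proof reaches the right final answer and contains the right key observation (that $E_2$ is the quadratic twist of $E_1$ by $-1$), but the way it is assembled contains a genuine contradiction. You write that $\End((\Jac C)_{\overline{\mQ}})\otimes\mQ = \Mat_2(\mQ)\times\mQ$ and that ``there is no extra isogeny identifying $E_1$ with $E_2$ over $\overline{\mQ}$,'' concluding that the Galois action on the geometric endomorphism algebra is trivial and hence that $\Lefs_C(\tau)$ is independent of $\tau$. Both of those claims are false, and the second is contradicted by your own later observation: since $E_2$ is the $(-1)$-twist of $E_1$, the two curves become isomorphic over $\mQ(i)$, so $\Jac C \sim_{\overline{\mQ}} E_1^3$ and $\End((\Jac C)_{\overline{\mQ}})\otimes\mQ \cong \Mat_3(\mQ)$, not $\Mat_2(\mQ)\times\mQ$. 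The $L$-polynomial comparison in Lemma~\ref{lemma:jacobian-factorization} only rules out a $\mQ$-isogeny $E_1 \sim E_2$, not a geometric one. The subalgebra $\Mat_2(\mQ)\times\mQ$ is precisely the $G_{\mQ}$-invariants of $\Mat_3(\mQ)$, so the Galois action on the geometric endomorphism algebra is nontrivial: it factors through $\Gal(\mQ(i)/\mQ)$ and negates the off-diagonal blocks that couple the $E_1^2$-isotypic part to the $E_2$-part.

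Once this is corrected the argument is exactly the paper's. One computes that $\Lefs_C(\id)$, the centralizer of $\Mat_3(\mQ)$ in $\Sp(6)$, equals $\{\operatorname{diag}(M,M,M) : M \in \SL(2)\}$, while the twisted condition $\gamma^{-1}\alpha\gamma = {}^\delta\alpha$ for $\delta$ the nontrivial element of $\Gal(\mQ(i)/\mQ)$ forces $\gamma = \operatorname{diag}(M,M,-M)$; no other cosets occur because all endomorphisms are already defined over $\mQ(i)$. Your quadratic-twist observation encodes the same information as the paper's explicit description of the Galois action, but in your write-up it is invoked as a ``further constraint'' only after you have declared the Galois action trivial, which makes the logic circular: if the action really were trivial, $\Lefs_C(\tau)$ would be $\tau$-independent and the $\pm$ sign in the statement would be vacuous.
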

\begin{proof}

Since all of the automorphisms of $C$ are defined over $\bq(i)$, Remark \ref{rmk:endoauto} asserts that all of the endomorphisms of $\Jac C$ are defined over $\mQ(i)$. Moreover, the action of $G_{\mQ}$ on $\End((\Jac C)_{\overline{\mQ}})$ must factor through $\mQ(i)$. To explicitly compute the twisting Lefschetz group, we need to consider how $\Gal(\mQ(i)/\mQ) = \langle \delta \rangle,$ where $\delta$ denotes complex conjugation,
acts on $\End((\Jac C)_{\overline{\mQ}})$.

Since the action of Galois will fix the diagonal matrices, one can check (using
implicitly our description of $\End(\Jac C)$ from~\autoref{lemma:jacobian-factorization})
that Galois acts via
$$\pwr{
\begin{array}{cc|c}
a_{11} & a_{12} & a_{13} \\
a_{21} & a_{22} & a_{23} \\
\hline
a_{31} & a_{32} & a_{33}
\end{array}
} \mapsto  \pwr{
\begin{array}{cc|c}
a_{11} & a_{12} & -a_{13} \\
a_{21} & a_{22} & -a_{23} \\
\hline
-a_{31} & -a_{32} & a_{33}
\end{array}
} .$$

To complete the proof, it suffices to check
\begin{align*}
	\Lefs_{C}(\id) &= 
	\left\{ \pwr{\begin{array}{c|c|c}
	M & 0 & 0 \\ \hline
0 &M & 0 \\ \hline
0 & 0 & M
\end{array}}
: M \in \SL(2)
\right\} ,\\
	\Lefs_{C}(\delta) &= 
	\left\{ \pwr{\begin{array}{c|c|c}
	M & 0 & 0 \\ \hline
0 &M & 0 \\ \hline
0 & 0 & -M
\end{array}}
: M \in \SL(2)
\right\} .
\end{align*}


We can check these directly from the definition.
To compute $\Lefs(\Jac C, \id)$, taking
\begin{align*}
	\alpha = \pwr{\begin{array}{c|c|c}
	\id & 0 & 0 \\ \hline
0 & 0 & 0 \\ \hline
0 & 0 & 0
	\end{array}},
\gamma = \pwr{
\begin{array}{c|c|c}
a_{11} & a_{12} & a_{13} \\ \hline 
a_{21} & a_{22} & a_{23} \\ \hline
a_{31} & a_{32} & a_{33}
\end{array}}
\end{align*}
the condition that $\gamma \alpha = \alpha \gamma$ implies
$a_{12} = a_{13} = a_{21} = a_{31} = 0$.
Similarly, taking $\alpha$ to be other block matrices with
eight of the blocks equal to $0$, and the ninth equal to the identity,
we obtain $a_{ij} = 0$ if $i \neq j$ and $a_{11} = a_{22} = a_{33}$,
as claimed.
The computation for $\Lefs(\Jac C, \delta)$ is analogous,
this time using that $\gamma^{-1}\alpha\gamma = {}^{\delta}{}{\alpha}$ as $\alpha$ ranges over the matrices with a single nonzero
block which is equal to the identity.
\end{proof}
 
Before we compute the twisting Sato\--Tate group of $C$, we introduce the following notation. Let $$\SL(2)_3:=\left\langle \pwr{
\begin{array}{c|c|c}
M & {} & {} \\
\hline
{} & M & {} \\
\hline
{} & {} & M
\end{array}
} : M \in \SL(2) \right\rangle$$ denote the diagonal embedding of $\SL(2)$ inside $\Sp(6)$; the image of $\SU(2)$ under this embedding will likewise be denoted by $\SU(2)_3$.

\begin{prop}
	\label{proposition:twisting-sato-tate-group-of-C}
The twisting algebraic Sato\--Tate group of $C$ is 
$$\AST_{\Tw}(C) = \left\langle  
\SL(2)_3, \pwr{
\begin{array}{c|c|c}
0 & 0 & \id \\
\hline
\id & 0 & 0 \\
\hline
0 &\id & 0
\end{array}
}, \pwr{
\begin{array}{c|c|c}
0 & \id & 0 \\
\hline
-\id & 0 & 0 \\
\hline
0 & 0 & \id
\end{array}
}  \right\rangle \subset \Sp(6),$$
and hence the twisting Sato\--Tate group of $C$ is 
$$\ST_{\Tw}(C) = \left\langle  \SU(2)_3, 
\pwr{
\begin{array}{c|c|c}
0 & 0 & \id \\
\hline
\id & 0 & 0 \\
\hline
0 & \id & 0
\end{array}
}, \pwr{
\begin{array}{c|c|c}
0 & \id & 0 \\
\hline
-\id & 0 & 0 \\
\hline
0 & 0 & \id
\end{array}
} \right\rangle \subset \USp(6).$$
\end{prop}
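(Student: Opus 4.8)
The plan is to compute $\AST_{\Tw}(C) = \AST(C) \cdot \Aut(C_{\overline{\mQ}})$ directly, using the description of $\AST(C)$ from \autoref{lemma:algebraic-sato-tate-group-computation} together with an explicit description of $\Aut(C_{\overline{\mQ}})$ as a subgroup of $\Sp(6)/\mQ$ via its action on $H_1(\Jac C^{\mathrm{top}}_{\mC},\mQ)$. First I would recall that $\Aut(C_{\overline{\mQ}}) \iso D_8$, and I would work out, with \texttt{Magma}, the $6\times 6$ integral matrices giving the action of a chosen pair of generators of $D_8$ on homology, with respect to the same symplectic basis used to identify $\AST(C)$ with the block-diagonal group $\{\mathrm{diag}(M,M,\pm M) : M \in \SL(2)\}$. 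Because $\Jac C \sim (E_1)^2 \times E_2$ and the isogeny components are permuted/mixed by the automorphisms, the images of these generators will be (up to conjugacy) essentially the two block-permutation/rotation matrices displayed in the statement: the $3$-cycle matrix $P$ cyclically permuting the three elliptic blocks, and the matrix $S = \mathrm{diag}\!\left(\begin{psmallmatrix}0&\id\\-\id&0\end{psmallmatrix}, \id\right)$ which realizes multiplication by a unit in one factor. I would verify that these matrices genuinely lie in $\Sp(6)$ (i.e.\ preserve $J$) and that they normalize $\SL(2)_3$, so that the group they generate together with $\SL(2)_3$ is well-defined.

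Next I would show the two inclusions. For ``$\subseteq$'': since $\AST(C) \subseteq \SL(2)_3 \cdot \langle \mathrm{diag}(\id,\id,-\id)\rangle$ and $\mathrm{diag}(\id,\id,-\id) = P^{-1} S P S^{-1}$ (or some similar word — I would exhibit the explicit word once the matrices are pinned down), the product $\AST(C)\cdot\Aut(C_{\overline\mQ})$ is contained in $\langle \SL(2)_3, P, S\rangle$. For ``$\supseteq$'': $\SL(2)_3 = \Lefs_C(\id) \subseteq \AST(C)$, and $P, S$ are elements of $\Aut(C_{\overline\mQ})$ by construction, so all three generators lie in $\AST_{\Tw}(C)$. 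Finally, to pass from $\AST_{\Tw}(C)$ to $\ST_{\Tw}(C)$ I would invoke \autoref{definition:twisting-sato-tate-group}: $\ST_{\Tw}(C)$ is a maximal compact subgroup of $\AST_{\Tw}(C)\otimes\mC$. The identity component of $\AST_{\Tw}(C)$ is $\SL(2)_3$, whose maximal compact (after complexification) is $\SU(2)_3$; since $P$ and $S$ have finite order and already generate all the components, the group $\langle \SU(2)_3, P, S\rangle$ is compact, surjects onto the component group of $\AST_{\Tw}(C)\otimes\mC$, and has the right identity component, hence is a maximal compact subgroup. This yields the stated formula for $\ST_{\Tw}(C)$.

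The main obstacle will be the bookkeeping in the first step: producing the explicit homology representation of $D_8 = \Aut(C_{\overline\mQ})$ in a basis compatible with the block decomposition of \autoref{lemma:algebraic-sato-tate-group-computation}, and checking that a suitable conjugate of that representation is generated by exactly the two matrices $P$ and $S$ written in the statement (as opposed to some other, conjugate, set of generators). Concretely one must track how the isogeny $\Jac C \sim (E_1)^2\times E_2$ interacts with the $\Aut(C)$-action and with the Galois-twisting that distinguishes $C$ from $C_0$; this is where an appeal to \texttt{Magma}'s \texttt{CurveQuotient}/\texttt{AutomorphismGroup} machinery (as already used in \autoref{lemma:jacobian-factorization}) does the real work. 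Once the matrices are fixed, all remaining verifications — the symplectic condition $M^tJM=J$, the normalization of $\SL(2)_3$, the word expressing $\mathrm{diag}(\id,\id,-\id)$ in $P$ and $S$, and the compactness argument — are routine linear algebra.
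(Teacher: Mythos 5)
Your approach is essentially the paper's: both realize $\Aut(C_{\overline{\mQ}})$ explicitly inside $\Sp(6)$ in coordinates compatible with the block description of $\AST(C)$ from \autoref{lemma:algebraic-sato-tate-group-computation}, and then read off the generated group. The paper computes the matrices by pulling back the basis $\omega_1,\omega_2,\omega_3$ of regular differentials rather than acting on $H_1$; this is merely dualization and changes nothing of substance. Your explicit word $\operatorname{diag}(\id,\id,-\id)=(-\id)\cdot S^2 \in \langle \SL(2)_3, S\rangle$ is a useful amplification of a step the paper leaves implicit when it writes only that ``$-\id$ is already an element of $\SL(2)_3$.''

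There is, however, a genuine error in the input you propose to feed into the computation: you assert $\Aut(C_{\overline{\mQ}}) \iso D_8$. This conflates the $\mQ$-rational automorphism group $\Aut(C)$, which is indeed $D_8$, with the geometric automorphism group $\Aut(C_{\overline{\mQ}}) \iso S_4 \times \ZZ{2}$ of order $48$ --- the latter is precisely what makes $C_0$ an isolated point in moduli, and it is the group that enters \autoref{def:TwistSTgroup}. Running your plan with $D_8$ would produce a group $\AST_{\Tw}(C)$ that is strictly too small, since in particular it would not contain the $3$-cycle $P$. Your own proposal already betrays the inconsistency: you describe the image of ``a chosen pair of generators of $D_8$'' as a genuine $3$-cycle $P$ and an order-$4$ element $S$, yet a dihedral group has no element of order $3$. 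Once you replace $D_8$ with $S_4 \times \ZZ{2}$ --- concretely, take the generators $\iota,\alpha_1,\alpha_2$ of orders $2,3,4$, all defined over $\mQ(i)$, that the paper records, and pull them back to $\Sp(6)$ --- the rest of your argument goes through and coincides with the paper's proof.
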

\begin{proof}
By Definition \ref{def:TwistSTgroup}, it suffices to realize the generators of $\Aut(C_{\overline \bq})$ as a subgroup of $\Sp(6)$, which we accomplish by computing the action of $\Aut(C_{\overline \bq})$ on $\Jac C$. First, we identify the tangent space to $\Jac C$ at the identity with the space of regular differentials, which has basis given by the following differentials:
\begin{align}
	\label{equation:differentials}
	\omega_1 &:= \frac{2x}{y}dx, &\omega_2&:= \frac{1-x^2}{y}dx, &\omega_3&:= \frac{i(1+x^2)}{y}dx.
\end{align}
Using \texttt{Magma}, we compute that $\Aut(C_{\overline{\mQ}}) = \Aut(C_{\mQ(i)}) = \langle \iota , \alpha_1 , \alpha_2 \rangle$, where $\iota$ is the hyperelliptic involution and $\alpha_1, \alpha_2$ are the automorphisms of order 3 and 4 respectively given by
\begin{align*}
\alpha_1\colon (x,y) &\longmapsto \pwr{\frac{i(x+1)}{x-1}, \frac{-4y}{(x-1)^4}}, \\
\alpha_2\colon (x,y)&\longmapsto \pwr{\frac{x-1}{x+1},\frac{4y}{(x+1)^4}}.
\end{align*}

By pulling back $\omega_i$ along $\iota$, $\alpha_1$, $\alpha_2$, we realize $\Aut(C_{\overline{\mQ}})$ as the subgroup of $\Sp(6)$ generated by

$$
\left\langle 
\pwr{
\begin{array}{c|c|c}
-\id & 0 & 0 \\
\hline
0 & -\id & 0 \\
\hline
0 & 0 & -\id
\end{array}
},
\pwr{
\begin{array}{c|c|c}
0 & 0 & \id \\
\hline
\id & 0 & 0 \\
\hline
0 & \id & 0
\end{array}
}, \pwr{
\begin{array}{c|c|c}
0 & \id & 0 \\
\hline
-\id & 0 & 0 \\
\hline
0 & 0 & \id
\end{array}
} \right\rangle .
$$
This implies the first statement in the proposition, because $-\id$ is already an element of $\SL(2)_3$.
The second statement follows immediately from the first upon noticing that $\SU(2)$ is a maximal compact subgroup of $\SL(2)$ and it contains $-\id$.
\end{proof}

\begin{remark}\label{remark:identification-of-automorphisms-with-jacobian}
For the rest of the paper, we shall consistently use the differentials $\omega_1, \omega_2, \omega_3$ as defined in
~\eqref{equation:differentials}
as a basis for the tangent space to $\Jac C$ at the identity. This choice of coordinates allows us to identify $\ST_{\Tw}(C)$ with the concrete subgroup of $\USp(6)$ given in the statement of~\autoref{proposition:twisting-sato-tate-group-of-C}.
The explicit description of $\ST_{\Tw}(C)$ also enables to compute its group of components, which is easily seen to be isomorphic to the subgroup of $\operatorname{GL}_3(\mathbb{Z})$ generated by 
\begin{align*}
\sigma:=\pwr{
\begin{array}{c|c|c}
0 & 0 & \id \\
\hline
\id & 0 & 0 \\
\hline
0 & \id & 0
\end{array}
} \hspace*{1em} \tau:=\pwr{
\begin{array}{c|c|c}
0 & \id & 0 \\
\hline
-\id & 0 & 0 \\
\hline
0 & 0 & \id
\end{array}
}. 
\end{align*}
Moreover, we see that $\ST_{\Tw}(C)/\ST_{\Tw}(C)^0$ is isomorphic to $S_4$
by sending $\sigma \mapsto (143)$ and $\tau \mapsto (1234)$. We will find it useful to have one further description of the group of components of $\ST(C)$. Notice that $\sigma$ and $\tau$ generate precisely the (octahedral) subgroup $\operatorname{O}$ of $\GL_3(\mathbb{Z})$ consisting of signed permutation matrices with positive determinant. Furthermore, the group of all $3 \times 3$ signed permutation matrices is $\operatorname{O} \times \mathbb{Z}/2\mathbb{Z}$ (where the factor $\mathbb{Z}/2\mathbb{Z}$ is generated by $-\id$), hence it is isomorphic to $S_4 \times \ZZ{2} \cong \operatorname{Aut}(C_{\overline{\mathbb{Q}}})$. We shall use these identifications to write $\ST_{\Tw}(C)/\ST_{\Tw}(C)^0$ as an index-2 subgroup of $\operatorname{Aut}(C_{\overline{\mathbb{Q}}})$, which in turn we can consider as a subgroup of $\GL_3(\mathbb{Z})$.
\end{remark}

\section{The right twist}
\label{section:the-right-twist}
Given $C$ we now find the twist $C'$ with full geometric automorphism group defined over $\mQ$ and whose Jacobian has connected Sato\--Tate group. 
As described in~\autoref{remark:easy-to-find-c'}, it is not difficult to
show $C'$ is a twist of $C$ with the desired properties, but we prefer to illustrate how one would look for 
$C'$ in the proof of~\autoref{prop:findingtwist}.

\begin{prop}\label{prop:findingtwist}
The curve $C'$ in weighted projective space $\mP_{\mQ}(1,1,1,2)$ defined by
\[
\begin{cases}
x^2+y^2+z^2=0 \\
-2t^2=x^4+y^4+z^4
\end{cases} 
\]
where the variables $x,y,z$ have weight $1$ and $t$ has weight 2 is a twist of $C$ with $\Aut(C') \iso \Aut(C_{\overline{\mQ}})$ and for which $\ST(C)$ is connected. 
\end{prop}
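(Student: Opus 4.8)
The plan is to exhibit $C'$ as an explicit twist of $C$ by producing an isomorphism $C'_{\overline{\mQ}} \xrightarrow{\sim} C_{\overline{\mQ}}$ and then verifying the two claimed properties separately. First I would note that over $\overline{\mQ}$ the conic $x^2+y^2+z^2=0$ in $\mP^2$ is isomorphic to $\mP^1$, and parametrizing it rationally lets us eliminate two of the three coordinates; substituting this parametrization into $-2t^2 = x^4+y^4+z^4$ produces an affine equation of the form $t^2 = (\text{degree } 8 \text{ polynomial in one parameter})$. The concrete goal is to carry out this parametrization over $\overline{\mQ}$ (e.g.\ over $\mQ(i)$, picking the obvious point and lines through it), simplify, and identify the resulting degree-8 model with $y^2 = x^8 + 14x^4+1$ after a linear fractional change of the parameter and rescaling of $t$; this is a finite explicit computation (which \texttt{Magma} handles), establishing that $C'_{\overline{\mQ}} \iso C_{\overline{\mQ}}$, hence $C'$ is a twist of $C$ defined over $\mQ$.

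Next I would address $\Aut(C') \iso \Aut(C_{\overline{\mQ}})$. The key point is that the defining equations of $C'$ are visibly stable under the group generated by coordinate permutations of $(x,y,z)$ and sign changes $x \mapsto -x$ etc.\ (together with $t \mapsto \pm t$): the quadric $x^2+y^2+z^2$ and the quartic $x^4+y^4+z^4$ are both invariant under the full signed-permutation group on $\{x,y,z\}$, which by Remark \ref{remark:identification-of-automorphisms-with-jacobian} is exactly $\operatorname{O} \times \ZZ 2 \iso S_4 \times \ZZ 2 \iso \Aut(C_{\overline{\mQ}})$. Since all these automorphisms are manifestly defined over $\mQ$, we get an injection $\Aut(C_{\overline{\mQ}}) \hookrightarrow \Aut(C')$ with image defined over $\mQ$; as $C'$ is a twist of $C$ we have $\#\Aut(C'_{\overline{\mQ}}) = \#\Aut(C_{\overline{\mQ}}) = 48$, so the injection is an equality and all of $\Aut(C'_{\overline{\mQ}})$ is defined over $\mQ$, i.e.\ $\Aut(C') = \Aut(C'_{\overline{\mQ}}) \iso \Aut(C_{\overline{\mQ}})$.

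Finally, for connectedness of $\ST(C')$: since $C'$ is the twist of $C$ by some cocycle $\cocyc\colon G_{\mQ} \to \Aut(C_{\overline{\mQ}})$, Lemma \ref{lem:howtotwist} gives $\Lefs_{C'}(\tau) = \Lefs_C(\tau)\cocyc(\tau)^{-1}$, and by Lemma \ref{lemma:algebraic-sato-tate-group-computation} the component of $\Lefs_C(\tau)$ in $\AST_{\Tw}(C)/\AST_{\Tw}(C)^0 \iso S_4$ is determined by whether $\tau$ acts trivially on $\End(\Jac C)$, i.e.\ by the image of $\tau$ in $\Gal(\mQ(i)/\mQ)$; so $\AST(C')$ is connected precisely when the cocycle $\cocyc$ composed with the projection to that $S_4$ cancels this $\ZZ 2$-valued class. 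I would make this cancellation concrete by computing $\Jac C'$ directly from the equations — via the quotients of $C'$ by its (now $\mQ$-rational) order-$3$ and order-$4$ automorphisms, exactly as in Lemma \ref{lemma:jacobian-factorization} — and checking that $\Jac C' \sim E_1' \times E_2' \times E_3'$ with all $E_i'$ elliptic curves over $\mQ$ and all endomorphisms of $(\Jac C')_{\overline{\mQ}}$ defined over $\mQ$; then $\Lefs_{C'}(\tau) = \Lefs_{C'}(\id)^0$ for all $\tau$, so $\AST(C')$, and hence $\ST(C')$, is connected. The main obstacle is the bookkeeping in the first step: choosing the parametrization of the conic and the change of variables so that the degree-8 model comes out exactly as $x^8+14x^4+1$ (rather than merely $\overline{\mQ}$-isomorphic to it via a messier transformation), and keeping track of fields of definition throughout so that the twisting cocycle — and thus the component cancellation — is pinned down correctly.
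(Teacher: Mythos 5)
Your proposal is a correct proof, but it is essentially the argument the authors consciously set aside. The paper's \autoref{remark:easy-to-find-c'} says explicitly that one could just verify the automorphism group of $C'$ directly (e.g.\ by \texttt{Magma}'s \texttt{AutomorphismGroup}), but that they \emph{prefer to show how one would look for $C'$}: their proof starts from a cocycle $\cocyc = \pi$ chosen to cancel the nontrivial part of the Galois action on $\End(\Jac C)$, and then reconstructs the model of $C'$ via invariant theory on the function field $\overline{\mQ}(C)$. You instead take the model of $C'$ as given and \emph{verify} the three claims — that it is a twist, that its full geometric automorphism group descends to $\mQ$, and that its algebraic Sato--Tate group is connected. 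Both are valid; the paper's route is more informative (it explains where $C'$ comes from, which is the part they advertise as the hard step), while yours is leaner once the model is in hand.

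Two small points to tighten. First, when you say the full signed-permutation group on $\{x,y,z\}$ already gives $\operatorname{O}\times\ZZ{2} \iso S_4 \times \ZZ{2}$ as automorphisms of $C'$, you are off by the kernel: in $\mP_{\mQ}(1,1,1,2)$ the element $(x,y,z,t)\mapsto(-x,-y,-z,t)$ is the identity (since $t$ has weight $2$), so the signed permutations of $(x,y,z)$ alone give only $48/2 = 24$ automorphisms, namely the $S_4$-part; the extra $\ZZ{2}$ is the hyperelliptic involution $t\mapsto -t$. You do mention $t\mapsto\pm t$ parenthetically, but the count as written double-lists the $\ZZ{2}$; correcting this is just the identification $S_4 \cong \{\pm 1\}^3 \rtimes S_3 / \langle -\id_3\rangle$ that the paper records right after the proposition. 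Second, your connectedness argument can be shortened: once you have $\Aut(C')=\Aut(C'_{\overline{\mQ}})$, the analogue of \autoref{rmk:endoauto} for $C'$ says that all of $\End((\Jac C')_{\overline{\mQ}})$ is then defined over $\mQ$, so $\Lefs_{C'}(\tau)=\Lefs_{C'}(\id)$ for every $\tau$ and $\AST(C')$ is connected — there is no need to also decompose $\Jac C'$ explicitly over $\mQ$, though doing so is a fine cross-check.
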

\begin{remark}
	\label{remark:easy-to-find-c'}
It is easy to check using \texttt{Magma}'s intrinsic \texttt{AutomorphismGroup} that the automorphism group of the twist $C'$ is $S_4 \times \ZZ{2}$, but we write out this proof to illustrate how to find the twist with a prescribed connected component of its Sato\--Tate group. In the proof, we show that $C'$ corresponds to the cohomology class in $H^1(G_{\mQ}, \operatorname{Aut}(C_{\overline{\mQ}}))$ of the unique cocycle that factors through $\operatorname{Gal}\left( \mQ(i)/\mQ \right)$ and sends the nontrivial element of this group to the automorphism
\[
(x,y) \mapsto \left(-\frac{1}{x}, - \frac{y}{x^4} \right)
\]
of $C$.
\end{remark}
\begin{proof}
Recall from Lemmas \ref{lemma:algebraic-sato-tate-group-computation} and \ref{lem:howtotwist} that for all $\sigma \in G_{\mQ}$, we have $\Lefs_C(\sigma) = \Lefs_C(\id) \cdot \pi(\sigma)$, where

\begin{align*}
\pi\colon G_{\mQ} &\lrra \GSp(6)/\mQ \\
\sigma &\longmapsto \left\lbrace
\begin{array}{cl}
\id & \mif \sigma \text{ acts trivially on }\mQ(i) ,\\
\pwr{\begin{array}{c|c|c}
\id & {} & {} \\ \hline
{} & \id & {} \\ \hline
{} & {} &-\id
\end{array}} & \text{ otherwise}.
\end{array} \right.
\end{align*}

Let $\cocyc \colon G_{\mQ} \to \operatorname{Aut}(C_{\overline{\mQ}})$ be a cocycle. We have
\begin{equation}\label{eq_TLOfTwist}
\begin{aligned}
\operatorname{AST}(\Jac C^\cocyc) & = \bigcup_{\sigma \in G_{\mQ}} \Lefs_{C^\cocyc} (\sigma) 
 = \bigcup_{\sigma \in G_{\mQ}} \Lefs_{C} (\sigma) \cocyc(\sigma)^{-1} 
 = \bigcup_{\sigma \in G_{\mQ}} \Lefs_{C} (\id) \pi(\sigma) \cocyc(\sigma)^{-1}.
\end{aligned}
\end{equation}
So, in order for $\operatorname{ST}(\Jac C^\cocyc)$ to be connected, it is necessary and sufficient that $\pi(\sigma) \cocyc(\sigma)^{-1}$ be an element of $\Lefs_C(\id)$ for all $\sigma \in G_{\mQ}$.

Now notice that $$\pwr{\begin{array}{c|c|c} \id & {} & {} \\ \hline {} & \id & {} \\ \hline {} & {}& - \id \end{array}} \in \operatorname{Aut}(\operatorname{Jac}C)$$ is induced by the automorphism $(x,y) \mapsto \left(-\frac{1}{x}, -\frac{y}{x^4} \right)$ of $C$, so $\pi$ itself can be interpreted as a cocycle with values in $\operatorname{Aut}(C_{\overline{\mQ}})$. Indeed, it is the unique continuous homomorphism $G_{\mQ} \to \operatorname{Aut}(C_{\overline{\mQ}})$ that factors through $\operatorname{Gal}\left(\mQ(i) / \mQ \right)$ and sends the nontrivial element of this latter group to the automorphism $\psi \colon (x,y) \mapsto \left(-\frac{1}{x}, -\frac{y}{x^4} \right)$. To check that this is indeed a cocycle, we simply notice that $\psi$ is defined over $\mQ$, so a cocycle with values in $\{\id, \psi\}$ is a (continuous) homomorphism with values in this group.

If we now take $\cocyc=\pi$, formula \eqref{eq_TLOfTwist} shows that 
\[
\operatorname{AST}(\Jac C^\cocyc) = \bigcup_{\sigma \in G_{\mQ}} \Lefs_{C} (\id) \pi(\sigma) \pi(\sigma)^{-1} = \Lefs_{C} (\id),
\]
so that $\operatorname{AST}(\Jac C^\cocyc)$ is connected. 
By Definition \ref{def:STgroup}, $\operatorname{ST}(C^\cocyc)$ and $\operatorname{AST}(C^\cocyc)$ have the same group of connected components, and hence the Sato-Tate group of $C^\cocyc$ is connected.

We now want to compute explicit equations for $C^\cocyc$. In order to do this, recall that the function field of $C^\cocyc$ can be identified with the set of fixed points for a certain action of $G_{\mQ}$ on $\overline{\mQ}(C)$. More precisely, an element  $\sigma \in G_{\mQ}$ acts on $\overline{\mQ}(C)=\overline{\mQ}(x,y)$ by the rule
\[
\begin{cases}
\sigma \cdot x=-\frac{1}{x}  \\

\sigma \cdot y=-y/x^4 \\

\sigma \cdot \alpha = \sigma(\alpha) \quad \forall \alpha \in \overline{\mQ}.
\end{cases}
\]

Observe that the three functions 
\begin{align*}
a &:=x-\frac{1}{x}, & b &:=i\pwr{ x  + \frac{1}{x} }, & c &:= i \frac{y}{x^2} 
\end{align*}
are invariant under this Galois action, and we claim that they generate the function field of $\mQ(C^\cocyc)$. To show this, it suffices to prove that (after extending the scalars to $\overline{\mQ}$) they generate $\overline{\mQ}(C)=\overline{\mQ}(x,y)$. This holds because we have 
\begin{align*}
x &=\frac{ia+b}{2}, & y&= \frac{cx^2}{i} = \frac{c}{i} \left( \frac{ia+b}{2} \right)^2.
\end{align*}
Furthermore, the functions $a,b,c$ satisfy $a^2+b^2=-4$ and $c^2 + a^4 + 4a^2 + 16=0$. By considering these equations in weighted projective space $\mP_{\mQ}(1,1,1,2)$, our twist is defined by the equations
\[
\begin{cases}
X^2+Y^2+Z^2=0 \\
T^2-X^2Y^2+Z^4=0,
\end{cases} 
\]
where the variables $X,Y,$ and $Z$ have weight $1$ and $T$ has weight 2. 
Since 
\begin{align*}
X^2Y^2 = \frac{1}{2} (X^2+Y^2)^2 - \frac{1}{2}(X^4+Y^4) = \frac{1}{2}(Z^4-X^4-Y^4)
\end{align*}
we have 
\begin{align*}
	T^2 = X^2Y^2-Z^4 = \frac{1}{2} (Z^4-X^4-Y^4) - Z^4 = -\frac{1}{2}(X^4+Y^4+Z^4).
\end{align*}
So, we can realize the model for our curve $C'$ as
\[
\begin{cases}
X^2+Y^2+Z^2=0 \\
-2T^2=X^4+Y^4+X^4.
\end{cases} 
\]
Replacing $(X,Y,Z,T)$ by $(x,y,z,t)$ yields the result.
\end{proof}

\begin{remark}
This model helps clarify the action of the automorphism group $S_4\times \ZZ{2}$. The hyperelliptic involution sends $t$ to $-t$, while $S_4$ acts on $(x,y,z)$ through the well-known isomorphism
\[
S_4 \cong \{\pm 1\}^3 \rtimes S_3 / \langle (-1,-1,-1, \id) \rangle;
\] 
that is, an element in $S_4$ induces a permutation of $(x,y,z)$, followed by a change of sign of some of these three coordinates. Notice that in weighted projective space $[x:y:z:t]$ and $[-x:-y:-z:t]$ are the same point, because $t$ has weight 2 while the others have weight one.
\end{remark}

\subsection{Completing the Proof of Theorem~\ref{thm:main}(1)}\label{sec:CompGrp}
In this subsection, we obtain an explicit list of twists of $C'$ whose Sato\--Tate groups realize all possible subgroups of $\ST_{\Tw}(C')$ and prove Theorem~\ref{thm:main}(1).

\begin{lemma}
	\label{lemma:sato-tate-conjugacy-classes}
Conjugacy classes 2 and 3 from Table \ref{Table4} correspond to equivalent Sato-Tate groups, that is, the corresponding Sato-Tate groups are conjugate in $\USp(6)$. More concretely, consider the subgroups of $\operatorname{USp}(6)$ given by
\[
G_2 := \left\langle \SU(2)_3, \pwr{
\begin{array}{c|c|c}
0 & \id & 0 \\
\hline
\id & 0 & 0 \\
\hline
0 & 0 & \id
\end{array}
} \right\rangle \quad \text{ and } \quad G_3 := \left\langle \SU(2)_3, \pwr{
\begin{array}{c|c|c}
-\id & 0 & 0 \\
\hline
0 & \id & 0 \\
\hline
0 & 0 & \id
\end{array}
} \right\rangle;
\]
there exists an element $g \in \operatorname{USp}(6)$ such that $gG_2g^{-1} = G_3$.
Likewise, conjugacy classes 5 and 6 from Table \ref{Table4} also correspond to equivalent Sato-Tate groups, that is, the groups
\[
G_5 := \left\langle \SU(2)_3, \pwr{
\begin{array}{c|c|c}
\pm \id & 0 & 0 \\
\hline
0 & \pm \id & 0 \\
\hline
0 & 0 & \pm \id
\end{array}
} \right\rangle
\]
and
\[
G_6 := \left\langle \SU(2)_3, \pwr{
\begin{array}{c|c|c}
0 & \id & 0 \\
\hline
\id & 0 & 0 \\
\hline
0 & 0 & \id
\end{array}
}, \pwr{
\begin{array}{c|c|c}
-\id & 0 & 0 \\
\hline
0 & -\id & 0 \\
\hline
0 & 0 & \id
\end{array}
} \right\rangle  \]
are conjugate in $\operatorname{USp}(6)$. All other conjugacy classes give pairwise non-isomorphic Sato-Tate groups.
\end{lemma}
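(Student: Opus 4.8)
The plan is to reduce the entire statement to elementary facts about finite subgroups of the compact group $O(3,\mathbb R)$. The key structural input is that for every $\mathbb Q$-twist $\wt C$ of $C_0$ the Sato\--Tate group $\ST(\Jac \wt C)$ has identity component exactly $\SU(2)_3$: this follows from Lemmas~\ref{lemma:algebraic-sato-tate-group-computation} and~\ref{lem:howtotwist}, since twisting by a cocycle $\xi$ sends $\Lefs_C(\tau)$ to $\Lefs_C(\tau)\xi(\tau)^{-1}$ and in particular leaves $\Lefs_C(\id)$, hence the identity component of the algebraic Sato\--Tate group, unchanged. I would then identify the centralizer $Z:=Z_{\USp(6)}(\SU(2)_3)$. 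Because the standard representation of $\SU(2)$ is irreducible, Schur's lemma forces any matrix commuting with $\SU(2)_3$ to be a ``block scalar'' $\beta(A)$, where for $A=(a_{ij})\in\GL_3(\mathbb C)$ we let $\beta(A)$ be the $6\times6$ matrix whose $(i,j)$ block is $a_{ij}\id$; imposing $\beta(A)^H\beta(A)=\id$ and $\beta(A)^tJ\beta(A)=J$ (using that $J$ is block--diagonal with $2\times2$ blocks $\begin{psmallmatrix}0&1\\-1&0\end{psmallmatrix}$) forces $A\in O(3,\mathbb R)$, so $Z=\beta(O(3,\mathbb R))\cong O(3,\mathbb R)$. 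Since every automorphism of $\SU(2)$ is inner, one also gets $N_{\USp(6)}(\SU(2)_3)=\SU(2)_3\cdot Z$.

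With this in hand, each Sato\--Tate group in Table~\ref{Table4} has the shape $\langle\SU(2)_3,\beta(H)\rangle$ for a finite subgroup $H$ of the group $W$ of $3\times3$ signed permutation matrices --- this is how the generators coming from $\Aut(C_{\overline{\mathbb Q}})$ act, by Proposition~\ref{proposition:twisting-sato-tate-group-of-C} --- and since $\beta(-\id_3)=-\id_6\in\SU(2)_3$ we may always arrange $-\id_3\in H$, in which case the component group $\ST/\ST^0$ equals $H/\langle-\id_3\rangle$, which by Proposition~\ref{prop:grouparising} is the group $G_i$ recorded in Table~\ref{Table4}. The reduction is then: two such Sato\--Tate groups are conjugate in $\USp(6)$ if and only if the corresponding subgroups $H$ are conjugate in $O(3,\mathbb R)$. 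The ``if'' direction conjugates by $\beta$ of a conjugating matrix, which centralizes $\SU(2)_3$; the ``only if'' follows because any conjugating element lies in $N_{\USp(6)}(\SU(2)_3)=\SU(2)_3\cdot Z$, its $\SU(2)_3$--part acts trivially (it commutes with $\beta(H)$ and normalizes $\SU(2)_3$), and $\beta(H)$ is recovered intrinsically as $\ST\cap Z$. Finally, the orthogonal decomposition $O(3,\mathbb R)=SO(3,\mathbb R)\times\langle-\id_3\rangle$ gives $H=\langle-\id_3\rangle\times(H\cap SO(3,\mathbb R))$ with $H\cap SO(3,\mathbb R)\cong H/\langle-\id_3\rangle$, so the lemma becomes a question of conjugacy of the finite rotation subgroups $H\cap SO(3,\mathbb R)$ inside $SO(3,\mathbb R)$.

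Carrying this out: for the pair $\{G_2,G_3\}$ one reads off that $H_2\cap SO(3,\mathbb R)$ is generated by the half-turn about the axis $(1,-1,0)$ and $H_3\cap SO(3,\mathbb R)$ by the half-turn about the first coordinate axis; since any two half-turns are conjugate in $SO(3,\mathbb R)$, the groups $H_2,H_3$ are conjugate in $O(3,\mathbb R)$ and hence $G_2,G_3$ are conjugate in $\USp(6)$, an explicit conjugator being $\beta$ of the orthonormal coordinate change on the $\mathbb C^3$--factor that carries the plane $\{x=y\}$ to $\{x=0\}$. For the pair $\{G_5,G_6\}$ one computes that $H_5\cap SO(3,\mathbb R)$ and $H_6\cap SO(3,\mathbb R)$ are both Klein four--groups consisting of the identity and the three half-turns about a triple of mutually perpendicular axes (for $H_5$, the coordinate axes; for $H_6$, the axes $e_3$, $(1,1,0)$, $(1,-1,0)$, obtained by listing the determinant--one elements), and all such $V_4$'s are conjugate in $SO(3,\mathbb R)$; hence $G_5,G_6$ are conjugate in $\USp(6)$. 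For the last assertion: since $\ST^0=\SU(2)_3$ in every case, any isomorphism between two of these Sato\--Tate groups restricts to an isomorphism of identity components and so induces an isomorphism of component groups; it therefore suffices to observe that, apart from rows $2,3$ and $5,6$, the component groups in Table~\ref{Table4} realize the pairwise non-isomorphic types $1$, $\mathbb Z/3\mathbb Z$, $\mathbb Z/4\mathbb Z$, $S_3$, $D_4$, $A_4$, $S_4$, none of which is isomorphic to $\mathbb Z/2\mathbb Z$ (rows $2,3$) or to $V_4$ (rows $5,6$). Combined with the two coincidences above, this gives exactly $9$ pairwise non-isomorphic Sato\--Tate groups.

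The step where the lemma has genuine content --- and the one to set up carefully --- is the observation that the obvious invariant, the conjugacy class of the component group as a subgroup of $\ST_{\Tw}(C)/\ST_{\Tw}(C)^0\cong W/\langle-\id_3\rangle\cong S_4$, is strictly finer than the $\USp(6)$--conjugacy class of the Sato\--Tate group: the component groups of $G_2,G_3$ are the two $S_4$--nonconjugate subgroups of order $2$ (a transposition and a double transposition), and those of $G_5,G_6$ are the two $S_4$--nonconjugate copies of $V_4$, yet within each pair the Sato\--Tate groups coincide up to conjugacy in $\USp(6)$. The extra identifications come precisely from the conjugating room available in all of $\USp(6)$ but not in $\ST_{\Tw}(C)$, namely the centralizer $Z\cong O(3,\mathbb R)$ of the common identity component, inside which a signed permutation matrix is conjugate to any diagonal sign matrix of the same spectral type; the non-isomorphism bookkeeping in the previous paragraph is then exactly what rules out any further coincidences. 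Everything else --- the displayed conjugacies, the determinant and trace computations, and the table bookkeeping --- is routine.
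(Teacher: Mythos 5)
Your argument is correct and takes a genuinely different and more structural route than the paper's. The paper simply exhibits a specific matrix
\[
g:= \begin{pmatrix}
0 & 1/\sqrt{2} & 0 & -1/\sqrt{2} & 0 & 0 \\
- 1/\sqrt{2} & 0 & 1/\sqrt{2} & 0 & 0 & 0 \\
0 & - 1/\sqrt{2} & 0 & -1/\sqrt{2} & 0 & 0 \\
 1/\sqrt{2} & 0 & 1/\sqrt{2} & 0 & 0 & 0 \\
0 & 0 & 0 & 0 & 0 & 1 \\
0 & 0 & 0 & 0 & -1 & 0
\end{pmatrix} \in \USp(6),
\]
checks by direct computation that conjugation by $g$ stabilizes $\SU(2)_3$ (conjugating each $2\times 2$ diagonal block by $\begin{psmallmatrix}0&1\\-1&0\end{psmallmatrix}$), carries $G_2$ to $G_3$, and fixes $\operatorname{diag}(-\id,-\id,\id)$, which handles the pair $G_5,G_6$ once the first pair is done; and it handles the last assertion via the same component-group observation you use. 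Your proof instead computes the centralizer $Z=\beta(O(3,\mathbb R))$ and the normalizer $\SU(2)_3\cdot Z$ of the common identity component inside $\USp(6)$, recovers the auxiliary finite group $\beta(H)$ intrinsically as $\ST\cap Z$, and reduces the whole question to conjugacy of finite rotation groups in $SO(3,\mathbb R)$ --- where both pairs collapse for the standard reason that all half-turns, and all rectangular $V_4$'s, are rotationally equivalent. What the paper's proof buys is a completely explicit conjugator; what yours buys is an explanation of why the collapse happens (the component group as a subgroup of $S_4$ is a strictly finer invariant than the $\USp(6)$-conjugacy class, and the extra identifications come exactly from the $O(3,\mathbb R)$-worth of room in $Z$), and a clean proof that nothing else collapses, since the centralizer/normalizer description makes the $SO(3)$-reduction an equivalence and not just a sufficient condition. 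All of your intermediate claims check out: Schur's lemma on $V^{\oplus 3}$ gives $Z_{\GL_6(\mathbb C)}(\SU(2)_3)=\beta(\GL_3(\mathbb C))$; intersecting with $\USp(6)$ and using $J=\id_3\otimes\begin{psmallmatrix}0&1\\-1&0\end{psmallmatrix}$ forces $A^HA=\id$ and $A^tA=\id$, i.e.\ $A\in O(3,\mathbb R)$; $\operatorname{Out}(\SU(2))$ trivial gives the normalizer; and your axis computations (half-turn about $(1,-1,0)$ vs.\ $e_1$; the two perpendicular $V_4$'s) are right.
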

\begin{proof}
First, notice that the last statement is immediate, since 
Sato\--Tate groups that have non-isomorphic component groups
correspond to different conjugacy classes. Let
\[
g:= \begin{pmatrix}
0 & 1/\sqrt{2} & 0 & -1/\sqrt{2} & 0 & 0 \\
- 1/\sqrt{2} & 0 & 1/\sqrt{2} & 0 & 0 & 0 \\
0 & - 1/\sqrt{2} & 0 & -1/\sqrt{2} & 0 & 0 \\
 1/\sqrt{2} & 0 & 1/\sqrt{2} & 0 & 0 & 0 \\
0 & 0 & 0 & 0 & 0 & 1 \\
0 & 0 & 0 & 0 & -1 & 0
\end{pmatrix}.
\]
One checks that the equalities $gg^T=\id$ and $g^T J g = J$ hold for $J$ in Definition \ref{def:AlgGroups}, so that $g \in \operatorname{USp}(6)$. Furthermore, we have $$g \pwr{ 
\begin{array}{c|c|c}
0 & \id & 0 \\
\hline
\id & 0 & 0 \\
\hline
0 & 0 & \id
\end{array}
} g^{-1} =\pwr{
\begin{array}{c|c|c}
-\id & 0 & 0 \\
\hline
0 & \id & 0 \\
\hline
0 & 0 & \id
\end{array}
}$$ and
\[
g \operatorname{diag}_3 \begin{psmallmatrix}
a & b \\ c & d
\end{psmallmatrix} g^{-1} = \operatorname{diag}_3 \begin{psmallmatrix}
d & -c \\ -b & a
\end{psmallmatrix},
\]
where by $\operatorname{diag}_3(M)$ denotes the $6 \times 6$ matrix that is block-diagonal with 3 identical $2 \times 2$ blocks given by $M$. This shows that conjugation by $g$ stabilizes $\SU(2)_3$ and sends $G_2$ to $G_3$ as claimed. Finally, one checks that $$g \pwr{
\begin{array}{c|c|c}
-\id & 0 & 0 \\
\hline
0 & -\id & 0 \\
\hline
0 & 0 & \id
\end{array}
} g^{-1}= \pwr{
\begin{array}{c|c|c}
-\id & 0 & 0 \\
\hline
0 & -\id & 0 \\
\hline
0 & 0 & \id
\end{array}
},$$ which (combined with the first part of the lemma) easily implies the claim about $G_5$ and $G_6$.
\end{proof}

We now combine Lemma~\ref{lemma:sato-tate-conjugacy-classes} and
Proposition~\ref{prop:grouparising} to prove Theorem~\ref{thm:main}(1).

\begin{proof}[Proof of Theorem~\ref{thm:main}(1)]
	First, observe that the Sato\--Tate group of a twist is completely characterized by its group of components, given that the identity component is invariant under twisting. Furthermore, the group of components of any twist is a subgroup of $\ST_{\Tw}(C)/\ST_{\Tw}^0(C) = S_4$.
By Proposition~\ref{prop:grouparising}, all the subgroups of $S_4$ can be realized as the group of components of $\ST_{\Tw}(C^{\xi})$, where $C^{\xi}$ is some twist of $C$.
Here, we are using that there is a solution to the inverse Galois problem for every subgroup of $S_4$ (as follows, for example from Shafarevich's result that, that every finite solvable group is realizable as a Galois group of a number field \cite{shafarevich1958imbedding}).
Since conjugation in $S_4$ yields equivalent Sato-Tate groups, there are at most 11 non-equivalent Sato\--Tate groups of twists.
Hence, there are at most 11 Sato\--Tate groups arising from the 11 conjugacy classes of $S_4$, which we recall in Table \ref{Table4}.
By Lemma~\ref{lemma:sato-tate-conjugacy-classes}, conjugation in $\USp(6)$ reduces this number to $9$.
Finally, these $9$ conjugacy classes correspond to pairwise nonisomorphic
Sato\--Tate groups, because the normalized trace distributions
of these twists are pairwise distinct, as computed in Table~\ref{Table4}.
\end{proof}

As in Proposition \ref{prop:findingtwist}, for any subgroup $G$ of $S_4$ we can compute a twist of $C'$ whose Sato\--Tate group has group of components isomorphic to $G$. We now provide another detailed example to illustrate this technique.

\subsection{A $\mathbb{Z}/4\mathbb{Z}$-twist}
We show how to compute a twist of $C'$ whose Sato-Tate group has group of components isomorphic to $\mathbb{Z}/4\mathbb{Z}$. According to section \ref{sec:CompGrp}, a twist $C^\cocyc$ will have this property precisely if the image of $\cocyc \colon \abGal \to \Aut(C')$ is a cyclic group of order $4$, which means that the kernel of $\cocyc$ defines a degree-4 cyclic extension of $\mQ$. Among all such extensions, the one with smallest discriminant is the cyclotomic field $\mQ(\zeta)$, where $\zeta$ is a primitive $5\tth$ root of unity. Since we are free to choose $\cocyc$, we take $\ker \cocyc$ to be $\operatorname{Gal}\left( \overline{\mQ} / \mQ(\zeta) \right)$. To completely describe $\cocyc$, we now need to choose an injective homomorphism of $\operatorname{Gal}\left(\mQ(\zeta)/\mQ \right)$ into $\Aut(C')$. As a generator of $\operatorname{Gal}\left(\mQ(\zeta)/\mQ \right)$, we take the unique automorphism $\sigma$ of $\mQ(\zeta)/\mQ$ that sends $\zeta$ to $\zeta^2$, and we define our $\cocyc$ by requiring that $\varphi := \cocyc(\sigma) \in \Aut(C')$ acts on the function field $\mQ(C')$ as follows:
\[
\varphi(t/z^2)=t/z^2, \quad \varphi(x/z)=-y/z, \quad \varphi(y/z)=x/z.
\]

In order to compute equations for the twist $C^\cocyc$, simply recall (see for example \cite[Chapter X.2]{Silverman}) that the function field $\mQ(C^\cocyc)$ is the fixed field of $\mQ(\zeta)(C')$ under the twisted Galois action given by
\[
\begin{array}{lcl}
\sigma(\zeta)=\zeta^2 & & \sigma(x/z)= \varphi(x/z) = -y/z \\
\sigma(y/z)=\varphi(y/z)=x/z & & \sigma(t/z^2)=\varphi(t/z^2)=t/z^2.
\end{array}
\]

We now determine the field of invariants for this action. Clearly $t/z^2$ is fixed under Galois. For all $j \in \mathbb{Z}$ we have another invariant function given by
\[
\sum_{g \in \operatorname{Gal}\left( \mQ(\zeta)/\mQ \right)} g \cdot (\zeta^j x/z).
\]
Taking $j=1$ and $j=2$, we find that the functions $u$ and $v$ given by
\begin{equation}\label{eq_xyab}
\begin{pmatrix} u \\ v \end{pmatrix} := \left(
\begin{array}{cc}
 \zeta -\zeta ^4 & \zeta^3-\zeta ^2 \\
 \zeta ^2 (1-\zeta ) & \zeta  \left(1-\zeta ^3\right) \\
\end{array}
\right) \begin{pmatrix} x/z \\ y/z \end{pmatrix}
\end{equation}
are Galois invariants.
Since the matrix appearing in this formula is invertible over $\mQ(\zeta)$, the function fields $\mQ(\zeta)(u,v,t/z^2)$ and $\mQ(\zeta)(x/z,y/z,t/z^2)$ coincide. Hence the curve (defined over $\mQ$) whose function field is $\mQ(u,v,t/z^2)$ becomes isomorphic to $C'$ over $\mQ(\zeta)$; in other words, we have already found generators for the full field of invariants. Now we just need to find the equations satisfied by $u,v$ and $t/z^2$. Defining $M$ to be the $2 \times 2$ matrix appearing in \eqref{eq_xyab}, we can write
\[
\begin{pmatrix} x/z \\ y/z \end{pmatrix} = M^{-1} \begin{pmatrix}u  \\ v \end{pmatrix}=\frac{1}{1 - \zeta^2 - 2 \zeta^3 + 2 \zeta^4}\left(
\begin{array}{c}
 \left(\zeta ^2+\zeta +1\right) u+\zeta  v \\
\zeta  u-\left(\zeta ^2+\zeta +1\right) v  \\
\end{array}
\right),
\]
and we know that $x/z, y/z$ and $t/z^2$ satisfy the two equations
\[
\begin{cases}
(x/z)^2+(y/z)^2+1=0 \\
-2(t/z^2)^2 = (x/z)^4 + (y/z)^4 + 1.
\end{cases}
\]

Replacing $x/z, y/z$ by their expressions in terms of $u, v$ and expanding, the previous system becomes
\[
\begin{cases}
u^2+v^2-5 = 0 \\
(t/z^2)^2 = \displaystyle \frac{1}{125} \left(3 u^4+4 u^3 v+12 u^2 v^2-4 u v^3+3 v^4\right)+1 .
\end{cases}
\]
In particular, the first equation defines a conic section with a $\mQ$-rational point, so this curve admits a hyperelliptic model over $\mQ$; \texttt{Magma}'s intrinsic \texttt{IsHyperelliptic} returns the hyperelliptic model
\begin{equation}\label{eq:ModelZ4}
y^2 = -5x^8 - 5x^7 - 35x^6 + 35x^5 - 35x^3 -
    35x^2 + 5x - 5.
    \end{equation}
Finally, since quadratic twists do not change the Sato\--Tate group of a curve (cf.~Remark \ref{rmk:quadratic-twisting}), and since $\sqrt{5}$ belongs to $\mQ(\zeta)$, we can further take the quadratic twist by 5 of \eqref{eq:ModelZ4} without changing either the splitting field of the twist or the group of components of the corresponding Sato\--Tate group. Thus, we arrive at the following model for the $\ZZ{4}$-twist:
\[
y^2 = -x^8 - x^7 - 7x^6 + 7x^5 - 7x^3 -
    7x^2 + x - 1.\]
    
\section{Proving Sato\--Tate for all the twists}
\label{section:proving-sato-tate}
The goal of this section is to prove the generalized Sato\--Tate conjecture for all twists of $C$,
which will complete the proof of our main theorem~\autoref{thm:main},
by proving~\autoref{thm:main}(3).
We accomplish this at the end of the section in~\autoref{thm:ProofST}. To prove this, our method is to observe the equality of the distribution of the $a_i(A)(p)$ and the pushforward of the Haar measure $\mu$ by $\Phi_i$ through direct computation. The computation of the distribution of $a_i(A)(p)$ as $A$ ranges over the twists of $C$ is done in~\autoref{prop:MomentSequences3}, while the computation
of the pushforward of the Haar measure is given in~\autoref{prop:MomentSequences4}.

Before proceeding, we briefly recall the definitions of equidistribution and moment sequences. Let $X$ be a compact topological space equipped with a Radon measure $\mu$. Denote by $C(X)$ the normed space of continuous, complex valued functions $f$ on $X$ with norm given by $\|f\| := \sup_{x \in X}|f(x)|$. We say that a sequence $\{x_i\}_{i \in \mathbb{N}}$ is \cdef{equidistributed} with respect to $\mu$ if for all $f \in C(X)$ we have

\[\mu(f) = \lim_{m \rightarrow \infty}\dfrac{1}{m}\sum_{i=1}^m f(x_i). \]

We restrict our attention to the case where $X$ is an interval. In this case, the \cdef{$n\tth$ moment}, if it exists, is defined as
\[M_n(\{x_i\}) = \lim_{m \rightarrow \infty}\dfrac{1}{m}\sum_{i \le m} x_i^n.\]
Notice, that if the sequence $\{x_i\}$ is equidistributed with respect to the measure $\mu$, we have that the $n\tth$ moment exists and is equal to $\mu(\psi_n)$ where $\psi_n(x) = x^n$. In fact a partial converse statement holds. If the $n\tth$ moment of a sequence $\{x_i\}$ exists for every $n \ge 0$, then there exists a unique measure $\mu$ such that $\{x_i\}$ is equidistributed with respect to $\mu$ (see Remark 2.5 of \cite{sutherland2016sato} for details).

We notate the moments associated to the measure $\mu$ by $\mE_{\mu}[x^n]$. For later use, we denote by $n_*\mu$ the pushforward of $\mu$ by the multiplication-by-$n$ map; in particular, if $\mu$ is supported on the interval $[-a,a]$, then $n_*\mu$ is supported on $[-na,na]$. Further define $\phi$ to be the Sato\--Tate distribution of a non-CM elliptic curve (which is given by $\phi(f) = \frac{1}{2\pi}\int_{-2}^2 f(x) \sqrt{4-x^2} d x$ for $f \in C([-2,2])$) and define $\delta_0$ to be the point mass at $0$, which we will use in the tables in \autoref{section:tables}.

\begin{remark}\label{rmk:MomentsPhi}
A straightforward direct computation gives
\[
\mathbb{E}_\phi[x^n] = \Gamma\left(\frac{n+1}{2} \right) \Gamma\left(\frac{n+4}{2} \right)^{-1} (2\sqrt{\pi})^{-1} \cdot (2^n+(-2)^n),
\]
and by definition we have $\mathbb{E}_{3_*\phi}[x^n]=3^n\mathbb{E}_\phi[x^n]$.
\end{remark}

\subsection{Finding the distribution of traces of Frobenius}
\label{subsection:distribution-of-trace}

Before computing the distribution of $a_i(A)(p)$ for all $i$ in~\autoref{subsection:higher-i-distribution},
we first work
out the instructive special case when $i = 1$ in the main result of this subsection, \autoref{prop:MomentSequences2}. That is, we are looking for the distribution
of the normalized trace of Frobenius.

Let $C^\xi$ be a twist of $C'$ corresponding to a twisting cocycle $\xi\colon\abGal \to \operatorname{Aut}(C')$.
Let $E/\mQ$ be the elliptic curve $y^2=x^4-14x^2+1$. We know that $\operatorname{Jac} C^\xi$ is $\overline{\mQ}$-isogenous to $E^3$. Moreover, for every prime $p$ we know by \cite[Proposition 11]{MR2678623} that the action of the Frobenius at $p$ on $\operatorname{Jac}C^\xi$ is given by $F_p \circ \xi(\operatorname{Frob}_p)$, where $F_p$ is the automorphism of $\operatorname{Jac} C$ induced by the Frobenius at $p$ (notice that the Frobenius at $p$ acts on $(\operatorname{Jac}C)_{\overline{\mathbb{F}_p}} \cong (\operatorname{Jac}C^\xi)_{\overline{\mathbb{F}_p}}$).
Considering the matrix representation of the automorphisms of the form $\xi(\operatorname{Frob}_p)$, we can understand the trace of Frobenius acting on $C^{\xi}$ in terms of the trace of Frobenius for $E$.
\begin{lemma}\label{lemma:functions-fg}
Let $\ell \neq p$ be some prime. For each $g \in \operatorname{Aut}(C')$, there is a function $f_g\colon\mathbb{R} \to \mathbb{R}$ such that 
\[
\frac{1}{\sqrt{p}} \operatorname{tr} \left(  \operatorname{Frob}_p \bigm\vert H^1_{\acute{e}t}( (\Jac C^\xi)_{\overline{\mathbb{F}_p}},\mQ_\ell) \right) = f_g\left( \frac{1}{\sqrt{p}}  \operatorname{tr} \left( \operatorname{Frob}_p \bigm\vert H^1_{\acute{e}t}(E_{\overline{\mathbb{F}_p}},\mQ_\ell) \right) \right)
\]
whenever $\xi(\operatorname{Frob}_p)=g$. Moreover, $f_g$ depends only on the conjugacy class of $g$ in $\operatorname{Aut}(C')$; more precisely, we have
$$
f_g(x)= \left\lbrace \begin{array}{ll}
3x & \text{ if } g=\id ,\\
-3x & \text{ if } g=\iota ,\\
\pm x &\text{ if $g \neq \id$ has order 2 or 4},\\
0 & \text{ if $g$ has order 3 or 6}.
\end{array}\right.
$$
\end{lemma}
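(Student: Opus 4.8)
The plan is to reduce the statement to a trace computation with $6\times 6$ matrices in the coordinates of \autoref{proposition:twisting-sato-tate-group-of-C}. First I would invoke the formula recalled above (\cite[Proposition~11]{MR2678623}), applied to $C^\xi$ regarded as a twist of $C'$: after identifying $(\Jac C^\xi)_{\overline{\mathbb{F}_p}}\cong(\Jac C')_{\overline{\mathbb{F}_p}}$, the geometric Frobenius acting on $H^1_{\acute{e}t}((\Jac C^\xi)_{\overline{\mathbb{F}_p}},\mQ_\ell)$ is the composite of the Frobenius $\Psi_p$ of $\Jac C'$ with the automorphism $g=\xi(\operatorname{Frob}_p)$. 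On $H^1_{\acute{e}t}$ this composite is, up to a transpose, an inverse, and a swap of the two factors --- none of which affects the trace --- the product of the matrix of $g$ and the matrix of $\Psi_p$ in the basis dual to $\omega_1,\omega_2,\omega_3$.

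Next I would identify those two matrices. By \autoref{proposition:twisting-sato-tate-group-of-C} and \autoref{remark:identification-of-automorphisms-with-jacobian}, in this basis $\operatorname{Aut}(C')\cong\operatorname{Aut}(C_{\overline{\mathbb{Q}}})$ acts through the group of all signed $3\times 3$ permutation matrices, so the matrix of $g$ is $M_g\otimes\id_2$ for a signed permutation matrix $M_g\in\GL_3(\mathbb{Z})$, and $g\mapsto M_g$ is a group homomorphism. For $\Psi_p$ the key input is that $\Jac C'\sim_{\mathbb{Q}}E^3$ --- slightly stronger than the $\overline{\mathbb{Q}}$-isogeny noted above, but forced by the connectedness of $\operatorname{ST}(C')$ proved in \autoref{prop:findingtwist} (together with Faltings' isogeny theorem), and in any case needed already to get $f_{\id}(x)=3x$. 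Since every automorphism of $C'$ is defined over $\mathbb{Q}$, hence over $\mathbb{F}_p$, the endomorphism $\Psi_p$ commutes with the action of $\operatorname{Aut}(C')$ on $H^1_{\acute{e}t}$. As the signed permutation matrices linearly span $\Mat_3(\mathbb{Q})$, the centralizer of $\Mat_3(\mQ_\ell)\otimes\id_2$ inside $\Mat_6(\mQ_\ell)$ is $\id_3\otimes\Mat_2(\mQ_\ell)$, so $\Psi_p$ must act as $\id_3\otimes\phi_p$ for a single $2\times 2$ matrix $\phi_p$; comparing traces via $\Jac C'\sim_{\mathbb{Q}}E^3$ yields $\operatorname{tr}\phi_p=\operatorname{tr}(\operatorname{Frob}_p\mid H^1_{\acute{e}t}(E_{\overline{\mathbb{F}_p}},\mQ_\ell))$.

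Combining the two, the Frobenius on $H^1_{\acute{e}t}((\Jac C^\xi)_{\overline{\mathbb{F}_p}},\mQ_\ell)$ acts as $(\id_3\otimes\phi_p)(M_g\otimes\id_2)=M_g\otimes\phi_p$, of trace $\operatorname{tr}(M_g)\cdot\operatorname{tr}(\phi_p)$; dividing by $\sqrt p$ gives the lemma with $f_g(x)=\operatorname{tr}(M_g)\,x$, which depends only on the conjugacy class of $g$ since $g\mapsto M_g$ is a homomorphism and the trace is a class function. It then remains to read off $\operatorname{tr}(M_g)$ for each class: $M_{\id}=\id_3$ has trace $3$; $M_\iota=-\id_3$ has trace $-3$; if the underlying permutation of $M_g$ is a $3$-cycle --- equivalently $g$ has order $3$ or $6$ --- then $M_g$ has zero diagonal and trace $0$; and in the remaining cases ($g\neq\id$ of order $2$, or $g$ of order $4$) the underlying permutation has exactly one fixed point, so $M_g$ has a single nonzero diagonal entry, equal to $\pm 1$, and trace $\pm 1$. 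This reproduces the asserted table of $f_g$'s. The one genuinely substantive point is the centralizer computation pinning down $\Psi_p$ together with the input $\Jac C'\sim_{\mathbb{Q}}E^3$; everything else is bookkeeping with $6\times 6$ matrices.
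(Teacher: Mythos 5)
Your proof takes essentially the same approach as the paper's: express the Frobenius of $\Jac C^\xi$ as $F_p\circ\xi(\operatorname{Frob}_p)$ via the cited formula, identify $F_p$ and $\xi(\operatorname{Frob}_p)$ as tensor factors under the $E^3$ decomposition, and compute the trace as $\operatorname{tr}(M_g)\operatorname{tr}(\phi_p)$. The one genuine addition is the centralizer argument explaining \emph{why} $F_p$ must act as $\id_3\otimes\phi_p$ (the signed permutation matrices linearly span $\Mat_3$, so the commutant of the automorphism action is $\id_3\otimes\Mat_2$); the paper simply asserts the block form $A\otimes\id_3$ without comment, so this is a useful gloss rather than a different route.

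Two small points worth flagging. First, in the case analysis you claim that for $g\neq\id$ of order $2$, or $g$ of order $4$, ``the underlying permutation has exactly one fixed point, so $M_g$ has a single nonzero diagonal entry.'' This fails for the order-$2$ signed permutation matrices with trivial underlying permutation, such as $\operatorname{diag}(1,1,-1)$, which have three nonzero diagonal entries. The conclusion $\operatorname{tr}(M_g)=\pm 1$ is nevertheless still correct for those matrices (any diagonal $\pm1$ matrix in $\GL_3$ other than $\pm\id$ has trace $\pm 1$), so the lemma is unaffected, but the reasoning as phrased misses this subcase. Second, connectedness of $\ST(C')$ together with Faltings yields $\Jac C'\sim_{\mQ}(E')^3$ for \emph{some} elliptic curve $E'/\mQ$ geometrically isogenous to $E$; on its own this identifies $E'$ only up to quadratic twist, whereas the equality $f_{\id}(x)=3x$ rather than $\pm3x$ requires $E'\sim_{\mQ}E$ exactly. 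The paper's proof makes the same unproved assertion, so this is not a gap you introduced, but it is a point where both arguments rely on something not explicitly verified.
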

\begin{proof}
Notice that the matrix representation of $F_p$ has the form $A \otimes \id_3$, where $A$ is the $2 \times 2$ matrix representation of $\operatorname{Frob}_p$ acting on $H^1_{\acute{e}t}(E_{\overline{\mathbb{F}_p}},\mQ_\ell)$. Also notice that $\xi(\operatorname{Frob}_p) \in \operatorname{GL}_6(\mathbb{Z})$ is of the form $\id_2 \otimes \Pi$, with $\Pi$ a signed permutation matrix: indeed, the action of Frobenius on $\Jac(C^\xi)$ permutes the three 1-dimensional factors in the decomposition $\Jac(C^\xi)_{\overline{\mathbb{Q}}} \sim E^3$ and acts on each of them via an automorphism, which (since $E$ does not have CM) can only be $\pm \id$. 
Since the action of the Frobenius at $p$ on $H^1_{\acute{e}t}( (\Jac C^\xi)_{\overline{\mathbb{F}_p}},\mQ_\ell)$ is given by $F_p \circ \xi(\operatorname{Frob}_p)$, we obtain
\[
\begin{aligned}
\frac{1}{\sqrt{p}} \operatorname{tr} \left(  \operatorname{Frob}_p \bigm\vert H^1_{\acute{e}t}( (\Jac C^\xi)_{\overline{\mathbb{F}_p}},\mQ_\ell) \right) & = \frac{1}{\sqrt{p}}\operatorname{tr} \left( (A \otimes \id_3)(\id_2 \otimes \Pi) \right)\\
& = \frac{1}{\sqrt{p}} \operatorname{tr}(A)\operatorname{tr}(\Pi) \\
& =\frac{1}{\sqrt{p}} \operatorname{tr}\left( \operatorname{Frob}_p  \bigm\vert H^1_{\acute{e}t}(E_{\overline{\mathbb{F}_p}},\mQ_\ell) \right) \cdot \operatorname{tr}\Pi,
\end{aligned}
\]
which proves the lemma. Notice that a signed permutation matrix of order 2 or 4 has trace $\pm 1$ (unless it is $-\id$, which has trace $-3$), while a signed permutation matrix of order 3 or 6 has trace 0. For more details, see also \cite[Proposition~12]{MR2678623}.
\end{proof}


\subsubsection{Further Notation}
\label{subsubsection:notation-for-distributions}
For simplicity, denote by $b_p$ the normalized trace of the Frobenius at $p$ acting on $E$ and by $a^\xi_p$ the normalized trace of $\operatorname{Frob}_p$ acting on $\operatorname{Jac} C^\xi$. We shall also write  
\[
f(g,p) := f_g(b_p)=f_g\left( \frac{1}{\sqrt{p}} \operatorname{tr} \left( \operatorname{Frob}_p \bigm\vert H^1_{\acute{e}t}(E_{\overline{\mathbb{F}_p}},\mQ_\ell) \right) \right).
\]

These quantities make sense for all but finitely many primes; let $S$ be the set of bad primes at which either $E$ or $C^\xi$ has bad reduction.

We now compute the limiting distribution for the normalized traces of Frobenius $a_p^{\xi}$ for the twist $C^\xi$ (hence for all the twists, since we haven't made any assumptions about $\xi$). Let $K_\xi$ be the fixed field of the kernel of $\xi$, that is the minimal extension of $\mQ$ splitting the twist. By passing to the quotient, $\xi$ induces an isomorphism $\operatorname{Gal}(K_\xi/\mQ) \cong \xi(\abGal) \subseteq \Aut(C')$. 

\begin{lemma}
	\label{lemma:sum-over-conjugacy-classes}
	Keeping the notation above, let $H:=\xi(\abGal)$, and let $\mathcal D$ be the set of conjugacy classes of $H$. For $D \in \mathcal{D}$, set $f(D,p):=f(h,p)$, where $h \in H$ is any element of $D$.
	The definition is well-posed, and for any $k \in \mathbb N$,
	\begin{align*}
\sum_{p \leq x, p\not\in S} (a^\xi_p)^k = \sum_{D \in \mathcal{D}} \sum_{\substack{p \leq x, p \not \in S \\ \xi(\operatorname{Frob}_p) \in D}} f(D,p)^k.
	\end{align*}
\end{lemma}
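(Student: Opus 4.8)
The plan is to recognize the claimed identity as a partition of the summation range, with all of the real content already packaged in Lemma~\ref{lemma:functions-fg}.

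First I would settle well-posedness. For $p \notin S$ the Frobenius $\operatorname{Frob}_p$ is a well-defined conjugacy class in $\operatorname{Gal}(K_\xi/\mQ) \cong H$, so $\xi(\operatorname{Frob}_p)$ is well-defined in $H$ only up to conjugacy and hence determines a unique class $D \in \mathcal D$. Since elements conjugate in $H$ are a fortiori conjugate in $\operatorname{Aut}(C')$, the last assertion of Lemma~\ref{lemma:functions-fg} --- that $f_g$ depends only on the $\operatorname{Aut}(C')$-conjugacy class of $g$ --- shows that $h \mapsto f(h,p) = f_h(b_p)$ is constant on each class $D$. This simultaneously legitimizes the definition $f(D,p) := f(h,p)$ and shows that $a^\xi_p$, which by Lemma~\ref{lemma:functions-fg} equals $f_{\xi(\operatorname{Frob}_p)}(b_p)$, is itself well-defined and equal to $f(D,p)$, where $D$ is the class of $\xi(\operatorname{Frob}_p)$.

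Granting this, the identity is immediate: the classes $D \in \mathcal D$ partition $H$, so assigning to each $p \leq x$ with $p \notin S$ the unique $D$ containing $\xi(\operatorname{Frob}_p)$ partitions $\{p \leq x : p \notin S\}$ into the blocks indexed by $\mathcal D$, and summing $(a^\xi_p)^k$ over this partition while using $a^\xi_p = f(D,p)$ on the block attached to $D$ yields
\[
\sum_{p \leq x,\ p\notin S} (a^\xi_p)^k \;=\; \sum_{D \in \mathcal D}\ \sum_{\substack{p \leq x,\ p \notin S \\ \xi(\operatorname{Frob}_p) \in D}} (a^\xi_p)^k \;=\; \sum_{D \in \mathcal D}\ \sum_{\substack{p \leq x,\ p \notin S \\ \xi(\operatorname{Frob}_p) \in D}} f(D,p)^k .
\]

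There is no real obstacle here; the only point demanding any care is the well-posedness check above, i.e.\ that the intrinsic ambiguity in the choice of a Frobenius element is absorbed by the conjugation-invariance of $f_g$ recorded in Lemma~\ref{lemma:functions-fg}. Once that is noted, the lemma is a tautological rearrangement of a finite sum.
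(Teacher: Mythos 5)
Your proof is correct and follows essentially the same route as the paper: use Lemma~\ref{lemma:functions-fg} to write $a^\xi_p = f(\xi(\operatorname{Frob}_p),p)$, observe that $\operatorname{Aut}(C')$-conjugacy invariance of $f_g$ implies $H$-conjugacy invariance so $f(D,p)$ is well-defined, and then regroup the sum over $p$ by the $H$-conjugacy class of $\xi(\operatorname{Frob}_p)$. Your added observation that $\operatorname{Frob}_p$ itself is only a conjugacy class (so $\xi(\operatorname{Frob}_p)$ is well-defined only up to $H$-conjugacy, and the same conjugacy-invariance makes $a^\xi_p$ unambiguous) is a correct and slightly more careful framing of the same point the paper makes implicitly.
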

\begin{proof}
	We trivially have (for any $k \in \mathbb{N}$)
\[
\sum_{p \leq x, p\not\in S} (a^\xi_p)^k = \sum_{p \leq x, p \not \in S} f(\xi(\operatorname{Frob}_p),p)^k = \sum_{h \in H} \sum_{\substack{p \leq x, p \not \in S \\ \xi(\operatorname{Frob}_p)=h}} f(h,p)^k.
\]
Now observe that $f_h$ depends on $h$ only through its conjugacy class in $\operatorname{Aut}(C')$, so a fortiori, it can only depend on $h$ through its conjugacy class in $\xi(\abGal)$. Indeed, as $\xi(\abGal)$ is a subgroup of $\operatorname{Aut}(C')$, conjugacy in $\xi(\abGal)$ is a finer invariant than conjugacy in $\Aut(C')$. If $D$ is a conjugacy class in $H$, then, it makes sense to set $f(D,p):=f(h,p)$ where $h$ is any element of $D$. 

It follows that if we let $\mathcal{D}$ be the set of conjugacy classes of $H$ we have
\begin{align*}
\sum_{p \leq x, p\not\in S} (a^\xi_p)^k & = \sum_{D \in \mathcal{D}} \sum_{h \in D} \sum_{\substack{p \leq x, p \not \in S \\ \xi(\operatorname{Frob}_p)=h}} f(h,p)^k \\
& = \sum_{D \in \mathcal{D}} \sum_{h \in \mathcal{D}} \sum_{\substack{p \leq x, p \not \in S \\ \xi(\operatorname{Frob}_p)=h}} f(D,p)^k
\\& = \sum_{D \in \mathcal{D}} \sum_{\substack{p \leq x, p \not \in S \\ \xi(\operatorname{Frob}_p) \in D}} f(D,p)^k.
\end{align*}
\end{proof}

\begin{lemma}
	\label{lemma:average-trace}
	Retain the notation of~\autoref{subsubsection:notation-for-distributions} and~\autoref{lemma:sum-over-conjugacy-classes}. For each conjugacy class $D$ in $H$, define $f_D(x):=f_h(x)$, where $h$ is any element in $D$. Let $\pi(x)$ denote the number of primes bounded by $x$. We have
\[
\begin{aligned}
\frac{1}{\pi(x)} \sum_{p \leq x, p\not\in S} (a^\xi_p)^k & = \sum_{D \in \mathcal{D}} \frac{|D|}{|H|} \left( \int_{\operatorname{SU}(2)} f_D(\operatorname{tr}(g))^k dg  \right) +o(1).
\end{aligned}
\]

\end{lemma}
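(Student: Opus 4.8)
The plan is to combine Lemma~\ref{lemma:sum-over-conjugacy-classes} with Sato--Tate equidistribution for the elliptic curve $E$, in a form compatible with a Chebotarev condition. First I would use Lemma~\ref{lemma:sum-over-conjugacy-classes} to write
\[
\frac{1}{\pi(x)}\sum_{p\leq x,\,p\notin S}(a^\xi_p)^k \;=\; \sum_{D\in\mathcal D}\ \frac{1}{\pi(x)}\sum_{\substack{p\leq x,\,p\notin S\\ \xi(\operatorname{Frob}_p)\in D}} f_D(b_p)^k,
\]
so it suffices to treat one conjugacy class $D$ at a time and show
\[
\frac{1}{\pi(x)}\sum_{\substack{p\leq x,\,p\notin S\\ \xi(\operatorname{Frob}_p)\in D}} f_D(b_p)^k \;=\; \frac{|D|}{|H|}\int_{\operatorname{SU}(2)} f_D(\operatorname{tr}(g))^k\,dg + o(1).
\]
By Lemma~\ref{lemma:functions-fg} each $f_D$ is \emph{linear}, say $f_D(x)=c_D\,x$ with $c_D=\operatorname{tr}(\Pi_D)\in\{0,\pm1,\pm3\}$ (the trace of the signed permutation matrix attached to $D$), so $f_D(b_p)^k=c_D^k\,b_p^k$ and $\int_{\operatorname{SU}(2)}f_D(\operatorname{tr}(g))^k\,dg=c_D^k\,\mathbb E_\phi[x^k]$; the previous display thus reduces to
\[
\frac{1}{\pi(x)}\sum_{\substack{p\leq x,\,p\notin S\\ \xi(\operatorname{Frob}_p)\in D}} b_p^k \;=\; \frac{|D|}{|H|}\,\mathbb E_\phi[x^k] + o(1).
\]
Since $S$ is finite and $|b_p|\leq 2$, the primes in $S$ contribute $O(1/\pi(x))$ and may be ignored.

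I would then recognize this as a joint equidistribution statement. Let $K_\xi$ be the finite Galois extension of $\mQ$ fixed by $\ker\xi$, with $\operatorname{Gal}(K_\xi/\mQ)\cong H$ identified so that the Artin symbol of $p$ corresponds to $\xi(\operatorname{Frob}_p)$, and for good $p$ let $\theta_{E,p}\in\operatorname{SU}(2)^\natural$ denote the Sato--Tate conjugacy class of $E$ at $p$, so $\operatorname{tr}\theta_{E,p}=b_p$. The family of displayed identities (over all $D$ and all $k\geq0$) is equivalent, by a standard Weyl-criterion argument, to the assertion that the pairs $\bigl(\xi(\operatorname{Frob}_p),\,\theta_{E,p}\bigr)$ equidistribute in $\{\text{conjugacy classes of }H\}\times\operatorname{SU}(2)^\natural$ with respect to the product of the measure giving mass $|D|/|H|$ to the class $D$ and the Sato--Tate measure $\phi$. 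Testing against the products $\psi(\operatorname{Frob}_p)\,U_n(\theta_{E,p})$ (where $\psi$ runs over irreducible characters of $H$, $U_n$ is the character of the $n$-th symmetric power of the standard representation of $\operatorname{SU}(2)$, and $U_0=1$), which are dense in the relevant space of class functions, this is in turn equivalent to showing
\[
\frac{1}{\pi(x)}\sum_{p\leq x}\psi(\operatorname{Frob}_p)\,U_n(\theta_{E,p})\longrightarrow 0 \qquad\text{for all }(\psi,n)\neq(\mathbf 1,0),
\]
i.e. to the holomorphy and non-vanishing on $\operatorname{Re}(s)=1$ of the $L$-functions $L(\operatorname{Sym}^nE\otimes\sigma_\psi,s)$, where $\sigma_\psi$ is the Artin representation of $\operatorname{Gal}(K_\xi/\mQ)$ of character $\psi$.

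The main obstacle is exactly this last analytic input. For $\psi=\mathbf 1$, $n\geq1$ it is the Sato--Tate theorem for $E$; for $n=0$, $\psi\neq\mathbf 1$ it is classical (non-vanishing of Artin $L$-functions on $\operatorname{Re}(s)=1$, equivalently Chebotarev); the genuinely mixed case $\psi\neq\mathbf 1$, $n\geq1$ requires that $\operatorname{Sym}^nE\otimes\sigma_\psi$ remain potentially automorphic and that its $L$-function inherit the needed analytic continuation and non-vanishing. This is supplied by the same potential-modularity results underlying Sato--Tate for $E$ --- the Artin representation $\sigma_\psi$ becomes trivial over a finite extension, over which $\operatorname{Sym}^nE$ is automorphic --- and I would record it exactly as in the analogous step of \cite{fite2014sato}. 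Assembling the estimates for the individual classes $D$ then yields the stated formula with error $o(1)$.
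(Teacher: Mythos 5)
Your proof is correct and takes essentially the same route as the paper: reduce via Lemma~\ref{lemma:sum-over-conjugacy-classes} to a single conjugacy class, apply the refined Sato--Tate equidistribution for $E$ (equidistribution of $b_p$ conditioned on the Artin symbol in $K_\xi/\mQ$) together with Chebotarev to extract the factor $|D|/|H|$, then sum over classes. The paper simply cites the required joint-equidistribution input as \cite[Theorem~1]{MR3012726} rather than unpacking it into the $L$-function non-vanishing statements you spell out, but the substance is identical.
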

\begin{proof}

By the refined Sato\--Tate conjecture for $E$, which asserts the equidistribution of $b_p$ when we restrict to primes $p$ having a given Artin symbol in the extension $K_\xi/\mQ$ (cf.~\cite[Theorem 1]{MR3012726}), we know that
\begin{equation}\label{eq_ST2}
\sum_{\substack{p \leq x, p \not \in S \\ \xi(\operatorname{Frob}_p) \in D}} f(D,p)^k =  \left( \int_{\operatorname{SU}(2)} f_D(\operatorname{tr}(g))^k dg  \right) \left(\sum_{\substack{p \leq x, p \not \in S \\ \xi(\operatorname{Frob}_p) \in D}} 1\right) + o_D \left(\sum_{\substack{p \leq x, p \not \in S \\ \xi(\operatorname{Frob}_p) \in D}} 1\right)
\end{equation}
as $x \to \infty$. We now apply Chebotarev's theorem, in the form
\[
\frac{1}{\pi(x)} \left(\sum_{\substack{p \leq x, p \not \in S \\ \xi(\operatorname{Frob}_p) \in D}} 1\right) = \frac{|D|}{|H|} + o_D(1) \text{ as } x \to \infty.
\]

Dividing equation \eqref{eq_ST2} by $\pi(x)$, we find
\[
\frac{1}{\pi(x)} \sum_{\substack{p \leq x, p \not \in S \\ \xi(\operatorname{Frob}_p) \in D}} f(D,p)^k = \frac{|D|}{|H|} \left( \int_{\operatorname{SU}(2)} f_D(\operatorname{tr}(g))^k dg  \right) +o_D(1).
\]
By summing over $D \in \mathcal{D}$ and using Lemma~\ref{lemma:sum-over-conjugacy-classes}, we deduce
\[
\begin{aligned}
\frac{1}{\pi(x)} \sum_{p \leq x, p\not\in S} (a^\xi_p)^k & = \frac{1}{\pi(x)} \sum_{D \in \mathcal{D}} \sum_{\substack{p \leq x, p \not \in S \\ \xi(\operatorname{Frob}_p) \in D}} f(D,p)^k \\
& = \sum_{D \in \mathcal{D}} \frac{|D|}{|H|} \left( \int_{\operatorname{SU}(2)} f_D(\operatorname{tr}(g))^k dg  \right) +o(1).
\end{aligned}
\]
\end{proof}

In Proposition~\ref{prop:MomentSequences2}, we compute the limiting distribution of $a_p^{\xi}$ over the group of components of a twist $C^{\xi}$ of $C'$, which is $H/(H\cap \iota)$ where $H = \xi(G_{\mQ})$. Notice that in Lemmas~\ref{lemma:sum-over-conjugacy-classes}, \ref{lemma:average-trace}, our computations involved the group $H$ not $H/(H\cap \iota)$. We accommodate for this discrepancy below by considering elements of $H\setminus\brk{\iota}$, not simply $H$.

\begin{prop}\label{prop:MomentSequences2}
With notation as in~\autoref{subsubsection:notation-for-distributions}, let $C^\xi$ be the twist of $C'$ corresponding to a homomorphism $\xi\colon \abGal \to \operatorname{Aut}(C')$, let $H$ be the image of $\xi$, let $b$ be the number of elements in $H \setminus \{\iota\}$ having order $2$ or $4$, and let $c$ be 1 (respectively 0) if $H$ contains (respectively does not contain) the hyperelliptic involution $\iota$. Finally let $M_k := \int_{\operatorname{SU}(2)} \operatorname{tr}(g)^k dg$.
For every $k>0$, we have
\begin{equation}\label{eq_ST3}
\lim_{x \to \infty} \frac{1}{\pi(x)} \sum_{p \leq x, p\not\in S} (a^\xi_p)^k = \frac{3^k (1+c) +b}{|H|}M_k,
\end{equation}
while for $k=0$ we trivially have $\lim_{x \to \infty} \frac{1}{\pi(x)} \sum_{p \leq x, p\not\in S} (a^\xi_p)^0 = 1$.
\end{prop}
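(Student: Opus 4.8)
The plan is to evaluate the constant appearing on the right-hand side of Lemma~\ref{lemma:average-trace}. First I would pass from a sum over conjugacy classes to a sum over group elements: since the integrand there depends only on the conjugacy class of $h$ in $H$ (in fact only on its class in $\operatorname{Aut}(C')$, by Lemma~\ref{lemma:functions-fg}), letting $x\to\infty$ in Lemma~\ref{lemma:average-trace} shows that the limit exists and that, for every $k\ge 1$,
\[
\lim_{x \to \infty} \frac{1}{\pi(x)} \sum_{p \leq x, p\not\in S} (a^\xi_p)^k = \sum_{D \in \mathcal D}\frac{|D|}{|H|}\int_{\operatorname{SU}(2)} f_D(\operatorname{tr} g)^k\,dg = \frac{1}{|H|}\sum_{h \in H}\int_{\operatorname{SU}(2)} f_h(\operatorname{tr} g)^k\,dg.
\]
So it remains only to compute the last expression.

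Next I would substitute the explicit description of $f_h$ from Lemma~\ref{lemma:functions-fg}. Writing $f_h(x)=c_h x$ with $c_h=3$ if $h=\id$, $c_h=-3$ if $h=\iota$, $c_h=\pm 1$ if $h\notin\{\id,\iota\}$ has order $2$ or $4$, and $c_h=0$ if $h$ has order $3$ or $6$, we get $\int_{\operatorname{SU}(2)} f_h(\operatorname{tr} g)^k\,dg = c_h^k M_k$ for $k\ge 1$. The one step requiring genuine care is that $M_k=\int_{\operatorname{SU}(2)}\operatorname{tr}(g)^k\,dg$ vanishes for odd $k$ — for instance because left translation by $-\id\in\operatorname{SU}(2)$ preserves Haar measure and negates the trace, or directly from the odd moments of the semicircular measure recorded in Remark~\ref{rmk:MomentsPhi}. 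Consequently $c_h^k M_k = |c_h|^k M_k$ for all $k\ge 1$, which is exactly what collapses the signs $-3$ and $\pm 1$ into a single clean coefficient.

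It then remains to sum $|c_h|^k$ over $h\in H$: the identity contributes $3^k$; the hyperelliptic involution $\iota$, when it lies in $H$, contributes a further $3^k$ (this produces the factor $1+c$); each of the $b$ elements of $H\setminus\{\iota\}$ of order $2$ or $4$ contributes $1^k=1$, with no double counting since $\id$ has order $1$ and is therefore not among them; and elements of order $3$ or $6$ contribute $0$. Hence $\sum_{h\in H}|c_h|^k = 3^k(1+c)+b$, and dividing by $|H|$ yields \eqref{eq_ST3}. The case $k=0$ is immediate, as $(a_p^\xi)^0=1$ for every $p\notin S$ so the normalized sum is identically $1$. The whole argument is bookkeeping on top of Lemmas~\ref{lemma:average-trace} and~\ref{lemma:functions-fg}; the only input beyond those is the vanishing of the odd moments of $\operatorname{SU}(2)$, and the only place one could slip is in tracking which elements are counted by $b$ versus the $\id$ and $\iota$ terms.
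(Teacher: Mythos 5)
Your proof is correct and follows essentially the same route as the paper's: both reduce to Lemma~\ref{lemma:average-trace}, substitute the explicit $f_h$ from Lemma~\ref{lemma:functions-fg}, and use the vanishing of the odd moments $M_k$ of $\SU(2)$ to neutralize the signs before counting elements. The only cosmetic differences are that you pass from conjugacy classes to a sum over individual elements of $H$ (the paper does this implicitly when assembling the final count), and you handle the sign issue by the uniform identity $c_h^k M_k = |c_h|^k M_k$ rather than by splitting explicitly into the cases $k$ odd and $k$ even.
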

\begin{proof}
Consider the elements of $H$ as signed permutation matrices, and recall that under this identification $\iota$ corresponds to $-\id$.
By~\autoref{lemma:functions-fg},
$$
f_D(x)= \left\lbrace \begin{array}{ll}
\pm 3x &\text{ if } D=\{\pm \id\},\\
\pm x & \text{ if the elements of $D \neq \{\pm \id\}$ have order 2 or 4},\\
0 &\text{ if the elements of D have order 3 or 6}.
\end{array} \right.
$$
Using this and Lemma~\ref{lemma:average-trace}, we have
\[
\begin{aligned}
\frac{1}{\pi(x)} \sum_{p \leq x, p\not\in S} (a^\xi_p)^k & =  \sum_{D \in \mathcal{D}} \frac{|D|}{|H|} \left( \int_{\operatorname{SU}(2)} f_D(\operatorname{tr}(g))^k dg  \right) +o(1);
\end{aligned}
\]
if $k$ is odd, the integral over $\SU(2)$ vanishes and we obtain $\frac{1}{\pi(x)} \sum_{p \leq x, p\not\in S} (a^\xi_p)^k =0$, which proves \eqref{eq_ST3} in this case (since $M_k=0$ for all odd $k$). If $k$ is even, then the sign in the definition of $f_D(x)$ is not important, and we obtain
\[
\begin{aligned}
\frac{1}{\pi(x)} \sum_{p \leq x, p\not\in S} (a^\xi_p)^k= \frac{|\{h \in H\setminus\{\iota\} : \operatorname{ord}(h) =2 \text{ or } 4 \}|}{|H|} M_k + \frac{1+c}{|H|} \cdot 3^kM_k + o(1).
\end{aligned}
\]\end{proof}

\subsection{Distributions of all $a_i$}
\label{subsection:higher-i-distribution}

We now generalize~\autoref{prop:MomentSequences2} to all coefficients of the characteristic polynomial of the normalized Frobenius automorphism.
This is accomplished in the
main result of this subsection,~\autoref{prop:MomentSequences3}.

\begin{lemma}\label{lemma:higher-i-distrib}
	Let $F_p^\xi$ be the automorphism of $H^1_{\acute{e}t}( (\operatorname{Jac}C^\xi)_{\overline{\mathbb{F}_p}}, \mQ_\ell )$ given by the action of the Frobenius at $p$, and let $H_p^{\xi}(t)$ be the characteristic polynomial of $F_p^\xi$ and $h_p^\xi(t):= \frac{1}{p^3} H_p^\xi(\sqrt{p}t)$. The polynomial $h_p^\xi(t)$, which we call the \textbf{normalized characteristic polynomial of the Frobenius at $p$}, can be computed purely in terms of $b_p$ and of $\xi(\operatorname{Frob}_p)$.
\end{lemma}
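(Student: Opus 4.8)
The plan is to upgrade the trace computation of~\autoref{lemma:functions-fg} to the full characteristic polynomial. Recall from the proof of that lemma that, after fixing the isomorphism $(\Jac C^\xi)_{\overline{\mathbb{F}_p}} \cong (\Jac C)_{\overline{\mathbb{F}_p}}$, the automorphism $F_p^\xi$ is the composite $F_p \circ \xi(\operatorname{Frob}_p)$, where $F_p = A \otimes \id_3$ with $A$ the $2\times 2$ matrix of $\operatorname{Frob}_p$ acting on $H^1_{\acute{e}t}(E_{\overline{\mathbb{F}_p}},\mQ_\ell)$, and $\xi(\operatorname{Frob}_p) = \id_2 \otimes \Pi$ with $\Pi$ a signed permutation matrix (here one uses that $E$ has no CM, so that Frobenius acts on each $1$-dimensional isogeny factor of $\Jac C^\xi$ by $\pm\id$). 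Hence $F_p^\xi = (A\otimes\id_3)(\id_2\otimes\Pi) = A\otimes\Pi$.

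First I would normalize. Set $\bar A := p^{-1/2} A$, so that $\operatorname{tr}\bar A = b_p$ and $\det\bar A = 1$; thus $\bar A$ has eigenvalues $\alpha,\alpha^{-1}$ with $\alpha + \alpha^{-1} = b_p$, and the conjugacy class of $\bar A$ is determined by $b_p$ alone. Writing $H_p^\xi(t) = \det(t\,\id_6 - A\otimes\Pi)$, a direct substitution shows $h_p^\xi(t) = p^{-3} H_p^\xi(\sqrt{p}\,t) = \det(t\,\id_6 - \bar A\otimes\Pi)$.

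Next I would read off the characteristic polynomial of the Kronecker product from the two spectra. If $\lambda_1,\lambda_2,\lambda_3 \in \overline{\mQ}$ are the eigenvalues of $\Pi$, then the eigenvalues of $\bar A\otimes\Pi$ are $\alpha\lambda_j$ and $\alpha^{-1}\lambda_j$, $j=1,2,3$, so
\[
h_p^\xi(t) = \prod_{j=1}^{3}(t - \alpha\lambda_j)(t-\alpha^{-1}\lambda_j) = \prod_{j=1}^{3}\bigl(t^2 - b_p\,\lambda_j\,t + \lambda_j^2\bigr).
\]
The right-hand side is a symmetric function of $\lambda_1,\lambda_2,\lambda_3$ with coefficients polynomial in $t$ and $b_p$; hence it is a polynomial in $t$, $b_p$, and the elementary symmetric functions of the $\lambda_j$ --- that is, in $t$, $b_p$, and the coefficients of the characteristic polynomial of $\Pi = \xi(\operatorname{Frob}_p)$. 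In particular $h_p^\xi$ is determined by $b_p$ and $\xi(\operatorname{Frob}_p)$, and in fact only by $b_p$ and the conjugacy class of $\xi(\operatorname{Frob}_p)$, as claimed.

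I do not anticipate a serious obstacle: the two substantive inputs --- the identification of the special fibres of $\Jac C$ and $\Jac C^\xi$, and the no-CM property of $E$ forcing the factors $\Pi$ to be signed permutation matrices --- are already in place from~\autoref{lemma:functions-fg}, and the remaining content is just the elementary fact that the spectrum of $\bar A\otimes\Pi$ is the set of products of the two spectra. The one bookkeeping task worth carrying out explicitly (for later use in~\autoref{prop:MomentSequences3}) is to tabulate the resulting polynomial $h_p^\xi(t)$ as $\Pi$ ranges over one representative of each conjugacy class of signed $3\times 3$ permutation matrices, i.e.\ of $S_4\times\mathbb{Z}/2\mathbb{Z}$; this is a finite, routine computation.
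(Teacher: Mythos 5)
Your argument is correct and is essentially the paper's own proof: both identify $F_p^\xi$ with $A\otimes\Pi$ using the setup of Lemma~\ref{lemma:functions-fg} and then expand the characteristic polynomial as $\prod_{j=1}^3\bigl(t^2-b_p\pi_j t+\pi_j^2\bigr)$ after normalization. The only (cosmetic) difference is that you normalize $A$ before computing the characteristic polynomial while the paper substitutes $\sqrt{p}\,t$ afterwards, and your closing remark that the product is symmetric in the $\lambda_j$ makes the dependence on $\xi(\operatorname{Frob}_p)$ alone slightly more explicit.
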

\begin{proof}
The $6 \times 6$ matrix $\operatorname{diag}_3(A)$ representing the automorphism $F_p$ is block-diagonal, each of the three identical $2 \times 2$ blocks $A$ being given by the action of Frobenius on the 2-dimensional vector space $H^1_{\acute{e}t}(E_{\overline{\mathbb{F}_p}},\mQ_\ell)$. As in the proof of Lemma \ref{lemma:functions-fg}, one sees that the matrix $M$ representing the action of $F_p^\xi$ is the tensor product of $A$ with a certain $3 \times 3$ signed permutation matrix $\Pi$ which only depends on $\xi(\operatorname{Frob}_p)$. Let $\alpha_1,\alpha_2$ be the roots of the characteristic polynomial of $A$ and $\pi_1, \pi_2, \pi_3$ be the roots of the characteristic polynomial of $\Pi$. The characteristic polynomial $H_p^\xi(t)$ of $M=A \otimes \Pi$ is then given by
\[
H_p^\xi(t)=\prod_{j=1}^3 \prod_{i=1}^2 (t-\alpha_i \pi_j) = \prod_{j=1}^3 (t^2-(\alpha_1+\alpha_2)\pi_j t+\alpha_1\alpha_2 \pi_j^2).
\]
Thus, we have that
\begin{equation}\label{eq:CharPoly}
\begin{aligned}
h_p^\xi(t) & = \frac{1}{p^3} \prod_{j=1}^3 (p t^2-\sqrt{p}(\alpha_1+\alpha_2)\pi_j t+p \pi_j^2) \\
& = \prod_{j=1}^3 (t^2-\frac{\alpha_1+\alpha_2}{\sqrt{p}}\pi_j t+\pi_j^2) \\
& = \prod_{j=1}^3 (t^2-b_p\pi_j t+\pi_j^2).
\end{aligned}
\end{equation}

Since the $\pi_j$ are determined by $\Pi$, which in turn is determined by $\xi(\operatorname{Frob}_p)$, this proves the lemma.
\end{proof}
As an immediate consequence of the explicit expression for $h_p^\xi(t)$ given by equation \eqref{eq:CharPoly}, we also have the following result, which generalizes Lemma \ref{lemma:functions-fg}:
\begin{coro}\label{cor:higher-distrib-functions}
There exist polynomials $f_{g,i}(x)$ for $i=0, \ldots, 6$ and $g \in \operatorname{Aut}(C')$ such that the following holds: for every cocycle $\xi\colon \abGal \to \Aut(C')$ and every prime $p$, the normalized characteristic polynomial of the Frobenius at $p$ acting on $H^1_{\acute{e}t}((\Jac C^\xi)_{\overline{\mathbb{F}_p}},\mQ_\ell)$ is given by
\[
h^\xi_p(t)=\sum_{i=0}^6 f_{\xi(\operatorname{Frob}_p),i}(b_p) t^i.
\]
Moreover, the polynomials $f_{g,i}(x)$ only depend on $g$ through its conjugacy class in $\Aut(C') \iso S_4 \times \ZZ{2} \subset \operatorname{GL}_3(\mathbb{Z})$, and are given explicitly as follows:
\begin{itemize}
\item $f_{g,0}(x)=f_{g,6}(x)= 1$;
\item $f_{g,1}(x)=f_{g,5}(x)=-\operatorname{tr}(g) x$;
\item $\displaystyle f_{g,2}(x)=f_{g,4}(x) = \operatorname{tr}(g^2) + \frac{x^2}{2} \left( \operatorname{tr}(g)^2 - \operatorname{tr}(g^2) \right)$;
\item $f_{g,3}(x)=-\frac{1}{3} \left( \operatorname{tr}(g)^3-\operatorname{tr}(g^3)-6 \det(g) \right) x - x^3 \det(g)$.
\end{itemize}
Here in order to compute the trace and determinant of $g$ we consider it as a $3 \times 3$ signed permutation matrix.
\end{coro}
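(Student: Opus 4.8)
The strategy is to expand the product formula \eqref{eq:CharPoly} for $h_p^\xi(t)$ and read off the coefficient of each power of $t$ as a symmetric function of the eigenvalues $\pi_1,\pi_2,\pi_3$ of the $3\times 3$ signed permutation matrix $\Pi$ attached to $g=\xi(\operatorname{Frob}_p)$, and then rewrite these symmetric functions in terms of $\operatorname{tr}(\Pi)$, $\operatorname{tr}(\Pi^2)$, $\operatorname{tr}(\Pi^3)$ and $\det(\Pi)$ by means of Newton's identities. Under the identification of \autoref{remark:identification-of-automorphisms-with-jacobian} applied to $C'$ (whose automorphism group is exactly the group of $3\times 3$ signed permutation matrices, since $\operatorname{Aut}(C')\cong S_4\times\ZZ{2}$), the matrix $\Pi$ is precisely the one through which $g$ acts on the three elliptic factors of $(\Jac C')_{\overline{\mathbb{Q}}}\sim E^3$, so $\operatorname{tr}(\Pi^k)=\operatorname{tr}(g^k)$ and $\det(\Pi)=\det(g)$; in particular the resulting expressions depend on $g$ only through conjugation-invariant quantities, hence only through its conjugacy class in $\operatorname{Aut}(C')\hookrightarrow\operatorname{GL}_3(\mathbb{Z})$. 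The identity $h_p^\xi(t)=\sum_{i=0}^6 f_{\xi(\operatorname{Frob}_p),i}(b_p)t^i$ is then simply a restatement of \eqref{eq:CharPoly}.

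Concretely, writing $x=b_p$, letting $e_1,e_2,e_3$ be the elementary symmetric polynomials and $p_1,p_2,p_3$ the power sums of $\pi_1,\pi_2,\pi_3$, I expand $\prod_{j=1}^3(t^2-x\pi_j t+\pi_j^2)$ degree by degree. The coefficients of $t^6$ and $t^0$ are $1$ and $\prod_j\pi_j^2=(\det\Pi)^2=1$; the coefficient of $t^5$ is $-x\sum_j\pi_j=-\operatorname{tr}(g)x$; the coefficient of $t^4$ is $\sum_j\pi_j^2+x^2\sum_{j<k}\pi_j\pi_k=\operatorname{tr}(\Pi^2)+x^2 e_2$; and the coefficient of $t^3$, obtained from the degree-partitions $(1,1,1)$ and $(2,1,0)$, equals $-x^3\prod_j\pi_j-x(p_1p_2-p_3)=-\det(\Pi)x^3-x(p_1p_2-p_3)$. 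Newton's identities for three variables give $e_2=\tfrac{1}{2}(\operatorname{tr}(\Pi)^2-\operatorname{tr}(\Pi^2))$ and $p_1p_2-p_3=\tfrac{1}{3}(\operatorname{tr}(\Pi)^3-\operatorname{tr}(\Pi^3)-6\det(\Pi))$, and substituting these in yields exactly the claimed formulas for $f_{g,4}$, $f_{g,5}$, $f_{g,6}$ and $f_{g,3}$.

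It remains to identify $f_{g,0},f_{g,1},f_{g,2}$, which I would deduce from the functional equation $f_{g,i}=f_{g,6-i}$: since $|\pi_j|=1$ one has $t^2\bigl((1/t)^2-x\pi_j(1/t)+\pi_j^2\bigr)=\pi_j^2\bigl(t^2-x\overline{\pi_j}t+\overline{\pi_j}^2\bigr)$, and the multiset $\{\pi_1,\pi_2,\pi_3\}$ is stable under complex conjugation because the characteristic polynomial of $\Pi$ has integer (hence real) coefficients, so $t^6 h_p^\xi(1/t)=\bigl(\prod_j\pi_j^2\bigr)h_p^\xi(t)=h_p^\xi(t)$; alternatively one simply repeats the expansion for the low-degree coefficients directly. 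This gives $f_{g,0}=f_{g,6}=1$, $f_{g,1}=f_{g,5}=-\operatorname{tr}(g)x$ and $f_{g,2}=f_{g,4}$, completing the proof. The computation is routine symmetric-function bookkeeping; the only points deserving a little care are the reduction of the power sums $p_2,p_3$ of the three eigenvalues to the invariants $\operatorname{tr}(g^2),\operatorname{tr}(g^3),\det(g)$ (and the elimination of $e_2$ along the way), and keeping track of the fact that the relevant matrix is the signed permutation matrix through which $g$ acts on $(\Jac C')_{\overline{\mathbb{Q}}}\sim E^3$, so that "trace" and "determinant" in the statement refer to that matrix.
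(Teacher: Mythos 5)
Your proof is correct and follows the same route the paper intends: the paper presents the corollary as an ``immediate consequence'' of the product formula \eqref{eq:CharPoly} from Lemma~\ref{lemma:higher-i-distrib}, and you have simply carried out the elementary symmetric-function expansion and the Newton-identity reductions (together with the functional-equation argument for the low-degree coefficients) that the paper leaves implicit. The only substantive checks are the identities $e_2=\tfrac12(p_1^2-p_2)$ and $p_1p_2-p_3=\tfrac13(p_1^3-p_3-6e_3)$ and the observation that $\prod_j\pi_j^2=(\det\Pi)^2=1$, all of which you handle correctly.
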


\begin{remark}\label{rem:FunctionsF}
More generally, formula \eqref{eq:CharPoly} shows the following: if $g \in \GL_3(\mathbb{Z})$ is a signed permutation matrix and $A$ is an arbitrary element of $\SU(2)$, then $f_{g,i}(\operatorname{tr} A)$ is the $i\tth$ coefficient of the characteristic polynomial of $A \otimes g$.
\end{remark}

Since the functions $f_{g,i}$ are clearly continuous, the same argument that leads to Proposition \ref{prop:MomentSequences2} shows
\begin{prop}\label{prop:MomentSequences3}
Retaining the notation from Proposition \ref{prop:MomentSequences2}, let 
\[
h^\xi_p(t)=a_{6,p}^\xi t^6+a_{5,p}^\xi t^5+\cdots+a_{1,p}^\xi t+a_{0,p}^\xi
\]
denote the normalized characteristic polynomial of the Frobenius at $p$ acting on $\operatorname{Jac}C^\xi$. For each conjugacy class $D$ in $H$, define $f_{D,i}(x):=f_{h,i}(x)$, where $h$ is any element of $D$ of positive determinant. The functions $f_{D,i}(x)$ are well-defined, and  for each $i=0,\ldots,6$ and every $k \geq 0$ we have
\[
\lim_{x \to \infty} \frac{1}{\pi(x)} \sum_{p \leq x, p \not \in S} (a_{i,p}^\xi)^k = \sum_{D \in \mathcal{D}} \frac{|D|}{|H|} \int_{\SU(2)} f_{D,i}(\operatorname{tr}(x))^k dx.
\]
\end{prop}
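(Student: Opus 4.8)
The plan is to repeat the proof of Proposition~\ref{prop:MomentSequences2} essentially verbatim, the only change being that the scalar functions $f_g$ of Lemma~\ref{lemma:functions-fg} are replaced by the coefficient functions $f_{g,i}$ of Corollary~\ref{cor:higher-distrib-functions}; the statement is a formal consequence of material already in place, since Corollary~\ref{cor:higher-distrib-functions} (resting on Lemma~\ref{lemma:higher-i-distrib}) provides, for all coefficients simultaneously, the analogue of Lemma~\ref{lemma:functions-fg}.

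First I would check that $f_{D,i}$ is well defined. By Corollary~\ref{cor:higher-distrib-functions} the polynomial $f_{g,i}$ depends on $g$ only through its conjugacy class in $\Aut(C')$, and two elements of $H$ conjugate in $H$ are a fortiori conjugate in $\Aut(C')$; hence $f_{h,i}$ is the same for every $h$ in a fixed $H$-conjugacy class $D$, so $f_{D,i}$ is well defined independently of the representative (and in particular of the positive-determinant normalization adopted in the statement, which one checks leaves the integral $\int_{\SU(2)} f_{D,i}(\operatorname{tr} g)^k\,dg$ unchanged anyway, using that $f_{\iota h,i}(x)=(-1)^i f_{h,i}(x)$ together with the substitution $g\mapsto\iota g$ in the integral).

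Next, by Corollary~\ref{cor:higher-distrib-functions}, for every prime $p\notin S$ we have $a_{i,p}^\xi=f_{\xi(\operatorname{Frob}_p),i}(b_p)=f_{D(p),i}(b_p)$, where $D(p)$ is the $H$-conjugacy class of $\xi(\operatorname{Frob}_p)$ and $b_p$ is the normalized trace of $\operatorname{Frob}_p$ on $E$. Partitioning the primes by $D(p)$, exactly as in Lemma~\ref{lemma:sum-over-conjugacy-classes}, gives
\[
\sum_{p\le x,\,p\notin S}(a_{i,p}^\xi)^k=\sum_{D\in\mathcal D}\ \sum_{\substack{p\le x,\,p\notin S\\ \xi(\operatorname{Frob}_p)\in D}} f_{D,i}(b_p)^k.
\]
For a fixed $D$, the primes $p$ with $\xi(\operatorname{Frob}_p)\in D$ are precisely those whose Artin symbol in the extension $K_\xi/\mQ$ (the fixed field of $\ker\xi$, so that $\operatorname{Gal}(K_\xi/\mQ)\cong H$) lies in $D$. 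Since $f_{D,i}$ is a polynomial, hence continuous, the refined Sato\--Tate conjecture for the non-CM elliptic curve $E$ (which asserts the equidistribution of $b_p$ over primes having a prescribed Artin symbol in $K_\xi/\mQ$, cf.~\cite[Theorem~1]{MR3012726}) together with Chebotarev's density theorem yields, exactly as in Lemma~\ref{lemma:average-trace},
\[
\frac{1}{\pi(x)}\sum_{\substack{p\le x,\,p\notin S\\ \xi(\operatorname{Frob}_p)\in D}} f_{D,i}(b_p)^k\ \longrightarrow\ \frac{|D|}{|H|}\int_{\SU(2)} f_{D,i}(\operatorname{tr} g)^k\,dg.
\]
Summing over the finitely many $D\in\mathcal D$ and dividing by $\pi(x)$ gives the claimed formula.

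As for the main obstacle: there is no substantial difficulty here. All the genuine content was already extracted in Lemma~\ref{lemma:higher-i-distrib} and Corollary~\ref{cor:higher-distrib-functions}, where $h_p^\xi$ was expressed through the single real parameter $b_p$ and the conjugacy class of $\xi(\operatorname{Frob}_p)$; the only points needing (minor) care are the well-definedness of $f_{D,i}$ discussed above and the availability of refined Sato\--Tate for $E$, which holds because $E/\mQ$ is non-CM. Given these, the proof of Proposition~\ref{prop:MomentSequences2} transcribes coefficient by coefficient.
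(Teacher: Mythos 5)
Your proposal is correct and takes essentially the same approach as the paper: the paper's proof of this proposition is literally the single remark that ``since the functions $f_{g,i}$ are clearly continuous, the same argument that leads to Proposition~\ref{prop:MomentSequences2} shows [the claim],'' and you have simply expanded that remark into a complete argument, replacing the scalar function $f_g$ of Lemma~\ref{lemma:functions-fg} by the coefficient functions $f_{g,i}$ of Corollary~\ref{cor:higher-distrib-functions} and rerunning Lemmas~\ref{lemma:sum-over-conjugacy-classes} and~\ref{lemma:average-trace}. Your parenthetical justification of the positive-determinant normalization is a little loosely worded (the relevant substitution is $A \mapsto -A$ in $\SU(2)$, using that $f_{g,i}(-y)=(-1)^i f_{g,i}(y)$ and $f_{\iota h,i}(y)=(-1)^i f_{h,i}(y)$, so that the integral is insensitive to replacing a representative $h$ by $\iota h$), but the conclusion and the substance of the argument are both right.
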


\subsection{Completing the Proof}
\label{subsection:completing-the-proof}
For each of the Sato\--Tate groups corresponding to twists of our curve $C'$, we  find in~\autoref{prop:MomentSequences4} the pushforward of the Haar measure along each coefficient of the normalized characteristic polynomial. We conclude our proof of the main Theorem~\ref{thm:main} in~\autoref{thm:ProofST}.

\begin{prop}\label{prop:MomentSequences4}
Let $G$ be a subgroup of $\operatorname{ST}_{\operatorname{Tw}}(C)$ such that $G^0=\operatorname{ST}_{\operatorname{Tw}}(C)^0$; let $H$ be the group of components of $G$, seen as a subgroup of $S_4 \subset \{x \in \GL_3(\mathbb{Z}) : \det x =1 \}$; let $\Phi_i \colon G \to \mathbb{R}$ for $i=0,\ldots, 6$ be the map sending $g \in G$ to the $i\tth$ coefficient of the characteristic polynomial of $g$; let $\mu$ be the Haar measure on $G$. By pushing forward $\mu$ along $\Phi_i$, we obtain probability measures $\mu_i$ on $\mathbb{R}$, and we have
\[
\mathbb{E}_{\mu_i}[x^k] = \sum_{D \in \mathcal{D}} \frac{|D|}{|H|} \int_{\SU(2)} f_{D,i}(\operatorname{tr}(x))^k dx
\]
where $k \geq 0$ and the functions $f_{D,i}$ were defined in Corollary \ref{cor:higher-distrib-functions}.
\end{prop}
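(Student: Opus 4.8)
The plan is to unwind the normalized Haar measure on the disconnected compact group $G$, reducing the computation to a weighted sum of integrals over $\SU(2)$ in which Remark~\ref{rem:FunctionsF} identifies the coefficient maps $\Phi_i$ explicitly. The whole argument is a more-or-less formal elaboration of the computation that produced Proposition~\ref{prop:MomentSequences3}.

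First I would record the structure of $G$. Writing $\mathbb{C}^6 = \mathbb{C}^2 \otimes \mathbb{C}^3$, the hypothesis $G^0 = \ST_{\Tw}(C)^0$ means $G^0 = \SU(2)_3 = \{A \otimes \id_3 : A \in \SU(2)\}$, and, by the explicit description in Remark~\ref{remark:identification-of-automorphisms-with-jacobian}, the components of $\ST_{\Tw}(C)$ (hence those of $G$) admit representatives of the form $\id_2 \otimes \Pi$ with $\Pi$ a $3 \times 3$ signed permutation matrix of determinant $1$; assigning to such a representative its class realizes the group of components of $G$ as a subgroup $H \subseteq S_4$. Consequently every element of $G$ is uniquely of the form $(\id_2 \otimes \Pi)(A \otimes \id_3) = A \otimes \Pi$ with $A \in \SU(2)$, and $A \otimes \id_3 \mapsto A$ is an isomorphism of compact groups $G^0 \cong \SU(2)$. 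By Remark~\ref{rem:FunctionsF}, for such an element one has $\Phi_i(A \otimes \Pi) = f_{h,i}(\operatorname{tr} A)$, where $h \in H$ is the class of $\Pi$; that is, on the component of $G$ lying over $h$ the map $\Phi_i$ is $A \otimes \Pi \mapsto f_{h,i}(\operatorname{tr} A)$.

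Next I would carry out the measure computation. Since $G^0$ is open of index $|H|$ in $G$, the normalized Haar measure $\mu$ gives mass $1/|H|$ to each component, and on the component over $h$ it is the image, under $A \mapsto A \otimes \Pi$, of $1/|H|$ times the Haar probability measure $dA$ on $\SU(2)$ (a continuous isomorphism of compact groups transports Haar measure to Haar measure). As $\mu$ is a probability measure and $\Phi_i$ is continuous, $\mu_i := (\Phi_i)_*\mu$ is again a probability measure, and for every $k \geq 0$,
\[
\mathbb{E}_{\mu_i}[x^k] = \int_G \Phi_i(g)^k\, d\mu(g) = \frac{1}{|H|}\sum_{h \in H} \int_{\SU(2)} f_{h,i}(\operatorname{tr} A)^k \, dA.
\]
By Corollary~\ref{cor:higher-distrib-functions} the polynomial $f_{h,i}$ depends on $h$ only through its conjugacy class in $\operatorname{Aut}(C') \cong S_4 \times \ZZ{2}$, hence only through its conjugacy class $D$ in $H$; and since every element of $H \subseteq S_4$ has determinant $1$, there is no ambiguity in choosing a representative of positive determinant, so writing $f_{D,i}$ for the common value and collecting the terms of the sum above according to conjugacy classes gives
\[
\mathbb{E}_{\mu_i}[x^k] = \sum_{D \in \mathcal{D}} \frac{|D|}{|H|} \int_{\SU(2)} f_{D,i}(\operatorname{tr} A)^k \, dA,
\]
which is the assertion.

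Essentially the only point requiring care is the bookkeeping for the Haar measure on the disconnected group $G$ — namely that each coset of $G^0$ contributes its $\SU(2)$-average with weight $1/|H|$ — together with the identification $G^0 \cong \SU(2)$, which lets one replace integration over $G^0$ by integration over $\SU(2)$; once these are in place the result follows by direct substitution of Remark~\ref{rem:FunctionsF} and Corollary~\ref{cor:higher-distrib-functions}. No serious obstacle is expected.
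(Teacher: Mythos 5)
Your proof is correct and follows essentially the same approach as the paper: decompose $G$ into its connected components, identify each with $\SU(2)$ via $A \mapsto A \otimes \Pi$, invoke Remark~\ref{rem:FunctionsF} to write $\Phi_i$ on each component as $f_{\Pi,i}(\operatorname{tr} A)$, sum over components with weight $1/|H|$, and group by conjugacy class in $H$. You are slightly more explicit about the Haar-measure bookkeeping and about pinning down the sign ambiguity in $\Pi$ by choosing representatives of positive determinant, a point the paper defers to a remark following Proposition~\ref{prop:computemoments}, but the argument is the same.
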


\begin{proof}
For every connected component $M$ of $G$, there is a signed permutation matrix $\Pi_M \in \GL_3(\mathbb{Z})$ such that
\[
A \in \SU(2) \mapsto A \otimes \Pi_M \in M
\]
is a bianalytic bijection between $\SU(2)$ and $M$. Furthermore, one sees that the characteristic polynomial of $A \otimes \Pi_M$ is determined by $A$ and by $\Pi_M$. Thus one obtains
\[
\begin{aligned}
\mathbb{E}_{\mu_i}[x^k]
& = \int_{G} \Phi_i(x)^n \, d\mu_{\ST(C^\xi)}(x) \\
& = \sum_{M} \int_M  \Phi_i(x)^n \, d\mu_{\ST(C^\xi)}(x) \\
& = \sum_{M} \frac{1}{[G:G^0]} \int_{\SU(2)}  \Phi_i(A \otimes \Pi_M)^n \, d\mu_{SU(2)}(A) \\
& = \sum_{M} \frac{1}{|H|} \int_{\SU(2)}  f_{M,i}(\operatorname{tr}(A))^n \, d\mu_{SU(2)}(A) ,
\end{aligned}
\]
where we have set $f_{M,i}(x):=f_{\Pi_M,i}(x)$ and $f_{g,i}(x)$ is defined in the statement of Corollary \ref{cor:higher-distrib-functions} (notice that we can consider $\Pi_M$ as an element of $S_4 \times \mathbb{Z}/2\mathbb{Z} \subset \GL_3(\mathbb{Z})$, see Remark \ref{remark:identification-of-automorphisms-with-jacobian}). We obtain the formula in the statement by grouping together elements that belong to the same conjugacy class in $H$.
\end{proof}

\begin{theorem}\label{thm:ProofST}
Let $C^\xi$ be any twist of $C'$. The generalized Sato\--Tate conjecture is true for $C^\xi$.
\end{theorem}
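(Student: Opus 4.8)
The strategy is to deduce Conjecture~\ref{conj:ST} for $C^\xi$ from a matching of moment sequences. For each $i \in \{0,\dots,6\}$ I would show that the sequence $\{a_{i,p}^\xi\}_{p \notin S}$ and the measure $\Phi_{i,*}\bigl(\mu(\ST(\Jac C^\xi))\bigr)$ have the same $k\tth$ moment for every $k \ge 0$, and then invoke the moment criterion (Remark~2.5 of \cite{sutherland2016sato}): a Borel probability measure on a compact interval is determined by its moment sequence, and a sequence of points in that interval all of whose moments exist is equidistributed with respect to the unique such measure. Deleting the finite set $S$ of bad primes does not change the limiting moments, so this suffices.

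The first step is to pin down $\ST(\Jac C^\xi)$ precisely enough to apply Proposition~\ref{prop:MomentSequences4}. Since $\ST(C')$ is connected (Proposition~\ref{prop:findingtwist}), we have $\Lefs_{C'}(\sigma) = \SL(2)_3$ for every $\sigma \in G_{\mQ}$, so Lemma~\ref{lem:howtotwist} gives
\[
\AST(\Jac C^\xi) = \bigcup_{\sigma \in G_{\mQ}} \SL(2)_3 \cdot \xi(\sigma)^{-1} = \SL(2)_3 \cdot \xi(G_{\mQ}).
\]
Hence $\ST(\Jac C^\xi)^0 = \SU(2)_3 = \ST_{\Tw}(C)^0$, and the group of components of $\ST(\Jac C^\xi)$ is $\xi(G_{\mQ})/\bigl(\xi(G_{\mQ}) \cap \SL(2)_3\bigr)$. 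Under the embedding $\Aut(C') \hookrightarrow \Sp(6)$ of Proposition~\ref{proposition:twisting-sato-tate-group-of-C}, the only automorphisms landing in $\SL(2)_3$ are $\id$ and the hyperelliptic involution $\iota$, which acts as $-\id$; therefore this component group is the image $H'$ of $H := \xi(G_{\mQ})$ under the projection $\Aut(C') \cong S_4 \times \ZZ{2} \twoheadrightarrow S_4$ recorded in Remark~\ref{remark:identification-of-automorphisms-with-jacobian}. In particular $G := \ST(\Jac C^\xi)$ satisfies the hypothesis $G^0 = \ST_{\Tw}(C)^0$ of Proposition~\ref{prop:MomentSequences4}.

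Next I would combine the two moment computations already available. On the arithmetic side, Proposition~\ref{prop:MomentSequences3} shows that $\lim_{x\to\infty}\pi(x)^{-1}\sum_{p\le x,\,p\notin S}(a_{i,p}^\xi)^k$ exists and equals
\[
\sum_{D \in \mathcal{D}(H)} \frac{|D|}{|H|} \int_{\SU(2)} f_{D,i}(\operatorname{tr} x)^k\,dx ,
\]
while on the group side Proposition~\ref{prop:MomentSequences4} applied to $G$ computes
\[
\mathbb{E}_{\Phi_{i,*}(\mu(\ST(\Jac C^\xi)))}[x^k] = \sum_{D' \in \mathcal{D}(H')} \frac{|D'|}{|H'|} \int_{\SU(2)} f_{D',i}(\operatorname{tr} x)^k\,dx .
\]
The remaining, and only non-formal, point is that these two expressions agree. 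This is a short piece of bookkeeping: $\iota$ is central with $\det \iota = -1$, so each conjugacy class of $H'$ is the image of one or two conjugacy classes of $H$, of equal cardinality, whose weights $|D|/|H|$ add up to $|D'|/|H'|$; and by the explicit formulas of Corollary~\ref{cor:higher-distrib-functions} one has $f_{g\iota,i} = (-1)^i f_{g,i}$. Since the trace distribution on $\SU(2)$ is symmetric about $0$ and $f_{g,i}$ is an even or odd polynomial according to the parity of $i$, the integrals contributed by $g$ and by $g\iota$ coincide for every $k$. Hence the two moment sequences are identical.

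Applying the moment criterion for each $i$ then shows that $\{a_{i,p}^\xi\}$ is equidistributed with respect to $\Phi_{i,*}\bigl(\mu(\ST(\Jac C^\xi))\bigr)$, which is exactly Conjecture~\ref{conj:ST} for $\Jac C^\xi$; since every $\mQ$-twist of $C_0$ is a $\mQ$-twist of $C'$, this simultaneously completes the proof of Theorem~\ref{thm:main}(3). I expect the main point requiring care to be the reconciliation in the previous step---matching the group $H = \xi(G_{\mQ}) \subseteq S_4 \times \ZZ{2}$ governing the Frobenius traces with its image $H' \subseteq S_4$ governing the Haar measure---i.e.\ verifying that the hyperelliptic involution, which acts as $-\id$ on $H_1$ and hence lies in the identity component of the Sato\--Tate group, may be disregarded throughout.
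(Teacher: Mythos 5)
Your proof is correct and follows the same route as the paper's: combine Propositions~\ref{prop:MomentSequences3} and~\ref{prop:MomentSequences4}, observe that the two moment sequences coincide, and invoke the moment criterion. Where you go further is in spelling out the reconciliation that the paper's three-sentence proof leaves implicit: Proposition~\ref{prop:MomentSequences3} sums over the conjugacy classes of $H = \xi(\abGal) \subseteq S_4 \times \ZZ{2}$, while Proposition~\ref{prop:MomentSequences4} sums over the conjugacy classes of the component group $H' \subseteq S_4$, and these are not the same group when $\iota \in H$. Your bookkeeping---that $H' \cong H/(H \cap \{\pm\id\})$, that fibers of $H \twoheadrightarrow H'$ pair conjugacy classes of equal size whose weights add correctly, and that $f_{g\iota,i} = (-1)^i f_{g,i}$ together with the symmetry of the $\SU(2)$ trace distribution kills the sign ambiguity---is exactly the content underlying the phrase ``any element of $D$ of positive determinant'' in Proposition~\ref{prop:MomentSequences3} and the remark at the end of the proof of Proposition~\ref{prop:computemoments}. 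So this is not a different proof but a welcome fleshing out of the step the paper takes for granted.
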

\begin{proof}
Let $a_{i,p}^\xi$ be the $i\tth$ coefficient of the normalized characteristic polynomial of the Frobenius at $p$ acting on $\Jac C^\xi$, $\mu$ be the Haar measure of $\operatorname{ST}(C^\xi)$, and $\Phi_i \colon \operatorname{ST}(C^\xi) \to \mathbb{R}$ be the map giving the $i\tth$ coefficient of the characteristic polynomial of an endomorphism. Propositions \ref{prop:MomentSequences3} and \ref{prop:MomentSequences4} imply that for $i=0,\ldots, 6$ the moments of the sequence $(a_{i,p}^\xi)_{p }$ are the same as the moments of the measure $\Phi_{i*} (\mu)$.
This implies that for fixed $\xi$ and $i$ the sequence $(a_{i,p}^\xi)_{p }$ is equidistributed 
with respect to $\Phi_{i*} (\mu)$.
\end{proof}

\section{Computing moment sequences of twists}\label{sect:MomentSequences}
Below in Proposition \ref{prop:computemoments}, we compute the Sato\--Tate distribution of the each twist by summing the contribution corresponding to the cycle types over each component in the Sato\--Tate group
of that twist; the complete list of distributions is given in
Table \ref{Table4}. We also obtain explicit formulas for the moment sequences for the coefficients $a_i(A)(p)$ for $i = 1,2,3$ of each twist.

\begin{prop}\label{prop:computemoments}
Let $C^\xi$ be a twist of $C'$ and let $G$ be the component group of $\operatorname{ST}(C^\xi)$; for $j=2,3,4$, let $m_j$  be the number of elements in $G$ of order $j$, $\mu$ be the Haar measure of $\ST(C^\xi)$. Let $\Phi_i \colon \operatorname{ST}(C^\xi) \to \mathbb{R}$ be the map giving the $i\tth$ coefficient of the characteristic polynomial of an endomorphism; let $\mu_i$ be the pushforward of $\mu$ along $\Phi_i$. For $n>0$, the moment sequences of $\mu_1, \mu_2,$ and $ \mu_3$ are given by:
\begin{align*}
& \mE_{\mu_1}[x^n] = \frac{(3^n+m_2+m_4)}{|G|}\cdot \mE_{\phi}[x^n],\\
&\mE_{\mu_2}[x^n]=\frac{1}{|G|} \mathbb{E}_\phi[(3+3x^2)^n] + \frac{m_2}{|G|} \mathbb{E}_\phi[(3-x^2)^n] + \frac{m_4}{|G|} \mathbb{E}_\phi[(-1+x^2)^n], \\
&\mE_{\mu_3}[x^n]=
\frac{1}{|G|} \mathbb{E}_\phi[(x^3+6x)^n] + \frac{m_2+m_4}{|G|} \mathbb{E}_\phi[(x^3-2x)^n] + \frac{m_3}{|G|} \mathbb{E}_\phi[(3x-x^3)^n]
\end{align*}
for $n=0$ we trivially have $\mE_{\mu_i}[x^0]=1$ for $i=1,2,3$.
\end{prop}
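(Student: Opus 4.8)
The plan is to derive Proposition~\ref{prop:computemoments} directly from Proposition~\ref{prop:MomentSequences4}, by making the functions $f_{D,i}$ completely explicit for $i=1,2,3$ and then reorganizing the resulting sum according to the orders of the elements of $G$.

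First I would rewrite the conclusion of Proposition~\ref{prop:MomentSequences4} as a sum over group elements rather than conjugacy classes. Since $f_{D,i}$ depends on $D$ only through the conjugacy class of a representative,
\[
\mathbb{E}_{\mu_i}[x^k] \;=\; \frac{1}{|G|} \sum_{g \in G} \int_{\SU(2)} f_{g,i}(\operatorname{tr}(A))^k \, dA \;=\; \frac{1}{|G|} \sum_{g \in G} \mathbb{E}_\phi\!\left[ f_{g,i}(x)^k \right],
\]
where the last equality uses that, by the Weyl integration formula, $\phi$ is the image of the Haar measure of $\SU(2)$ under $A \mapsto \operatorname{tr}(A)$, so that $\int_{\SU(2)} h(\operatorname{tr}(A))^k \, dA = \mathbb{E}_\phi[h(x)^k]$ for every continuous $h$.

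Next I would evaluate the polynomials $f_{g,i}$ of Corollary~\ref{cor:higher-distrib-functions} for $i=1,2,3$. The key point is that each $g \in G$ is realized as a $3 \times 3$ signed permutation matrix of determinant $1$, hence as an element of $\operatorname{SO}(3)$, and for such a matrix the quantities $\operatorname{tr}(g)$, $\operatorname{tr}(g^2)$, $\operatorname{tr}(g^3)$ and $\det(g)$ depend only on the order of $g$: one has $\det(g)=1$ always, a rotation of order $2,3,4$ has trace $-1,0,1$ respectively, and the traces of $g^2$ and $g^3$ follow since $g^2$ has order $1,3,2$ and $g^3$ has order $2,1,4$ in those three cases (using $\operatorname{tr}(g^3)=\operatorname{tr}(g^{-1})=\operatorname{tr}(g)$ when $g$ has order $4$). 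Substituting into Corollary~\ref{cor:higher-distrib-functions}, one finds that, according as $g$ has order $1,2,3,4$: the polynomial $f_{g,1}(x)$ equals $-3x,\, x,\, 0,\, -x$; the polynomial $f_{g,2}(x)$ equals $3+3x^2,\, 3-x^2,\, 0,\, -1+x^2$; and the polynomial $f_{g,3}(x)$ equals $-x^3-6x,\, -x^3+2x,\, -x^3+3x,\, -x^3+2x$.

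Finally I would sum over $g \in G$, grouping the $1+m_2+m_3+m_4$ elements by their order and dividing by $|G|$. Matching the stated identities then requires only sign manipulations, justified by the symmetry of $\phi$ about $0$: for any odd polynomial $\psi$ and any $k \geq 0$ one has $\mathbb{E}_\phi[(-\psi)^k]=\mathbb{E}_\phi[\psi^k]$ (both sides vanishing for odd $k$), while $\mathbb{E}_\phi[0^k]=0$ for $k>0$, so the order-$3$ elements contribute nothing to $\mu_1$ and $\mu_2$. Thus $\mathbb{E}_\phi[(-3x)^n]=3^n\mathbb{E}_\phi[x^n]$, $\mathbb{E}_\phi[(-x^3-6x)^n]=\mathbb{E}_\phi[(x^3+6x)^n]$ and $\mathbb{E}_\phi[(-x^3+2x)^n]=\mathbb{E}_\phi[(x^3-2x)^n]$, and collecting the contributions of the four orders yields exactly the displayed formulas for $\mathbb{E}_{\mu_1}$, $\mathbb{E}_{\mu_2}$ and $\mathbb{E}_{\mu_3}$; the case $n=0$ is immediate since each $\mu_i$ is a probability measure. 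The argument is essentially bookkeeping, so I do not anticipate a genuine obstacle; the only point demanding care is tracking the sign conventions of Corollary~\ref{cor:higher-distrib-functions} and verifying that each $f_{g,i}$ may be replaced by a sign-normalized representative without altering any moment, which is precisely where the symmetry of $\phi$ enters.
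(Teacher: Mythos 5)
Your proposal is correct and follows essentially the same route as the paper: both arguments reduce to evaluating the polynomials $f_{g,i}$ of Corollary~\ref{cor:higher-distrib-functions} on representatives of the components, identifying $\int_{\SU(2)}h(\operatorname{tr}A)^k\,dA$ with $\mathbb{E}_\phi[h^k]$, and summing over the component group using the symmetry of $\phi$ to absorb signs. The only (cosmetic) difference is that you invoke Proposition~\ref{prop:MomentSequences4} directly and organize the sum by element order in $\operatorname{SO}(3)$, whereas the paper redoes the component-by-component integration with explicit signed permutation matrices for each of the five cycle types of $S_4$; your trace values and resulting formulas all check out.
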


\begin{remark}
Recall that $\ST(C^\xi)$ is a subgroup of $\SU(6)$, so the characteristic polynomial of any $g \in \ST(C^\xi)$ is symmetric, that is, we have $\Phi_i=\Phi_{6-i}$ and therefore $\mu_i=\mu_{6-i}$. In particular, \autoref{prop:computemoments} also gives the moment sequences of the measures $\mu_4, \mu_5$. Finally, $\Phi_0=\Phi_6$ is the constant function 1, so the moments of $\mu_0=\mu_6$ are all equal to 1.
\end{remark}

\begin{proof}
Our purpose is to compute, for every $n \geq 0$, the integrals
\begin{align*}
\int_{\ST(C^\xi)} \Phi_1(x)^n \, d\mu_{\ST(C^\xi)}(x) , \quad
\int_{\ST(C^\xi)} \Phi_2(x)^n \, d\mu_{\ST(C^\xi)}(x)  ,  \quad \int_{\ST(C^\xi)} \Phi_3(x)^n \, d\mu_{\ST(C^\xi)}(x) ,
\end{align*}
which as above we write as $\sum_{M} \int_M \Phi_i(x)^n \, dx$.
Notice first that the functions $\Phi_i \colon \ST(C^\xi) \to \mathbb{R}$ are invariant under conjugation in $\operatorname{GL}_6(\mathbb{R})$, so that in particular they are also invariant under conjugation in $\ST_{\Tw}(C)$ since $\ST(C^\xi) \subseteq \ST_{\Tw}(C) \subset \GL_6(\mathbb{R})$. 

Secondly, observe that the Haar measure of $\ST(C^\xi)$ is invariant under conjugation as well, which implies that different connected components of $\ST(C^\xi)$ that are conjugated in $\ST_{\Tw}(C)$ give the same contribution to the integral $\int_{\ST(C^\xi)} \Phi_i(x)^n \, d\mu_{\ST(C^\xi)}(x)$. Since the connected components of $\ST(C^\xi)$ are a subset of the connected components of $\ST_{\Tw}(C)$, it suffices to compute
\[
\int_M \Phi_i(x)^n \, d\mu_{\ST_{\Tw}(C)}(x)
\]
for every connected component $M$ of $\ST_{\Tw}(C)$. If $M$ is a connected component of $\ST(C^\xi)$, then
\[
\int_M \Phi_i(x)^n \, d\mu_{\ST(C^\xi)}(x) = \frac{|\ST_{\Tw}(C)/\ST_{\Tw}(C)^0|}{|G|} \int_M \Phi_i(x)^n \, d\mu_{\ST_{\Tw}(C)}(x)
\]
because the total mass of $M$ for the Haar measure of $\ST(C^\xi)$ (resp.~$\ST_{\Tw}(C)$) is $1/|G|$ (resp.~$1/|\ST_{\Tw}(C)/\ST_{\Tw}(C)^0|$), so we will need to account for this rescaling in what follows.


Now notice that if $\sigma_1, \sigma_2 \in S_4$ are permutations that index two connected components $M_1, M_2$ of $\ST_{\Tw}(C)$, then $M_1, M_2$ are conjugate in $\ST_{\Tw}(C)$ if and only if $\sigma_1, \sigma_2$ are conjugate in $S_4$, which occurs if and only if $\sigma_1$, $\sigma_2$ have the same cycle type. Thus, it suffices to understand the contribution of the five families of connected components, corresponding to the five cycle-types in $S_4$.
\begin{enumerate}
\item \underline{Identity permutation}, $M=\ST(C^\xi)^0$: In this case, we have $\Pi_M=\id_3$, the $3 \times 3$ identity matrix. In order to employ the formulas from Corollary \ref{cor:higher-distrib-functions}, we first compute that $\det(\Pi_M)=1$ and $\operatorname{tr}(\Pi_M)=\operatorname{tr}(\Pi_M^2)=\operatorname{tr}(\Pi_M^3)=3$. We obtain $f_{\Pi_M,1}(x)= 3x$, $f_{\Pi_M,2}(x)=3+3x^2$, and $f_{\Pi_M, 3}(x)=-x^3-6x$, that is, the equalities
\begin{align*}
\Phi_1(A\otimes \Pi_M) = f_{\Pi_M,1}(\operatorname{tr}(A)) &= -3\operatorname{tr}(A), \\
\Phi_2(A \otimes \Pi_M)=f_{\Pi_M,2}(\operatorname{tr}(A))&=3+3\operatorname{tr}(A)^2 ,\\
\Phi_3(A \otimes \Pi_M)=f_{\Pi_M,3}(\operatorname{tr}(A))&=-\operatorname{tr}(A)^3-6\operatorname{tr}(A),
\end{align*}
hold for all $A \in \SU(2)$. Now observe that $\ST_{\Tw}(C)$ has 24 connected components and that $\ST_{\Tw}(C)^0$ is isomorphic to $\SU(2)$, and hence the restriction of $\mu_{\ST_{\Tw}(C)}$ to $\ST_{\Tw}(C)^0 $ is $\frac{1}{24}$-th of the standard Haar measure on $\SU(2)$. 
It follows that
\begin{align*}
|G| \int_{\ST(C^\xi)^0} \Phi_1(x)^n \, d\mu_{\ST(C^\xi)}(x)
& = 24 \int_{\ST_{\Tw}(C)^0} \Phi_1(A \otimes \id)^n \, d\mu_{\ST_{\Tw}(C)}(A) \\
& = \int_{\SU(2)} \Phi_1(A \otimes \id)^n \, d\mu_{SU(2)}(A) \\
& =\int_{\SU(2)} (-3\operatorname{tr}(A))^n \, d\mu_{SU(2)}(A) \\
& = \mathbb{E}_\phi\left[(-3x)^n\right] = \mathbb{E}_\phi\left[(3x)^n\right],
\end{align*}
\[
\begin{aligned}
|G| \int_{\ST(C^\xi)^0} \Phi_2(x)^n \, d\mu_{\ST(C^\xi)}(x)
& = \int_{\SU(2)} \Phi_2(A \otimes \id)^n \, d\mu_{SU(2)}(A) \\
& =\int_{\SU(2)} (3+3\operatorname{tr}(A)^2)^n \, d\mu_{SU(2)}(A)  \\ &= \mathbb{E}_\phi\left[3^n(1+x^2)^n\right],
\end{aligned}
\]
and likewise
\[
\begin{aligned}
|G| \int_{\ST(C^\xi)^0} \Phi_3(x)^n \, dx & = \int_{\SU(2)} \Phi_3(A \otimes \id)^n \, d\mu_{SU(2)}(A) \\
& =\int_{\SU(2)} (-\operatorname{tr}(A)^3-6\operatorname{tr}(A))^n \, d\mu_{SU(2)}(A) \\
& = \mathbb{E}_\phi\left[(-1)^n (x^3+6x)^n\right] \\
& = \mathbb{E}_\phi\left[(x^3+6x)^n\right],
\end{aligned}
\]
where in the last equality we have used the fact that $\phi$ is symmetric with respect to zero.

\item \underline{Order-three permutation}, $M=\begin{psmallmatrix} & & \id \\
\id \\
& \id \end{psmallmatrix} \ST(C^\xi)^0$: In this case we can take $\Pi_M=\begin{psmallmatrix} & & 1 \\ 1 \\ & 1 \end{psmallmatrix}$. We have $\operatorname{tr}(\Pi_M)=\operatorname{tr}(\Pi_M^2)=0$, $\operatorname{tr}(\Pi_M^3)=3$ and $\det(\Pi_M)=1$, whence $f_{\Pi_M,1}(x)=0$, $f_{\Pi_M,2}(x)=0$ and $f_{\Pi_M,3}(x)=3x-x^3$. Proceeding as in the previous case, we obtain
\begin{align*}
\int_M \Phi_1(x)^n \, d\mu_{\ST(C^\xi)}(x) &= 0 ,\\
\int_M \Phi_2(x)^n \, d\mu_{\ST(C^\xi)}(x) &= 0 ,\\
\int_M \Phi_3(x)^n d\mu_{\ST(C^\xi)}(x) &= \frac{1}{|G|}\mathbb{E}_\phi[(3x-x^3)^n].
\end{align*}
\item \underline{Transposition}, $M=\begin{psmallmatrix}
& \id \\ \id \\ & & \id
\end{psmallmatrix}\ST(C^\xi)^0$: We have $\Pi_M=\begin{psmallmatrix}
& 1 \\ 1 \\ & & 1
\end{psmallmatrix}$, $\operatorname{tr}(\Pi_M)=\operatorname{tr}(\Pi_M^3)=1$, $\operatorname{tr}(\Pi_M^2)=3$, and $\det(\Pi_M)=-1$. It follows that $f_{\Pi_M,1}(x)=-x$, $f_{\Pi_M,2}(x)=3-x^2$ and $f_{\Pi_M,3}(x)=x^3-2x$, whence
\begin{align*}
\int_M \Phi_1(x)^n \, d\mu_{\ST(C^\xi)}(x) &= \frac{1}{|G|}\mathbb{E}_{\phi}[x^n], \\
\int_M \Phi_2(x)^n \, d\mu_{\ST(C^\xi)}(x) &= \frac{1}{|G|}\mathbb{E}_{\phi}[(3-x^2)^n] ,\\
\int_M \Phi_3(x)^n d\mu_{\ST(C^\xi)}(x) &= \frac{1}{|G|}\mathbb{E}_\phi[(x^3-2x)^n].
\end{align*}
\item \underline{Double transposition},  $M=\begin{psmallmatrix}
\id \\ & \id \\ & & -\id
\end{psmallmatrix}\ST(C^\xi)^0$: We have $\Pi_M=\begin{psmallmatrix}
1 \\ & 1 \\ & & -1
\end{psmallmatrix}$, and since $\begin{psmallmatrix}
& 1 \\ 1 \\ & & 1
\end{psmallmatrix}$ and $\Pi_M$ are similar in $\operatorname{GL}_3(\mathbb{R})$, we have the same values for each of the integrals as in (3).
\item \underline{Four-cycle}, $M=\begin{psmallmatrix}
& -\id \\ \id \\ & & \id
\end{psmallmatrix}\ST(C^\xi)^0$: By setting $\Pi_M=\begin{psmallmatrix}
& -1 \\ 1 \\ & & 1
\end{psmallmatrix}$, we can compute that $\operatorname{tr}(\Pi_M)=\operatorname{tr}(\Pi_M^3)=1$, $\operatorname{tr}(\Pi_M^2)=-1$, $\det(\Pi_M)=1$, and whence $f_{\Pi_M,1}(x)=-x$, $f_{\Pi_M,2}(x)=x^2-1$ and $f_{\Pi_M,3}(x)=2x-x^3$. Notice that
\begin{align*}
\int_{\SU(2)} (2\operatorname{tr}(A)-\operatorname{tr}(A)^3)^n \, d\mu_{SU(2)}(A) = \int_{\SU(2)} (-2\operatorname{tr}(A)+\operatorname{tr}(A)^3)^n \, d\mu_{SU(2)}(A)
\end{align*}
since both integrals vanish if $n$ is odd, and they are clearly equal if $n$ is even. Thus we obtain
\begin{align*}
\int_M \Phi_1(x)^n \, d\mu_{\ST(C^\xi)}(x) &= \frac{1}{|G|}\mathbb{E}_{\phi}[x^n], \\
\int_M \Phi_2(x)^n \, d\mu_{\ST(C^\xi)}(x) &= \frac{1}{|G|}\mathbb{E}_{\phi}[(x^2-1)^n] ,\\
\int_M \Phi_3(x)^n d\mu_{\ST(C^\xi)}(x) &= \frac{1}{|G|}\mathbb{E}_\phi[(x^3-2x)^n].
\end{align*}
\end{enumerate}
Finally, we need to sum over the connected components of each type: clearly there is precisely one component of type (1), and there are $m_3$ components of type (2), $m_2$ components of types (3) and (4), and $m_4$ components of type (5). From the above computations, our result follows.
\end{proof}

\begin{remark}
Notice that since $-\id$ belongs to $\ST(C^\xi)^0$, for every connected component $M$ of $\ST(C^\xi)$ there are two possible choices of $\Pi_M$, namely the ones we have used and their opposites. However, different choices lead (as they should) to the same result: this is clear for the case of $\Phi_2$, which, being a quadratic function of the eigenvalues, takes on the same value on a matrix $x$ and on its opposite $-x$. It is also true for $\Phi_1$ and $\Phi_3$, because all the odd-indexed moments vanish, while all the expressions of the form $\Phi_{1}^{2k}, \Phi_{3}^{2k}$ are again even functions of their argument, hence insensitive to our arbitrary choice of sign.
\end{remark}

\begin{remark}
	\label{remark:}
	In Tables~\ref{table:mu-1},~\ref{table:mu-2}, and~\ref{table:mu-3}, we present theoretical and numerical computations of the first few terms of the moment sequences of $\mu_1$, $\mu_2$, and $\mu_3$. The algorithm used to compute the numerical data comes from \cite{harvey2016computing}.

Notice that the curves corresponding to subgroups $4$, $5$, and $6$ in Table \ref{Table4} all have the same $\mu_1$ moments, but not the same Sato\--Tate distribution. Indeed, the $\mu_2$ moments provide a distinction between the curve corresponding to subgroup 4 and the curves corresponding to subgroups $5$, and $6$, where the latter two curves have the same Sato\--Tate distribution by Lemma \ref{lemma:sato-tate-conjugacy-classes}.
\end{remark}

\clearpage
\appendix

\section{Determination of Sato\--Tate group from the first Moment Sequence}
\label{section:appendix-determining-sato-tate-group}
In general, the sequence of Frobenius traces does not determine the Sato\--Tate group. However, in the case where the Sato\--Tate distribution is $3_*\phi$, as defined at the beginning of Section~\ref{section:proving-sato-tate}, we show that this distribution can only arise from the Sato\--Tate group $\SU(2)_3$. In fact we prove something stronger: if the trace distribution of the Sato-Tate group of a 3-dimensional abelian variety $A$ has the same first five moments as $3_*\phi$, then $\ST(A)$ is conjugate to $\SU(2)_3$ in $\USp(6)$.

\begin{prop}
	\label{proposition:appendix-main}
Let $A/K$ be a 3-dimensional abelian variety over some number field $K$. Suppose that the first five moments of the trace distribution $\mu_1$ associated with $\ST(A)$ coincide with the first five moments of $3_*\phi$, that is, we have $\mE_{\mu_1}[x^n] = \mE_{3_*\phi}[x^n]$ for $n=0,\ldots,4$. Then $A$ is $K$-isogenous to the cube of an elliptic curve without CM, and $\ST(A)$ is conjugate to $\SU(2)_3$ inside $\GSp(6)$. 
\end{prop}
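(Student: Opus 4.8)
The plan is to convert the hypothesis on the moments of $\mu_1$ into a statement about the representation theory of $\ST(A)\subseteq\USp(6)$, combine it with the Albert classification of endomorphism algebras of abelian threefolds, and then use the fourth moment to eliminate every possibility except $\SU(2)_3$.

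First I would set $V:=\mC^{6}$, the standard representation of $\ST(A)$, and use that for $g\in\USp(6)$ the trace $\operatorname{tr}(g)$ is real; hence $\mE_{\mu_1}[x^2]=\langle\chi_V,\chi_V\rangle_{\ST(A)}=\dim_{\mC}\End_{\ST(A)}(V)$ and $\mE_{\mu_1}[x^4]=\langle\chi_V\otimes\chi_V,\chi_V\otimes\chi_V\rangle_{\ST(A)}=\dim_{\mC}\End_{\ST(A)}(V^{\otimes2})$. Since $[-1]\in\End(A)$ forces $-\id\in\ST(A)$, the measure $\mu_1$ is symmetric and its odd moments vanish automatically, so the content of the hypothesis is exactly $\dim_{\mC}\End_{\ST(A)}(V)=9$ and $\dim_{\mC}\End_{\ST(A)}(V^{\otimes2})=162$ (using $\mE_{3_*\phi}[x^2]=9$ and $\mE_{3_*\phi}[x^4]=81\cdot2=162$).

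Next I would pass to the identity component. By the standard properties of the algebraic Sato--Tate group, $\ST(A)^0$ is a Zariski-dense maximal compact subgroup of $\AST(A)^0\otimes\mC$, so $\dim_{\mC}\End_{\ST(A)^0}(V)=\dim_{\mQ}\End(A_{\overline K})_{\mQ}$, which is $\ge\dim_{\mC}\End_{\ST(A)}(V)=9$. Running through the Albert classification for $\dim A=3$, the only isogeny types with $\dim_{\mQ}\End(A_{\overline K})_{\mQ}\ge9$ are: $A_{\overline K}\sim E^3$ with $E$ non-CM ($\End_{\mQ}=\Mat_3(\mQ)$, dimension $9$); $A_{\overline K}\sim E_1^2\times E_2$ with $E_1$ CM and $E_2$ non-CM ($\Mat_2(F)\times\mQ$, dimension $9$); $A_{\overline K}\sim E_1^2\times E_2$ with $E_1,E_2$ both CM ($\Mat_2(F_1)\times F_2$, dimension $10$); and $A_{\overline K}\sim E^3$ with $E$ CM ($\Mat_3(F)$, dimension $18$). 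In the first case $\ST(A)^0$ is a conjugate of $\SU(2)_3$; in the other three $\ST(A)^0$ contains a nontrivial torus and $V|_{\ST(A)^0}$ splits off nontrivial characters. For each of those three I would write $V|_{\ST(A)^0}$ explicitly, list the finitely many groups $\ST(A)$ with that identity component that fit inside $\USp(6)$ and still satisfy $\dim_{\mC}\End_{\ST(A)}(V)=9$ (several are already excluded by the second moment alone), and compute $\mE_{\mu_1}[x^4]$ directly: on each connected component the trace has the form $\operatorname{tr}(A)\operatorname{tr}(\Pi)$ or the analogous torus expression, exactly as in Section~\ref{sect:MomentSequences}. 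A short check shows $\mE_{\mu_1}[x^4]\neq162$ in all of them (for instance the connected case $E_1^2\times E_2$ with $E_1$ CM gives $146$, and $E^3$ with CM not defined over $K$ gives $243$). Hence only $\ST(A)^0=\SU(2)_3$ survives, and then $A_{\overline K}\sim E^3$ with $E$ non-CM.

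It then remains to show $\ST(A)=\ST(A)^0$, and to conclude over $K$. Since $\ST(A)$ normalizes $\SU(2)_3$ inside $\USp(6)$, its component group $H$ is conjugate to a subgroup of the group of $3\times3$ signed permutation matrices of determinant $1$, i.e.\ $H\subseteq S_4\subset\GL_3(\mZ)$, acting so that on the component indexed by $\Pi$ one has $\operatorname{tr}(g)=\operatorname{tr}(A)\operatorname{tr}(\Pi)$. Hence $\mE_{\mu_1}[x^2]=\frac{1}{|H|}\sum_{\Pi\in H}\operatorname{tr}(\Pi)^2$; the identity contributes $9$, whereas every non-identity signed permutation matrix of determinant $1$ has trace in $\{-1,0,1\}$, so $\sum_{\Pi\in H}\operatorname{tr}(\Pi)^2\le 9+(|H|-1)$, which equals $9|H|$ only when $|H|=1$. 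Thus $\ST(A)=\SU(2)_3$ up to conjugacy in $\USp(6)$, a fortiori in $\GSp(6)$; since $\ST(A)$ is connected, all endomorphisms of $A_{\overline K}$ are already defined over $K$, so $\End(A_K)_{\mQ}=\Mat_3(\mQ)$ and $A\sim_K E^3$ for an elliptic curve $E/K$ without CM. The main obstacle is the third paragraph: one must be genuinely exhaustive about the finite families of groups $\ST(A)$ attached to each CM-containing identity component --- both the admissible component groups and the requirement that the resulting group still embed in $\USp(6)$ --- before the fourth-moment computation produces the contradiction. (If one is willing to quote a complete classification of Sato--Tate groups of abelian threefolds, this step collapses to checking which entries have $(\mE_{\mu_1}[x^2],\mE_{\mu_1}[x^4])=(9,162)$.)
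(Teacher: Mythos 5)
Your approach is genuinely different from the paper's, and it is worth comparing the two. You translate the second and fourth moments into $\dim_\mC\End_{\ST(A)}(V)=9$ and $\dim_\mC\End_{\ST(A)}(V^{\otimes 2})=162$, then pass to the identity component and invoke the fact (valid in dimension $\leq 3$ via the algebraic Sato--Tate/Mumford--Tate machinery) that $\dim_\mC\End_{\ST(A)^0}(V)=\dim_\mQ\End^0(A_{\overline K})$, and finally run through the Albert classification. The paper instead decomposes $V=\bigoplus W_i^{\oplus n_i}$ into $\ST(A)$-irreducibles, uses $\sum n_i^2=9$ and $\sum n_i\dim W_i=6$ to isolate exactly two shapes, a Lie-algebra argument to force $\mathfrak g=\mathfrak{sl}_2\oplus\mathfrak c$ in both, a real-form argument to kill $\mathfrak c$ in case (1), and in case (2) the elegant bound $\int_G\chi^4=144+\int_G\eta^4\leq 160<162$ to eliminate the CM-containing possibilities in one stroke regardless of the component group. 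Your endgame is actually a bit cleaner than the paper's: where the paper appeals to Table~\ref{table:mu-1} to conclude connectedness, your inequality $\operatorname{tr}(\Pi)^2<9$ for $\Pi\neq\pm\id$ in $O(3)$ gives $|H|=1$ directly; note, however, that your intermediate assertion that $H$ is conjugate into the $\det=1$ signed-permutation group is not automatic (finite subgroups of $SO(3)$ need not embed in the octahedral group), though the argument survives because the strict inequality holds for every nonidentity element of $O(3)$, not just signed permutations.

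The genuine gap, which you flag yourself, is the elimination of the CM-containing Albert types. You give two spot-checks ($146$ for connected $E_1^2\times E_2$ with $E_1$ CM, and $243$ for $E^3$ with CM not defined over the base) but you do not exhaust the finitely many Sato--Tate groups attached to each such identity component, and this exhaustion is not trivial: for $\ST^0=U(1)\times\SU(2)$ one must rule out the $\mZ/2\mZ$-extension (which, as you note, already fails the second moment), for $\ST^0=U(1)\times U(1)$ one must identify the unique component group passing the second-moment test and compute its fourth moment ($147\neq 162$), and for $\ST^0=U(1)_3$ the normalizer in $\USp(6)$ modulo $U(1)_3$ is an infinite group, so the admissible finite component groups are not a priori a short list --- one needs the representation-theoretic constraint (that $\sum n_i^2=9$ forces $V=U^{\oplus 3}$ with $U$ a $2$-dimensional irreducible whose symmetric square cannot be irreducible) to close this case. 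This is precisely what the paper's bound $\int_G\chi^4\leq 160$ does abstractly and uniformly. As you observe, citing the published classification of Sato--Tate groups of abelian threefolds would collapse the whole step, but absent that citation your proposal needs the enumeration to be carried out explicitly before it constitutes a complete proof.
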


The proof of this result occupies the remainder of the section.
Notate $G:=\ST(A)$. Here, $G$ is a (reductive) compact real Lie group, acting naturally on $\mathbb{R}^6 \subseteq \mathbb{C}^6$. Let $W$ be $\mathbb{C}^6$ interpreted as a representation of $G$, and write $W=\bigoplus_{i=1}^k W_i^{\oplus n_i}$ for the decomposition of $W$ into irreducible $G$-modules. Let $\chi=\sum_{i=1}^k n_i \chi_i$ be the character of this representation $W$. Observe that $\chi$ is real, and hence the assumption on the moments sequence implies 
\[
\langle \chi, \chi \rangle = \int_G \operatorname{tr}(g)^2 dg = \mathbb{E}_{\mu_1}[x^2]= 9 \Rightarrow \sum_{i=1}^k n_i^2 = 9,
\]
using $\mathbb{E}_{3_*\phi}[x^2]=9$ (see Remark \ref{rmk:MomentsPhi}).
On the other hand, we have that $\sum n_i \dim W_i = 6$, which leaves precisely two possibilities:
\begin{enumerate}
\item either $k=1$, $n_1=3$ and $\dim W_1=2$,
\item or $k=3$, $n_1=n_2=2$, $n_3=1$, $\dim W_1=\dim W_2=1$ and $\dim W_3=2$.
\end{enumerate}
Let $\mathfrak{g}$ be the Lie algebra of $G_\mathbb{C}$, and write $\mathfrak{g}=\bigoplus_{j=1}^m \mathfrak{g}_j \oplus \mathfrak{c}$ with every $\mathfrak{g}_i$ simple and $\mathfrak{c}$ abelian. Each representation $W_i$ can be written as $W_{i1} \boxtimes \cdots \boxtimes W_{im} \boxtimes \psi_i$, where $W_{ij}$ is an irreducible representation of $\mathfrak{g}_j$ and $\psi_i$ is a character of $\mathfrak{c}$. Notice that $W$ is a faithful representation of $G$, so for each $j=1,\ldots,m$ there exists an $i$ such that $W_i$ is a faithful representation of $\mathfrak{g}_j$. In particular, $W_{ij}$ is a faithful representation of $\mathfrak{g}_j$, and since $\dim W_{ij} \leq 2$ we obtain that $\mathfrak{g}_j$ has a faithful representation of dimension at most 2. This is only possible if $\mathfrak{g}_j$ is $\mathfrak{sl}_2$ for all $j$, because no other simple Lie algebra admits an irreducible 2-dimensional faithful representation.

In both cases $(1)$ and $(2)$, up to isomorphism (of $G$-modules) there is only one irreducible submodule $W_i$ of dimension 2, which implies that $m$ (the number of copies of $\mathfrak{sl}_2$ appearing in $\mathfrak{g}$) is at most 1. Also observe that $m$ cannot be zero, because an abelian Lie algebra does not admit any irreducible 2-dimensional faithful representation over $\mathbb{C}$. We can therefore write $\mathfrak{g}=\mathfrak{sl}_2 \oplus \mathfrak{c}$ and $W_i = W_{i1} \boxtimes \psi_i$, where $W_{i1}$ is either trivial or isomorphic to the standard representation of $\mathfrak{sl}_2$ and $\psi_i$ is a character of $\mathfrak{c}$.
To complete the proof, we will now verify
Proposition~\ref{proposition:appendix-main} holds
in both cases $(1)$ and $(2)$.

\begin{enumerate}
\item Suppose that we are in case (1) above. 
We claim that $\mathfrak{c}$ is trivial. To show this, we restrict our attention to a maximal compact Lie subalgebra of $\mathfrak{g}$, which is necessarily of the form $\mathfrak{su}_2 \oplus \mathfrak{c}_\mathbb{R}$. By faithfulness, there exists $i$ such that $\psi_i$ is nontrivial; as $W$ comes from a real representation and $W_{i1} \boxtimes \psi_i \subseteq W$ we also have $\overline{W_{i1}} \boxtimes \overline{\psi_i} \subseteq W$. We have $\overline{W_{i1}}=W_{i1}$ both for the trivial and the standard representation of $\mathfrak{su}_2$, so $W_{i1} \boxtimes \overline{\psi_i} \subseteq W$. On the other hand we have $\overline{\psi}_i \neq \psi_i$: indeed, $\psi_i$ is obtained (by differentiation and extension of scalars) from a real representation $\rho_i$ of the maximal central torus $\mathbb{T}$ of $\ST(A)$. As $\ST(A)$ is compact, $\mathbb{T}$ is a product of copies of $\mathbb{S}^1$, hence its representation $\rho_i$ satisfies $\overline{\rho_i}=\rho_i^{-1}$, because all the eigenvalues of $\rho_i(x)$ lie on the unit circle, for all $x \in \mathbb{T}$. Extending scalars and differentiating we find $\overline{\psi_i}=-\psi_i$, so in particular $\overline{\psi_i} \neq \psi_i$ since $\psi_i$ is nontrivial. This immediately leads to a contradiction, because on the one hand we have $W_{i1} \boxtimes \overline{\psi_i} \not \cong W_{i1} \boxtimes \psi_i$ and on the other all irreducible submodules of $W$ are isomorphic. Hence $\mathfrak{c}=(0)$. Since $\mathfrak{c}=(0)$, we obtain that $G^0$ is abstractly isomorphic to $\SU(2)$, which in turn implies that the Hodge group of $A$ is (abstractly isomorphic to) $\SL(2)$. 

To complete this case, we will show that in fact $\ST(A)$
is not only isomorphic to $\SU(2)$, but in fact
is conjugate to $\SU(2)_3$ inside $\GSp(6)$.
Write $A_{\overline{k}}$ (up to isogeny) as a product of simple abelian varieties, $A_{\overline{K}} \sim \prod_{i=1}^s A_i^{t_i}$. If $s=1$, this leads immediately to a contradiction, because no absolutely simple abelian threefold has Hodge group isomorphic to $\operatorname{SL}(2)$. If instead $A$ is nonsimple, all its absolutely simple factors are of dimension at most 2. By \cite[Corollary~1.2]{lombardo2014ell}, we have
\[
\SL(2) \cong \operatorname{Hg}(A) \cong \prod_{i=1}^s \operatorname{Hg}(A_i),
\]
where each factor on the right is nontrivial (because no abelian variety has trivial Hodge group). This clearly implies $s=1$, $\operatorname{Hg}(A_1)=\SL(2)$, and $t_1 \dim A_1=3$: it is immediate to check that this is only possible if $A_1$ is an elliptic curve without CM and $t_1=3$. In turn, this means that $A$ is \textit{geometrically} isogenous to the cube of a non-CM elliptic curve, so that $\ST(A)^0=\SU(2)_3$ (up to conjugacy in $\GSp(6)$). Finally, Table~\ref{table:mu-1} shows that for a Sato\--Tate group with this identity component we can have $\mathbb{E}_{\mu_1}[x^2]=9$ only when $\ST(A)$ is connected. That is, if and only if $\ST(A)=\ST(A)^0=\SU(2)_3$ as claimed. Notice that the argument for this case only involves the first three moments of $\mu_1$, and also shows that the equality $\ST(A)=\SU(2)_3$ is equivalent to $A$ being $K$-isogenous to the cube of an elliptic curve without CM.
\item In case (2), the same argument as above leads to the conclusion that $\overline{W_3}=W_3$, so that $W_1=\overline{W_2}$ is given by a character $\psi$ of $\mathfrak{c}$, while $\mathfrak{c}$ acts trivially on $W_3$ (which otherwise would not be self-conjugate).  If $\mathfrak{c}$ is trivial, the same argument as in the previous case leads to the conclusion that $A$ is $K$-isogenous to the cube of an elliptic curve. This is absurd under the hypotheses of case (2), because $W$ would then be isomorphic to the direct sum of three identical copies of the same representation, which is incompatible with $W_1 \not \cong W_3$. To conclude the proof, we will assume $\mathfrak{c}$ is nontrivial and derive a contradiction with the assumption $\mathbb{E}_{\mu_1}[x^4]=162$. Notice that there exist abelian threefolds for which we have $W_1 = \overline{W_2}$, $\overline{W_3}=W_3$, and $\mathfrak{c}$ nontrivial: an example is given by $A = E_1^2 \times E_2$, where $E_1$ is an elliptic curve admitting CM over $K$ and $E_2$ is an elliptic curve without potential CM. As we now show, however, the Sato-Tate group of any such threefold does not satisfy $\mathbb{E}_{\mu_1}[x^4]=162$.

	We write the character $\chi$ of $W$ as $2\psi + 2\overline{\psi} + \eta$, where $\psi$ is a group morphism $G \to \mathbb{C}^\times$ and $\eta$ is the character of an irreducible, 2-dimensional, real representation of $G$. 
\begin{lemma}\label{lem:int}
The integrals $\int_G \psi^n \eta^k dg$ and $\int_G \overline{\psi}^n \eta^k dg$ vanish for all $n>0$ and $k \geq 0$.
\end{lemma}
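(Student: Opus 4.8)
The idea is to play off the nontrivial central torus of $G = \ST(A)$ against the translation-invariance of Haar measure. Recall from the analysis preceding the lemma that, under the standing hypotheses of case $(2)$ (in particular $\mathfrak{c} \neq 0$), the $2$-dimensional summand $W_3$ with character $\eta$ is exactly the one on which $\mathfrak{c}$ acts trivially, while $\psi$ is the character of the $1$-dimensional summand $W_1$. Let $\mathbb{T}$ denote the maximal central torus of $G$, a product of copies of $\mathbb{S}^1$, whose (complexified) Lie algebra is $\mathfrak{c}$. Since $\mathfrak{c}$ acts trivially on $W_3$ and $\mathbb{T}$ is connected, $\mathbb{T}$ acts trivially on $W_3$; and since $W = W_1^{\oplus 2}\oplus \overline{W_1}^{\oplus 2}\oplus W_3$ is faithful while $\mathbb{T}$ acts on the $W_3$-part trivially, the restriction $\psi|_{\mathbb{T}}$ must be injective, hence a nontrivial character of $\mathbb{T}$. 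These two facts are all that the argument needs.

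First I would record the behaviour of the integrand under left translation by $\mathbb{T}$. For $t \in \mathbb{T}$ and $g \in G$, triviality of the $\mathbb{T}$-action on $W_3$ gives $\eta(tg) = \operatorname{tr}(tg \mid W_3) = \operatorname{tr}(g \mid W_3) = \eta(g)$, so $\eta$ is constant on each coset $\mathbb{T}g$; and since $\psi$ is a group homomorphism, $\psi(tg) = \psi(t)\psi(g)$. Hence, writing $F := \psi^n \eta^k$, we obtain $F(tg) = \psi(t)^n F(g)$ for every $t \in \mathbb{T}$.

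Next I would integrate. By left-invariance of the Haar measure, for any fixed $t \in \mathbb{T}$,
\[
\int_G F(g)\,dg \;=\; \int_G F(tg)\,dg \;=\; \psi(t)^n \int_G F(g)\,dg.
\]
Because $\psi|_{\mathbb{T}}$ is a nontrivial continuous character of the torus $\mathbb{T}$, its image is a nontrivial closed connected subgroup of $\mathbb{S}^1$, hence all of $\mathbb{S}^1$; as $n > 0$ we may therefore choose $t \in \mathbb{T}$ with $\psi(t)^n \neq 1$ (for instance with $\psi(t)$ a primitive $(n+1)$-st root of unity), and the displayed identity forces $\int_G F\,dg = 0$. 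The same computation with $\overline{\psi} = \psi^{-1}$ in place of $\psi$ (still nontrivial on $\mathbb{T}$) gives $\int_G \overline{\psi}^n \eta^k\,dg = 0$.

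There is no real obstacle here; the only things that require attention are the two structural inputs above, namely that $\mathbb{T}$ acts trivially on $W_3$ and that $\psi|_{\mathbb{T}}$ is nontrivial, both of which were already established in setting up case $(2)$. If one prefers a character-theoretic phrasing, the identity $\int_G \psi^n \eta^k\,dg = \langle \eta^k,\overline{\psi}^n\rangle_G$ exhibits the integral as the multiplicity of $\overline{\psi}^n$ among the $G$-irreducible constituents of $W_3^{\otimes k}$; each such constituent restricts to the trivial representation of $\mathbb{T}$, whereas $\overline{\psi}^n|_{\mathbb{T}}$ is nontrivial for $n>0$, so the multiplicity is $0$, and likewise with $\psi$ and $\overline\psi$ interchanged.
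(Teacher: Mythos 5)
Your proof is correct and takes a genuinely different route from the paper's. The paper works at the Lie-algebra level: it writes $\eta = \operatorname{Std} \boxtimes 1$ on $\mathfrak{g}_{\mathbb R} = \mathfrak{su}_2 \oplus \mathfrak{c}_{\mathbb R}$, observes that every $1$-dimensional constituent of $\eta^k = \operatorname{Std}^{\otimes k} \boxtimes 1$ must be the trivial representation, and concludes that $\psi^n$ (being nontrivial on $\mathfrak{c}$) cannot appear, so $\langle \psi^n, \eta^k \rangle = 0$. You instead exploit translation-invariance of the Haar measure: since $\mathbb{T}$ acts trivially on $W_3$, the factor $\eta$ is constant on $\mathbb{T}$-cosets, so $F = \psi^n\eta^k$ satisfies $F(tg) = \psi(t)^n F(g)$, and picking $t \in \mathbb{T}$ with $\psi(t)^n \neq 1$ (possible because $\psi|_{\mathbb{T}}$ is a nontrivial, hence surjective, character of the torus) forces $\int_G F\, dg = 0$. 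Both proofs hinge on the same two structural facts — $\mathfrak{c}$ (equivalently $\mathbb{T}$) acts trivially on $W_3$ and nontrivially via $\psi$ — but your orthogonality-by-translation mechanism avoids the constituent analysis entirely and only needs $\eta$ to be left-$\mathbb{T}$-invariant, not that it arises from a specific Lie-algebra-theoretic tensor factorization. Your closing character-theoretic rephrasing is essentially the paper's argument, so you've in fact produced both. One minor caution: your claim that $\psi|_{\mathbb{T}}$ is \emph{injective} is slightly stronger than needed and relies on $W$ being faithful on $\mathbb{T}$ combined with $\overline{\psi}(t) = \overline{\psi(t)}$; nontriviality is all the argument uses and is the more directly available statement.
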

\begin{proof}
Notice that the statements for $\psi$ and $\overline{\psi}$ are clearly equivalent, so we only consider the former case. For the sake of simplicity, we identify representations with their characters; in particular, if $\omega$ is the character of a representation $V$, then $\omega^k$ is identified with $V^{\otimes k}$.

Recall that $\eta$ is real, so the number $\int_G \psi^n \eta^k dg = \langle \psi^n, \eta^k \rangle$ is the multiplicity of the 1-dimensional representation $\psi^n$ as a constituent of the representation $\eta^{k}$. We want to prove that this number is zero, so it suffices to show that $\psi^n$ is not isomorphic to a $G^0$-submodule of $\eta^k$ (a fortiori, this implies that there is no $G$-equivariant, nonzero map from $\psi^n$ to $\eta^{k}$). It also suffices to show that $\psi^n$ is not a subrepresentation of $\eta^k$ for the action of the (real) Lie algebra $\mathfrak{g}_{\mathbb{R}}$ of $G^0$. We have already shown that $\mathfrak{g}_{\mathbb{R}}=\mathfrak{su}_2 \oplus \mathfrak{c}_{\mathbb{R}}$, and that $\eta=\operatorname{Std} \boxtimes 1$, where $\operatorname{Std}$ is the standard 2-dimensional representation of $\mathfrak{su}_2$ and $1$ denotes the trivial representation of $\mathfrak{c}$. It follows easily that for $k \geq 0$ the only possible 1-dimensional constituents of $\eta^k=(\operatorname{Std})^{\otimes k} \boxtimes 1$ are copies of the trivial representation of $\mathfrak{g}_\mathbb{R}$, which, in particular, are not isomorphic to the non-trivial 1-dimensional representation $\psi^n$. It follows that $\langle \psi^n, \eta^k \rangle$ vanishes, as we wanted to show.
\end{proof}

We now compute $\int_G \chi^4 dg$ in two different ways. On the one hand, this is the fourth moment of the Sato\--Tate distribution of traces associated with $G$, so it is equal to $\mathbb{E}_{3_*\phi}[x^4]=162$ (see \autoref{rmk:MomentsPhi})
by assumption. On the other hand, using Lemma \ref{lem:int} and the equality $\psi\overline{\psi}=1$, we find
\[
\begin{aligned}
\int_G \chi^4 dg & = \int_G \left(48 \eta ^2 \psi  \overline{\psi }+96 \psi ^2 \overline{\psi }^2 +\eta ^4\right) dg \\
& = \int_G \left(48 \eta ^2 +96 +\eta ^4\right) dg \\
& = 96 + 48\langle \eta, \eta \rangle + \int_G \eta^4 dg \\
& = 144 + \int_G \eta^4 dg.
\end{aligned}
\]
Observe now that $\eta$ is a $2$-dimensional representation, so $|\eta(g)| \leq 2$ for all $g \in G$. It follows that $|\int_G \eta^4 dg| \leq \int_G 2^4 dg = 16$, whence $162=\int_G \chi^4 dg \leq 144+16=160$. This is a contradiction, completing the
proof of Proposition~\ref{proposition:appendix-main}.
\end{enumerate}

\clearpage
\section{Tables}
\label{section:tables}

{\renewcommand{\arraystretch}{1.3}%
\begin{table}[h!]
	\centering
	\begin{tabular}{|c|ccccc|}
		\hline
		Subgroup & $M_2$ & $M_4$ & $M_6$ & $M_8$ & $M_{10}$ \\ \hline \hline
		1 (theoretical) & 9& 162& 3645& 91854& 2480058\\
		1 (numerical) & 8.999& 161.988& 3644.890& 91854.573& 2480122.102 \\ \hline		
		2 (theoretical) & 5& 82& 1825& 45934& 1240050\\
		2 (numerical) &4.999 & 81.973& 1824.235& 45910.323& 1239239.907\\ \hline		
		3 (theoretical) & 5& 82& 1825& 45934& 1240050\\
		3 (numerical) &4.999 &81.973 &1824.235 & 45910.326 & 1239239.988 \\ \hline		
		4 (theoretical) &3 &42 &916 &22974 &620046 \\
		4 (numerical) & 2.999& 41.972& 914.184& 22949.933& 619320.101 \\ \hline			
		5 (theoretical) &3 &42 &916 &22974 &620046 \\
		5 (numerical) & 2.999& 41.973& 914.37& 22958.227& 619599.349\\ \hline		
		6 (theoretical) &3 &42 &916 &22974 &620046 \\
		6 (numerical) & 2.999& 41.973& 914.373& 22958.228& 619599.369\\ \hline		
		7 (theoretical) & 2& 22& 460& 11494& 310044\\
		7 (numerical) & 1.999& 21.978& 459.311& 11471.005& 309269.022\\ \hline		
		8 (theoretical) &3 & 54 & 1215& 30618& 826686\\
		8 (numerical) & 2.999& 53.972& 1214.282& 30600.607& 826296.848\\ \hline		
		9 (theoretical) &2 &28 &610 &153316 &413364 \\
		9 (numerical) & 1.999& 27.987& 609.764& 15311.966& 413302.154\\ \hline		
		10 (theoretical) &1 &14 &305 &7658 &206682 \\
		10 (numerical) &1.000 & 13.991& 304.674& 7647.112& 206330.548\\ \hline		
		11 (theoretical) &1 &8 &155 &3836 &103362 \\
		11 (numerical) & 1.000& 7.983& 154.585& 3825.122& 103074.236\\ \hline	
	\end{tabular}
	\caption{Table of $\mu_1$ moment sequences}
	\label{table:mu-1}
\end{table}}

{\renewcommand{\arraystretch}{1.3}%
\begin{table}[h!]
	\centering
	\begin{tabular}{|c|ccccc|}
		\hline
		Subgroup & $M_1$ & $M_2$ & $M_3$ & $M_4$ & $M_{5}$ \\ \hline \hline
		1 (theoretical) &6& 45& 405& 4131& 45684 \\
		1 (numerical) & 6.000& 44.997& 404.978& 4130.861& 45683.553\\ \hline		
		2 (theoretical) &4& 25& 209& 2083& 22890 \\
		2 (numerical) & 4.000& 24.995& 208.936& 2082.168& 22878.772\\ \hline		
		3 (theoretical) &4& 25& 209& 2083& 22890 \\
		3 (numerical) & 4.000& 24.995& 208.936& 2082.168& 22878.773\\ \hline		
		4 (theoretical) &2& 13 & 105 &1043&  11448 \\
		4 (numerical) & 2.000& 12.995& 104.944& 1042.139& 11436.874\\ \hline		
		5 (theoretical) &3& 15& 111& 1059& 11493 \\
		5 (numerical) & 3.000& 14.995& 110.941& 1058.329& 11485.207\\ \hline		
		6 (theoretical) &3& 15& 111& 1059& 11493 \\
		6 (numerical) & 3.000& 14.995& 110.941& 1058.329& 11485.208\\ \hline		
		7 (theoretical) &2& 9& 59& 539& 5772 \\
		7 (numerical) & 2.000& 8.996& 58.945& 538.250& 5761.490\\ \hline		
		8 (theoretical) &2& 15& 135& 1377& 15228 \\
		8 (numerical) & 2.000& 14.995& 134.937& 1376.264& 15219.817\\ \hline		
		9 (theoretical) &2& 10& 74& 706& 7662 \\
		9 (numerical) & 2.000& 9.998& 73.974& 705.749& 7659.752\\ \hline		
		10 (theoretical) &1& 5& 37& 353& 3831 \\
		10 (numerical) & 1.000& 4.999& 36.977& 352.657& 3826.140\\ \hline		
		11 (theoretical) &1& 4& 22& 186& 1941 \\
		11 (numerical) & 1.000& 3.997& 21.961& 185.552& 1935.804\\ \hline		
	\end{tabular}
	\caption{Table of $\mu_2$ moment sequences}
	\label{table:mu-2}
\end{table}}

{\renewcommand{\arraystretch}{1.3}%
\begin{table}[h!]
	\centering
	\begin{tabular}{|c|cccc|}
		\hline
		Subgroup & $M_2$ & $M_4$ & $M_6$ & $M_8$  \\ \hline \hline
		1 (theoretical) &65& 11076& 2561186& 685324780 \\
		1 (numerical) & 64.995& 11075.733& 2561214.275& 685313387.267 \\ \hline		
		2 (theoretical) & 33& 5540& 1280610& 342662572\\
		2 (numerical) & 32.991 & 5537.488 & 1279721.846& 342293549.829 \\ \hline			
		3 (theoretical) & 33& 5540& 1280610& 342662572\\
		3 (numerical) & 32.991 & 5537.488 & 1279721.927& 342293572.882   \\ \hline		
		4 (theoretical) & 17& 2772& 640322& 171331468\\
		4 (numerical) & 16.991&2769.460 & 639567.852 & 1171083665.383\\ \hline		
		5 (theoretical)  &17& 2772& 640322& 171331468\\
		5 (numerical) & 16.991& 2770.097& 639840.117& 171172626.176  \\ \hline		
		6 (theoretical) & 17& 2772& 640322& 171331468\\
		6 (numerical) & 16.991& 2770.097& 639840.145& 171172628.271 \\ \hline		
		7 (theoretical) & 9& 1388& 320178& 85665916\\
		7 (numerical) & 8.993 & 1385.715 & 319374.334 & 8537815.326 \\ \hline		
		8 (theoretical) & 23& 3696& 853742& 228441640\\
		8 (numerical) & 22.991& 3693.961& 853369.706& 228400307.404 \\ \hline		
		9 (theoretical)  &12& 1850& 426888& 114221002\\
		9 (numerical) & 11.996& 1849.356& 42681.689& 114211031.306 \\ \hline		
		10 (theoretical)  &7& 928& 213454& 57110536 \\
		10 (numerical) & 6.997& 926.945& 213098.809& 56995860.860\\ \hline		
		11 (theoretical)  & 4& 466& 106744& 28555450\\
		11 (numerical) & 3.994& 464.747& 106461.595& 28497167.689\\ \hline		
	\end{tabular}
	\caption{Table of $\mu_3$ moment sequences}
	\label{table:mu-3}
\end{table}}

\hvFloat[%
     floatPos=!htb,
     capWidth=h,
     capPos=r,
     objectAngle=90,
     capAngle=90,
     objectPos=l 
]{table}{%
{\renewcommand{\arraystretch}{1.8}
\begin{tabular}
{|c|c|c|c|c|c|}
\hline
{} & Group & Generators & Equation of twist & ${\setstretch{.5}\begin{array}{c}
\text{Minimal field} \\ \text{trivializing the twist}
\end{array}}$ & ${\setstretch{.5}\begin{array}{c}
\text{Normalized} \\ \text{trace distributions}
\end{array}}$ \\ \hline \hline
1 & Trivial & $\{\id\}$ & $ \begin{cases}
x^2+y^2+z^2=0 \\
-2t^2=x^4+y^4+z^4
\end{cases}$ & $\mQ$ & $3_*\phi$\\ \hline
2 & $\mathbb{Z}/2\mathbb{Z}$ & $(1,2)$ &  $y^2 = x^8 - 56x^4 + 16$  & $\mQ(i)$  &  $\frac{1}{2} (\phi) + \frac{1}{2} (3_*\phi)$\\ 
3 & $\mathbb{Z}/2\mathbb{Z}$ & $(1,4)(2,3)$ & $y^2 = -x^8 - 14x^4 - 1$  & $\mQ(i)$  & $\frac{1}{2} (\phi) + \frac{1}{2}(3_*\phi)$ \\ \hline
4 & $\mathbb{Z}/4\mathbb{Z}$ & $(1,2,3,4)$ & ${\scriptscriptstyle y^2 = -x^8 - x^7 - 7x^6 + 7x^5 - 7x^3 -7x^2 + x - 1}$ & $\mQ(\zeta_5)$ & $\frac{3}{4} (\phi) + \frac{1}{4}(3_*\phi)$\\ \hline
5 & $\left( \mathbb{Z}/2\mathbb{Z} \right)^2$ & $(1,2)(3,4)$,$(1,3)(2,4)$ & $y^2=-x^8-56x^4-16$ & $\mQ(\zeta_8)$ & $\frac{3}{4} (\phi) + \frac{1}{4}(3_*\phi)$ \\ 
6 & $\left( \mathbb{Z}/2\mathbb{Z} \right)^2$ & $(1,2)$,$(3,4)$ & $y^2=x^8-14x^4+1$ & $\mQ(\zeta_8)$ & $\frac{3}{4} (\phi) + \frac{1}{4}(3_*\phi)$\\ \hline
7 & $D_4$ & $(1,2,3,4)$,$(1,3)$ & $ 
\begin{aligned}
{\scriptscriptstyle y^2 = } & {\scriptscriptstyle x^8 + 2x^7 - 14x^6 + 14x^5 - } \\ & {\scriptscriptstyle 14x^4 + 14x^3 - 14x^2 + 2x + 1}
\end{aligned}$ & $\mQ(i,\sqrt[4]{3})$ & $\frac{7}{8} (\phi) + \frac{1}{8}(3_*\phi)$\\ \hline
8 & $\mathbb{Z}/3\mathbb{Z}$ & $(1,2,3)$ & $\begin{cases} 
{\scriptscriptstyle x^2+y^2+1=0 }\\
\begin{aligned}
{\scriptscriptstyle 6t^2 = }& {\scriptscriptstyle -23 (x^4+y^4+1) + 16 x - 12 x^2 - 20 x^3  }\\ & {\scriptscriptstyle + 20 y - 12 x y - 12 x^2 y - 16 x^3 y - 12 y^2 } \\ & {\scriptscriptstyle+ 12 x y^2 - 12 x^2 y^2 - 16 y^3 + 20 x y^3}
\end{aligned}
\end{cases}$ & $\mQ(\zeta_9)^+$ & $\frac{1}{3}(3_*\phi) + \frac{2}{3}(\delta_0)$\\ \hline
9 & $S_3$ & $(1,2,3)$, $(1,2)$ & $y^2 = -6x^7 + 21x^4 + 12x$ & $\mQ(\zeta_3,\sqrt[3]{2})$ & ${\scriptscriptstyle\frac{1}{6}(3_* \phi) + \frac{3}{6}(\phi) + \frac{2}{6}(\delta_0)}$\\ \hline
10 & $A_4$ & $(1,2,3)$, $(1,2)(3,4)$ & $\begin{aligned} {\scriptscriptstyle y^2 = }& {\scriptscriptstyle  -x^8 + 4x^7 - 28x^6 + 28x^5 + 14x^4 }\\ & {\scriptscriptstyle +
    28x^3 - 196x^2 + 100x - 61}\end{aligned}$ & ${\setstretch{.6}\begin{array}{c}
\text{Splitting field of} \\ x^4 + 2x^3 + 6x^2 + 6x + 3
\end{array}}$   & ${\scriptscriptstyle\frac{1}{12}(3_* \phi) + \frac{3}{12}(\phi) + \frac{8}{12}(\delta_0)}$\\ \hline
11 & $S_4$ & $(1,2,3,4)$,$(1,2)$ & $\begin{aligned} {\scriptscriptstyle y^2 = }& {\scriptscriptstyle x^8 - 14x^7 + 84x^6 - 294x^5 + 651x^4 } \\ & {\scriptscriptstyle -
    882x^3 + 630x^2 - 126x - 54} \end{aligned}$ & ${\setstretch{.6}\begin{array}{c}
\text{Splitting field of} \\ x^4 - 6x^2 + 2x + 6
\end{array}}$  & ${\scriptscriptstyle\frac{1}{24} (3_*\phi) + \frac{15}{24}(\phi) + \frac{8}{24}(\delta_0)}$ \\ \hline
\end{tabular}}
}%
{Explicit twists realizing all possible Sato-Tate groups}
{Table4}

\clearpage

\bibliographystyle{amsalpha}
\def\bibfont{\small}
\nocite{sutherland2016sato}
\bibliography{AWS.bib} 

\end{document}